\colorlet{darkblue}{blue!50!black}
\newtheorem{theorem}{Theorem}[section]
\newtheorem{lemma}[theorem]{Lemma}
\newtheorem{proposition}[theorem]{Proposition}
\newtheorem{corollary}[theorem]{Corollary}
\newtheorem{definition}[theorem]{Definition}
\newtheorem{remark}[theorem]{Remark}
\newtheorem{hypothesis}[theorem]{Hypothesis}
\let\originalleft\left
\let\originalright\right
\renewcommand{\left}{\mathopen{}\mathclose\bgroup\originalleft}
\renewcommand{\right}{\aftergroup\egroup\originalright}
\newcommand{\Tr}{\mathop{\mathrm{Tr}}}
\renewcommand{\d}{\/\mathrm{d}\/}
\def\w{\textbf{W}^{\varepsilon}_{{\theta}^{\varepsilon}}}
\def\e{\varepsilon}
\def\t{t\wedge\tau_N^n}
\def\s{t\wedge\tau_N}
\def\T{T\wedge\tau_N^n}
\def\L{\mathbb{L}}
\def\A{\mathrm{A}}
\def\C{\mathrm{C}}
\def\f{\boldsymbol{f}}
\def\B{\mathrm{B}}
\def\D{\mathrm{D}}
\def\y{\boldsymbol{y}}
\def\E{\mathbb{E}}
\def\X{\mathbb{X}}
\def\g{\boldsymbol{g}}
\def\h{\boldsymbol{h}}
\def\z{\mathfrak{z}}
\def\u{\boldsymbol{X}}
\def\v{\boldsymbol{Y}}
\def\w{\boldsymbol{Z}}
\def\W{\mathrm{W}}
\def\Q{\mathrm{Q}}
\def\N{\mathbb{N}}
\def\V{\mathbb{V}}
\def\wi{\widetilde}
\def\Q{\mathrm{Q}}
\def\H{\mathbb{H}}
\def\n{\boldsymbol{n}}
\newcommand{\R}{\mathbb{R}}
\renewcommand{\d}{\/\mathrm{d}\/}
\newcommand{\Addresses}{{
		\footnote{
\noindent \textsuperscript{\textdagger}Center for Mathematics and Applications (NOVA Math), NOVA School of Science and Technology (NOVA FCT), Caparica,	Portugal.\par\nopagebreak
\noindent \textsuperscript{\textparagraph}Center for Mathematics and Applications (NOVA Math) and Department of Mathematics, NOVA School of Science and Technology (NOVA FCT),  Caparica, Portugal.\par\nopagebreak
			\noindent 
            \textsuperscript{\textdaggerdbl}Department of Mathematics, Indian Institute of Technology Roorkee-IIT Roorkee, Haridwar Highway, Roorkee, Uttarakhand 247667, INDIA.\par\nopagebreak
			\noindent  
            \textit{e-mail:} \texttt{kushkinra@gmail.com, k.kinra@fct.unl.pt, cipriano@fct.unl.pt, maniltmohan@ma.iitr.ac.in, maniltmohan@gmail.com.}

			\noindent \textsuperscript{*}Corresponding author.

			\textit{Key words:} Stochastic convective Brinkman-Forchheimer equations, martingale solution, invariant measures, ergodicity, general domains. 
			
			Mathematics Subject Classification (2020): Primary 60H15; Secondary 35R60, 35Q30, 76D05.

}}}
\begin{document}
	
	
	\title[Martingale solution, invariant measure and ergodicity for SCBFE\MakeLowercase{s}]{Martingale solution, invariant measure and ergodicity for stochastic convective Brinkman-Forchheimer equations on general domains in $\mathbb{R}^d$
		\Addresses}
	\author[Kinra, Cipriano and Mohan]{Kush Kinra$^{\text{\textdagger},*}$, Fernanda Cipriano$^{\text{\textparagraph}}$ and Manil T. Mohan$^{\text{\textdaggerdbl}}$}

	\maketitle

\begin{abstract}
The convective Brinkman-Forchheimer  equations (CBFEs)
\[
\frac{\partial \u}{\partial t}
 - \mu \Delta\u
 + (\u\cdot\nabla)\u
 + \alpha\u
 + \beta|\u|^{r-1}\u
 + \nabla p = \mathbf{F}, 
 \qquad \nabla\cdot\u=0,
\]
with parameters $\mu,\alpha,\beta>0$ and $r\in[1,\infty)$ describe incompressible fluid motion in saturated porous media. 
In the stochastic setting, for $d=2,3$ and $r\in[3,\infty)$ (with $2\beta\mu\geq 1$ when $r=3$), strong pathwise solutions on general domains are already known, hence weak martingale solutions exist as well. In the same parameter regime, invariant probability measures on bounded domains have also been obtained. The present work complements and significantly extends these results.
More precisely, on general domains in $\mathbb{R}^d$ (bounded or unbounded), for all $d\in\{2,3\}$, we prove the existence of a weak martingale solution to the stochastic CBFEs  for every exponent $r\in[1,\infty)$, which includes the regimes where no strong solution theory is available. In this more general framework,  the monotonicity technique does not apply, so we follow a different methodology, which relies on a Faedo-Galerkin approximation, tightness arguments, stochastic compactness via Jakubowski's generalization of the Skorokhod representation theorem for nonmetric spaces, and the martingale representation theorem. For $d=2$, $r\in[1,\infty)$, and for $d=3$, $r\in[3,\infty)$, we further show that the martingale solutions satisfy the energy equality (It\^o's formula) and possess $\mathbb{H}$-valued continuous trajectories almost surely. 
In this regularity regime (excluding $2\beta\mu < 1$ when $r=3$), we establish pathwise uniqueness and thereby, via the Yamada-Watanabe argument, obtain the existence of strong solutions and uniqueness in law, thereby recovering, in particular,  the known results. Finally, for $d=2$, $r\in[1,\infty)$, and for $d=3$, $r\in[3,\infty)$ (with $2\beta\mu\geq 1$ when $r=3$), we prove the existence of an invariant probability measure for the associated Markov semigroup, while for $d=2,3$ with $r\in[3,\infty)$ (and with $2\beta\mu\geq 1$ for $r=3$), we show that at most one invariant measure can exist. 
Our results broaden the stochastic well-posedness and ergodic theory of the stochastic CBFEs beyond previously known parameter ranges and beyond bounded spatial domains.
\end{abstract}

	\section{Introduction}\label{sec1}\setcounter{equation}{0}
\subsection{The underlying system}	The convective Brinkman-Forchheimer equations (CBFEs) in two- and three-dimensional smooth domains (bounded or unbounded) is considered in this work. The CBFEs describe the motion of incompressible fluid flows in a saturated porous medium. This model  is recognized to be more accurate when the flow velocity is too large for Darcy's law to be valid alone, and in addition, the porosity is not too small, so that  the term \emph{non-Darcy models} is used in the literature  for these types of fluid flow models (see \cite{Gautam+Kinra+Mohan_AMOP,PAM} for a discussion). Let {$\mathcal{O}\subseteq\R^d$ ($d=2,3$}) be a general domain with smooth boundary $\partial\mathcal{O}$. Let $\u(x,t) \in \R^d$ represent the velocity field at time $t$ and position $x$, $p(t,x)\in\R$ denote the pressure field, $\f(x,t)\in\R^d$ stand for an external forcing. The deterministic CBFEs or incompressible Navier-Stokes equations with damping are given by (see \cite{KT2} for Brinkman-Forchheimer equations with fast growing nonlinearities)
	\begin{equation}\label{1}
		\left\{
		\begin{aligned}
			\frac{\partial \u}{\partial t}-\mu \Delta\u+(\u\cdot\nabla)\u+\alpha\u+\beta|\u|^{r-1}\u+\nabla p&= \mathbf{F}, && \text{ in } \ \mathcal{O}\times(0,T), \\ \nabla\cdot\u&=0, && \text{ in } \ \mathcal{O}\times[0,T], \\
			\u&=\mathbf{0}, && \text{ on } \ \partial\mathcal{O}\times[0,T], \\
			\u(0)&=\u_0 && \text{ in } \ \mathcal{O}.
		\end{aligned}
		\right.
	\end{equation}
    To ensure the uniqueness of the pressure $p$, we may additionally impose the condition $	\int_{\mathcal{O}}p(x,t)\d x=0$ in  $[0,T]$.  
The constant $\mu$ denotes the positive Brinkman coefficient (effective viscosity), while the positive constants $\alpha$ and $\beta$ correspond to the Darcy coefficient (related to the permeability of the porous medium) and the Forchheimer coefficient (associated with the porosity of the material), respectively. When $\alpha = \beta = 0$, the system reduces to the classical $d$-dimensional Navier-Stokes equations (NSEs). Hence, system \eqref{1} may also be viewed as the NSEs with damping. The absorption exponent satisfies $r \in [1,\infty)$, where $r = 3$ is known as the critical exponent, and $r > 3$ corresponds to a fast-growing nonlinearity (cf. \cite{KT2}). It was shown in \cite[Proposition 1.1]{KWH} that the critical homogeneous CBFEs possess the same scaling as the NSEs only in the case $\alpha = 0$; for other values of $\alpha$ and $r$, the system does not exhibit scale invariance. The particular case $r = 3$ with $\alpha = 0$ is often referred to as the NSEs with an absorption term (\cite{SNA}) or the tamed NSEs (\cite{MRXZ}).
When $\mathcal{O} = \mathbb{R}^d$, the boundary condition $\u = \mathbf{0}$ on $\partial\mathcal{O} \times [0,T]$ is replaced by a decay condition at infinity, that is,
\begin{align*}
    |\u(x,t)| \to 0 \quad \text{as} \quad |x| \to \infty, \quad \text{for all } 0 \le t \le T.
\end{align*}
Global well-posedness results for the deterministic CBFEs \eqref{1} can be found in \cite{SNA,CLF,KT2,Gautam+Mohan_2025}, among other works and the references therein.

	\subsection{Literature} The global solvability of the stochastic counterpart of the problem \eqref{1} and related models (forced by Gaussian noise) in the whole space or on a torus is available in the works \cite{HBAM,ZBGD,WLMR,MRXZ1}, etc. By using classical Faedo-Galerkin approximations and compactness method, the existence of martingale solutions for  stochastic 3D NSEs with nonlinear damping subjected to bounded domains and multiplicative Gaussian noise is obtained in \cite{LHGH1}. For a sample literature on the weak martingale solution for 2D and 3D NSEs and related models perturbed by Gaussian noise, the interested readers are referred, for instance, to \cite{ZBEM,GDAN,YTFC}.

For $d=2,3$ with $r\in[3,\infty)$ ($2\beta\mu\geq 1$ for $r=3$),  the authors in \cite{KK+MTM-SCBF,MTM6} established the global existence and uniqueness of pathwise strong solutions satisfying the energy equality (It\^o's formula) for 2D and 3D stochastic convective Brinkman-Forchheimer equations (SCBFEs)  subjected to multiplicative Gaussian and pure jump noise, respectively  by exploiting a monotonicity property of the linear and nonlinear operators as well as a stochastic generalization of  the Minty-Browder technique.  The existence and uniqueness of local and global  pathwise mild solutions for SCBFEs perturbed by additive L\'evy noise in $\R^d$ ($d\in \{2,3\}$) by exploiting the contraction mapping principle is established in \cite{MTM9}. For additive rough Gaussian noise (taking values in Lebesgue spaces), analogous results on unbounded domains were established in \cite{KKMTM-DCDSB}. Moreover, \cite{KKMTM-DCDSB} also proves the existence of random attractors and invariant measures. Wentzell-Freidlin type large deviation principles for the 2D and 3D SCBFEs driven by multiplicative Gaussian noise and pure jump noise are obtained in \cite{MTM7} and \cite{MTM8}, respectively. In addition to the  results available in \cite{KK+MTM-SCBF,MTM6}, we further show in this paper that the martingale solution satisfying the energy equality  exists for $d=r=3$ with any $\mu,\beta>0$. 
	
\subsection{Main results}	In this work, we consider the stochastic convective Brinkman-Forchheimer equations (SCBFEs) perturbed by Gaussian noise consisting of the $\Q$-Wiener process. The SCBFEs driven by multiplicative Gaussian noise are given by 
		\begin{equation}\label{31}
			\left\{
			\begin{aligned}
				\d\u -\mu \Delta\u+(\u\cdot\nabla)\u + \alpha \u + \beta|\u|^{r-1}\u+\nabla p &=\mathbf{F} + \boldsymbol{\sigma}(t,\u)\d\W, && \text{ in } \ \mathcal{O}\times(0,T), \\ \nabla\cdot\u&=0, \ &&\text{ in }  \mathcal{O}\times[0,T], \\
				\u&=\mathbf{0},&& \text{ on } \ \partial\mathcal{O}\times[0,T], \\
				\u(0)&=\u_0, && \text{ in } \ \mathcal{O},
			\end{aligned}
			\right.
		\end{equation} 
		where $\W$ is a $\Q$-Wiener process  and $\boldsymbol{\sigma}(\cdot,\cdot)$ is noise coefficient. This work has three primary objectives:
	\begin{enumerate}
    \item [1.] For all the cases given in Table \ref{Table-1}, we first show the existence of a weak martingale solution $((\bar{\Omega},\bar{\mathscr{F}},\{\bar{\mathscr{F}}_t\}_{t\geq 0},\bar{\mathbb{P}}),\bar{\u},\bar{\W})$  to SCBFEs, where $(\bar{\Omega},\bar{\mathscr{F}},\{\bar{\mathscr{F}}_t\}_{t\geq 0},\bar{\mathbb{P}})$ is a filtered probability space, $\bar{\W}$ is a $\Q$-Wiener process, and $\bar{\u}=\{\bar{\u}(t)\}_{t\in[0,T]}$ is a stochastic process with trajectories in $\C([0,T];\H_w)\cap\mathrm{L}^2(0,T;\V)\cap\mathrm{L}^{r+1}(0,T;\wi\L^{r+1})$, $\bar{\mathbb{P}}$-a.s., satisfying an appropriate integral inequality. We use the classical Faedo-Galerkin approximation, a stochastic compactness method, Jakubowski's version of the Skorokhod theorem for nonmetric spaces and martingale representation theorem to obtain this result. 
            	\begin{table}[ht]
{	\begin{tabular}{|c|c|c|c|c|}
		\hline
		\textbf{Cases}& $d$ &$ r$& conditions on $\mu>0$, $\alpha>0$ \& $\beta>0$ \\
		\hline
		\textbf{I}& $d=2$ &$r\in[1,\infty)$&  for any   $\mu$, $\alpha$ and $\beta$  \\
		\hline
		\textbf{II}& $d=3$ &$r\in[1,\infty)$& for any $\mu$, $\alpha$ and $\beta$ \\
		\hline
	\end{tabular}
\vskip 0.1 cm
\caption{Values of $\mu, \alpha, \beta$ and $r$ for the existence of a weak martingale solution}
\label{Table-1}}
		\end{table}

	\item [2.] For all the cases given in Table \ref{Table-2}, we prove that the martingale solution  satisfies the energy equality (It\^o's formula) and hence the trajectories are continuous $\H$-valued function defined on $[0, T]$, $\bar{\mathbb{P}}$-a.s. 
      \begin{table}[ht]
{	\begin{tabular}{|c|c|c|c|c|}
		\hline
		\textbf{Cases}& $d$ &$ r$& conditions on $\mu>0$, $\alpha>0$ \& $\beta>0$ \\
		\hline
		\textbf{I}& $d=2$ &$r\in[1,\infty)$&  for any   $\mu$, $\alpha$ and $\beta$  \\
		\hline
		\textbf{II}& $d=3$ &$r\in[3,\infty)$& for any $\mu$, $\alpha$ and $\beta$ \\
		\hline
	\end{tabular}
\vskip 0.1 cm
\caption{Values of $\mu, \alpha, \beta$ and $r$ for the energy equality (It\^o's formula)}
\label{Table-2}}
		\end{table}\\
    Furthermore, for all the cases given in Table \ref{Table-3}, we establish the pathwise uniqueness of weak martingale solutions and use the classical Yamada-Watanabe argument to derive the existence of a strong solution and hence the uniqueness in law. 
        	\begin{table}[ht]
{	\begin{tabular}{|c|c|c|c|c|}
		\hline
		\textbf{Cases}& $d$ &$ r$& conditions on $\mu>0$, $\alpha>0$ \& $\beta>0$ \\
		\hline
		\textbf{I}& $d=2$ &$r\in[1,\infty)$&  for any   $\mu$, $\alpha$ and $\beta$  \\
		\hline
		\textbf{II}& $d=3$ &$r\in(3,\infty)$& for any $\mu$, $\alpha$ and $\beta$ \\
		\hline
		\textbf{III}& $d=3$ &$r=3$&for  $2\beta\mu\geq1$ \\
		\hline
	\end{tabular}
\vskip 0.1 cm
\caption{Values of $\mu, \alpha, \beta$ and $r$ for the pathwise uniqueness and strong solutions}
\label{Table-3}}
		\end{table}

        \item[3.]  	For all the cases given in Table \ref{Table-3alpha}, we show that there exists an invariant measure associated with the stochastic system \eqref{31}. 
        \begin{table}[ht]
{	\begin{tabular}{|c|c|c|c|c|}
		\hline
		\textbf{Cases}& $d$ &$ r$& conditions on $\mu>0$, $\alpha>0$ \& $\beta>0$ \\
		\hline
		\textbf{I}& $d=2$ &$r\in[1,\infty)$&   $2\alpha>L$  \\
		\hline
		\textbf{II}& $d=3$ &$r\in(3,\infty)$&  $2\alpha>L$ \\
		\hline
		\textbf{III}& $d=3$ &$r=3$&  $2\beta\mu\geq1$ and $2\alpha>L$ \\
		\hline
	\end{tabular}
\vskip 0.1 cm
\caption{Values of $\mu, \alpha, \beta$ and $r$ for the existence of invariant measures. Here $L\geq 0$ is the constant appearing in Hypothesis \ref{hyp}.}
\label{Table-3alpha}}
		\end{table}

        In addition, for all the cases given in Table \ref{Table-4}, we show that there exists at most one invariant measure associated with the stochastic system \eqref{31}.
        \begin{table}[ht]
{	\begin{tabular}{|c|c|c|c|c|}
		\hline
		\textbf{Cases}& $d$ &$ r$& conditions on $\mu>0$, $\alpha>0$ \& $\beta>0$  \\
		\hline
		\textbf{I}& $d=2,3$ &$r\in(3,\infty)$& either $2\alpha> 2\hat{\zeta}+L$ or $2\beta\mu\geq1$ and $2\alpha> \frac{1}{2\mu} + L$  \\
		\hline
		\textbf{II}& $d=2,3$ &$r=3$&  $2\beta\mu\geq1$ and $2\alpha>L$ \\
        \hline
	\end{tabular}
\vskip 0.1 cm
\caption{Values of $\mu, \alpha, \beta$ and $r$ for the uniqueness of invariant measures. Here $\hat{\zeta}=\frac{r-3}{2\mu(r-1)}\left(\frac{4}{\beta\mu (r-1)}\right)^{\frac{2}{r-3}}$.}
\label{Table-4}}
		\end{table} 
	\end{enumerate}
	We mainly follow the work \cite{ZBEM} to obtain the existence of a weak martingale solution and \cite{ZBEMMO} to establish the existence of an invariant probability measure. It is worth mentioning here that the current findings complement the findings of the works \cite{KKMTM-DCDSB} and \cite{KK+MTM-SCBF}. 
    
 \subsection{Comparison with existing works}   In the stochastic setting, the CBFEs  have been previously investigated mainly for $d=2,3$ with $r\in[3,\infty)$ (with $2\beta\mu \ge 1$ when $r=3$), where the existence of  strong solutions on general domains $\mathcal{O}\subseteq\R^d$ was established using the classical monotonicity method (cf. \cite{KK+MTM-SCBF}), see also the work \cite{LHGH1} which incorporated the existence of martingale and strong solutions on bounded domains only. This technique relies crucially on the monotone structure of the nonlinear damping term. Moreover, within this parameter regime, the existence of invariant probability measures has been established only for bounded domains, relying on the compactness of the Sobolev embedding.  The present work extends these results in several significant ways. First, we establish the existence of weak martingale solutions for all 
$r\in[1,\infty)$ on general domains, both bounded and unbounded, thereby covering the entire range of absorption exponents, including those regimes where strong solution theory is not yet available. Second, in the cases where strong solutions are known to exist, we refine the analysis by proving that martingale solutions satisfy the energy equality and enjoy enhanced regularity. This, in turn, enables us to prove pathwise uniqueness and invoke the Yamada-Watanabe theorem to obtain the existence of strong solutions and uniqueness in law on general domains. Finally, we broaden the ergodic theory of the system by proving the existence of invariant probability measures for a much wider class of domains and exponents, and we show the uniqueness of the invariant measure in the regime $r\geq 3$ (with $2\beta\mu \ge 1$ when $r=3$), thereby strengthening and generalizing the previously known results. Let us summarize the similarities and differences between the current work and the articles \cite{KKMTM-DCDSB} and \cite{KK+MTM-SCBF} in Table \ref{Table-5}.
	\begin{table}[ht]
		{	\begin{tabular}{|c|c|c|c|}
				\hline
				\backslashbox{\textbf{Results$\downarrow$}}{\textbf{Article}$\to$}& \cite{KKMTM-DCDSB}  & 	\cite{KK+MTM-SCBF}& Current work\\
				\hline
                	Additive  Gaussian noise& $\checkmark$ & $\checkmark$ & $\checkmark$\\
				\hline
				General multiplicative Gaussian  noise& -- &$\checkmark$ & $\checkmark$ \\
				\hline
				A weak martingale solution for the cases in Table \ref{Table-1}& --  & -- & $\checkmark$ \\
				\hline
				It\^o's formula for all the cases in Table \ref{Table-2}& -- &  -- &$\checkmark$  \\
                \hline
				It\^o's formula for all the cases in Table \ref{Table-3}& $\checkmark$ &  -- &$\checkmark$  \\
				\hline
				A unique strong solution for all the cases in Table \ref{Table-3} & $\checkmark$ & --  &$\checkmark$\\
				\hline
				Invariant measures on unbounded domains  & $\checkmark$& -- & $\checkmark$ \\
				\hline
			\end{tabular}
		\vskip 0.1 cm 
			\caption{Similarities and differences between the current work and previous works.}
			\label{Table-5}}
	\end{table}
\begin{remark}
    It is noteworthy that for NSEs, the authors of  \cite[Theorem~4.8 and Lemma~A.1]{ZBEMMO} assume $\mathbf{F}\in \mathrm{L}^{p}(0,T;\mathbb{V}')$ in order to derive  $p^{\mathrm{th}}$ moment bounds for all $p\geq 2$, in analogy with Proposition~\ref{prop1}, see also \cite[Proposition 2.3]{SSSP}. 
However, the weaker condition $\mathbf{F}\in \mathrm{L}^{2}(0,T;\mathbb{V}')$ already suffices to obtain 
$p^{\mathrm{th}}$ moment estimates for every $p\geq 2$, see the proof of Proposition \ref{prop1} below.

\end{remark}

\begin{remark}
   It is important to emphasize that, in order to apply the abstract result of \cite{Maslowski+Seidler_1999} for establishing the existence of an invariant measure, one must first verify that the semigroup $\{\mathrm{T}_t\}_{t\geq0}$ possesses the $bw$-Feller property. This means that for every bounded sequentially weakly continuous function $\varphi : \mathbb{H}\to\mathbb{R}$ and every $t>0$, the mapping $\mathrm{T}_t\varphi : \mathbb{H}\to\mathbb{R}$ remains bounded and sequentially weakly continuous. In particular, for $t>0$, if $\mathbf{u}_{0,m}\rightharpoonup \mathbf{u}_0$ in $\mathbb{H}$, then 
\begin{align*}
    \mathrm{T}_t\varphi(\mathbf{u}_{0,m}) \to \mathrm{T}_t\varphi(\mathbf{u}_0),
\end{align*}
see Definition~\ref{def-bw-feller} below.  
In \cite{KK+MTM-SCBF}, the authors employ the monotonicity method to obtain the existence of strong solutions to the stochastic system \eqref{1}. However, this approach depends on strong convergence of the initial data and therefore does not readily extend to establishing the $bw$-Feller property of $\{\mathrm{T}_t\}_{t\geq0}$.  
In contrast, in the present work, we construct strong solutions to \eqref{1} via the martingale solution framework. This method enables us to verify the $bw$-Feller property of the semigroup $\{\mathrm{T}_t\}_{t\geq0}$; see Proposition~\ref{prop-bw-feller} below.
\end{remark}

\subsection{Organization of the article} The organization of the paper is as follows. In Section \ref{sec2}, we define the linear and nonlinear operators, a compact operator, and provide the necessary function spaces needed to obtain the main results of this article. In Section \ref{sec3}, we first provide an abstract formulation of the SCBFEs \eqref{31}. We also present one of our main results in this section, the existence of a weak martingale solution, as well as the existence and uniqueness of strong solutions (Theorems \ref{thm3.4} and \ref{thm3.5}).  Furthermore, we provide the necessary functional tools  like deterministic compactness criterion, the Aldous condition,  Jakubowski's version of the Skorokhod theorem, etc. in the same section. One of the main results on the existence of weak martingale solution $((\bar{\Omega},\bar{\mathscr{F}},\{\bar{\mathscr{F}}_t\}_{t\geq 0},\bar{\mathbb{P}}),\bar{\u},\bar{\W})$ of the stochastic system \eqref{31} for all the cases given in Table \ref{Table-1} (Theorem \ref{thm3.4}) is established in Section \ref{sec4} using the classical Faedo-Galerkin approximation, a compactness method,  Jakubowski's version of the Skorokhod theorem for nonmetric spaces and the martingale representation theorem. Section \ref{sec5} is devoted for establishing the regularity properties of weak martingale solutions and the existence and uniqueness of strong solutions. For all the cases given in Table \ref{Table-2}, we address the energy equality (It\^o's formula) satisfied by the martingale solutions (Propositions \ref{prop5.1} and \ref{prop5.3}). Moreover, for all the cases given in Table \ref{Table-3}, we prove  the pathwise uniqueness in  Proposition \ref{prop5.4} and derive  the existence of a strong solution and uniqueness in law using  the classical Yamada-Watanabe argument. 
In final section, we demonstrate the existence of an invariant measure associated with underlying system for all the cases given in Table \ref{Table-3alpha} using the abstract result developed in \cite{Maslowski+Seidler_1999}. Also, we show the uniqueness of invariant measures for all the cases given in Table \ref{Table-4}.
	
	\section{Mathematical Formulation}\label{sec2}\setcounter{equation}{0}
	This section is devoted to the introduction of the necessary operators, function spaces, and auxiliary results required to establish the main results of the article.
\subsection{Function spaces}\label{Function-spaces} 
Let $\C_0^{\infty}(\mathcal{O};\R^d)$ denote the space of all infinitely differentiable, $\R^d$-valued functions with compact support in $\mathcal{O}\subseteq\R^d$. We define
\begin{align*}
	\mathcal{V} &:= \{\u \in \C_0^{\infty}(\mathcal{O};\R^d) : \nabla \cdot \u = 0\},\\
	\mathbb{H} &:= \text{closure of } \mathcal{V} \text{ in } \L^2(\mathcal{O}) = \mathrm{L}^2(\mathcal{O};\R^d),\\
	\mathbb{V} &:= \text{closure of } \mathcal{V} \text{ in } \H^1(\mathcal{O}) = \mathrm{H}^1(\mathcal{O};\R^d),\\
	\widetilde{\L}^{p} &:= \text{closure of } \mathcal{V} \text{ in } \L^p(\mathcal{O}) = \mathrm{L}^p(\mathcal{O};\R^d), \quad p \in (1,\infty), \quad p\neq 2,\\
	\mathbb{V}_s &:= \text{closure of } \mathcal{V} \text{ in } \H^s(\mathcal{O}) = \mathrm{H}^s(\mathcal{O};\R^d), \quad s > 1.
\end{align*}

Under suitable smoothness assumptions on the boundary $\partial\mathcal{O}$, the spaces $\H$, $\V$, and $\widetilde{\L}^p$ can be characterized as follows:
\begin{align*}
    \H = \left\{ \u \in \L^2(\mathcal{O}) : \nabla \cdot \u = 0, \; \u \cdot \n\big|_{\partial\mathcal{O}} = 0 \right\},
\quad 
\|\u\|_{\H}^2 := \int_{\mathcal{O}} |\u(y)|^2 \, \mathrm{d}y,
\end{align*}
where $\n$ denotes the outward unit normal vector on $\partial\mathcal{O}$ and $\u \cdot \n\big|_{\partial\mathcal{O}}$ is understood in the sense of trace (\cite[Lemma 1.3, Chapter 1]{Te})
\begin{align*}
    \V = \left\{ \u \in \H_0^1(\mathcal{O}) : \nabla \cdot \u = 0 \right\},
\quad
\|\u\|_{\V}^2 := \int_{\mathcal{O}} |\u(y)|^2 \, \mathrm{d}y + \int_{\mathcal{O}} |\nabla \u(y)|^2 \, \mathrm{d}y,
\end{align*}
and
\begin{align*}
    \widetilde{\L}^p = \left\{ \u \in \L^p(\mathcal{O}) : \nabla \cdot \u = 0, \; \u \cdot \n\big|_{\partial\mathcal{O}} = 0 \right\},
\quad
\|\u\|_{\widetilde{\L}^p}^p := \int_{\mathcal{O}} |\u(y)|^p \, \mathrm{d}y.
\end{align*}

Let $(\cdot,\cdot)$ denote the inner product in the Hilbert space $\H$, and let $\langle \cdot, \cdot \rangle$ represent the duality pairing between $\V$ and its dual $\V'$, as well as between $\widetilde{\L}^p$ and its dual $\widetilde{\L}^{p'}$, where $\frac{1}{p} + \frac{1}{p'} = 1$.  
For any general Hilbert space $\mathrm{H}$, we denote its inner product by $(\cdot,\cdot)_{\mathrm{H}}$.  
Note that the space $\H$ can be identified with its dual $\H'$.

Following \cite[Subsection~2.1]{RFHK}, the sum space $\V' + \widetilde{\L}^{p'}$ is well defined and forms a Banach space equipped with the norm
\begin{align}\label{22}
	\|\u\|_{\V' + \widetilde{\L}^{p'}} 
	&:= \inf \left\{ \|\u_1\|_{\V'} + \|\u_2\|_{\widetilde{\L}^{p'}} : 
	\u = \u_1 + \u_2, \; \u_1 \in \V', \; \u_2 \in \widetilde{\L}^{p'} \right\} \nonumber\\
	&= \sup \left\{ 
	\frac{|\langle \u, \g \rangle|}{
	\|\g\|_{\widetilde{\L}^p \cap \V}} : 
	\boldsymbol{0} \neq \g \in \widetilde{\L}^p \cap \V \right\},
\end{align}
where
\[
\|\cdot\|_{\widetilde{\L}^p \cap \V} := 
\max\left\{ \|\cdot\|_{\widetilde{\L}^p}, \, \|\cdot\|_{\V} \right\}
\]
defines a norm on the Banach space $\widetilde{\L}^p \cap \V$.  
Moreover, the norm $\max\{\|\u\|_{\widetilde{\L}^p}, \|\u\|_{\V}\}$ is equivalent to both 
$\|\u\|_{\widetilde{\L}^p} + \|\u\|_{\V}$ and 
$\sqrt{\|\u\|_{\widetilde{\L}^p}^2 + \|\u\|_{\V}^2}$ on $\widetilde{\L}^p \cap \V$.

Finally, the following continuous embeddings hold:
\[
\widetilde{\L}^p \cap \V \hookrightarrow \V \hookrightarrow \H \cong \H' 
\hookrightarrow \V' \hookrightarrow \widetilde{\L}^{p'} + \V'.
\]
For a detailed functional framework, we refer the reader to \cite{KWH,Te1}, among others.

\subsection{The Helmholtz-Hodge projection}
It is well known (see \cite{Farwig+Kozono+Sohr_2007,DFHM,HKTY}) that any vector field $\u \in\L^2(\mathcal{O}) \cap \L^p(\mathcal{O})$, with $2 \leq  p < \infty$, admits a unique decomposition $\u = \v + \nabla q$, where $\v \in \L^2(\mathcal{O})\cap \L^p(\mathcal{O})$ is divergence-free in the distributional sense and satisfies $\v \cdot \n = 0$ on $\partial\mathcal{O}$ (in the sense of trace), while $\nabla q \in \mathbb{L}^{p}(\mathcal{O})\cap \mathbb{L}^{2}(\mathcal{O})$ with $q \in \mathbb{L}_{\mathrm{loc}}^{p}(\mathcal{O})\cap \mathbb{L}_{\mathrm{loc}}^{2}(\mathcal{O})$. This representation is known as the \emph{Helmholtz-Hodge} (or \emph{Helmholtz-Weyl}) decomposition.  
For smooth vector fields, this decomposition is orthogonal in $\L^2(\mathcal{O})$. Using this decomposition, we define the projection operator
\begin{align*}
    \mathcal{P}_p : \L^2(\mathcal{O})\cap \L^p(\mathcal{O}) \to \H\cap\widetilde{\L}^p, 
\quad 
\mathcal{P}_p \u := \v.
\end{align*}
The operator $\mathcal{P}_p$ is a bounded linear projection satisfying $\mathcal{P}_p^2 = \mathcal{P}_p$.  
In particular, for $p = 2$, we denote $\mathcal{P} := \mathcal{P}_2 : \L^2(\mathcal{O}) \to \H$, which is the \emph{orthogonal Helmholtz-Hodge projection}.  Moreover, it has also been established (see \cite{Farwig+Kozono+Sohr_2007}) that, for $2\leq p<\infty$, the adjoint map  $(\mathcal{P}_p)^{*}=\mathcal{P}_{p'}$, where $\frac{1}{p}+\frac{1}{p'}=1$ and 
\begin{align*}
    \mathcal{P}_{p'} : \L^2(\mathcal{O})+ \L^{p'}(\mathcal{O}) \to \H+\widetilde{\L}^{p'}.
\end{align*}

\begin{remark}
    For, $1\leq r <\infty$, the projection $\mathcal{P}_{\frac{r+1}{r}}$ has a continuous linear extension (still denoted by the same) ${\mathcal{P}}_{\frac{r+1}{r}}: \H^{-1} + \L^{\frac{r+1}{r}} \to \V' + \wi\L^{\frac{r+1}{r}}$, see e.g. \cite[Proposition 3.1]{Kunstmann_2010}.
\end{remark}

\subsection{Linear operator}\label{opeA}
Let us  introduce  the linear operator defined by 
\begin{equation*}
	\A\u:=-\mathcal{P}_{\frac{r+1}{r}}\Delta\u,\ \u\in\V\cap\wi\L^{r+1}.
\end{equation*}
Remember that the operator $\A$ is a non-negative operator in $\H$ and \begin{align}\label{2.7a}
	\left<\A\u,\u\right>=\|\nabla\u\|_{\H}^2,\ \textrm{ for all }\ \u\in\V\cap\wi\L^{r+1}, \ \text{ so that }\ \|\A\u\|_{\V^{\prime}+\wi\L^{\frac{r+1}{r}}} \leq \|\u\|_{\V}.
\end{align}

\subsection{Bilinear operator}
	Let us define the \emph{trilinear form} $b(\cdot,\cdot,\cdot):\V\cap\wi\L^{r+1}\times\V\cap\wi\L^{r+1}\times\V\cap\wi\L^{r+1}\to\R$ by $$b(\u,\v,\w)=\int_{\mathcal{O}}(\u(x)\cdot\nabla)\v(x)\cdot\w(x)\d x=\sum_{i,j=1}^d\int_{\mathcal{O}}\u_i(x)\frac{\partial \v_j(x)}{\partial x_i}\w_j(x)\d x.$$ If $\u, \v$ are such that the linear map $b(\u, \v, \cdot) $ is continuous on $\V\cap\wi\L^{r+1}$, the corresponding element of $\V'+\wi\L^{\frac{r+1}{r}}$ is denoted by $\B(\u, \v)$. We also denote  $\B(\u) :=  \B(\u, \u)= \mathcal{P}_{\frac{r+1}{r}} [(\u\cdot\nabla)\u]$.
	An integration by parts yields  
	\begin{equation}\label{b0}
		\left\{
		\begin{aligned}
			b(\u,\v,\v) &= 0,\ \text{ for all }\ \u,\v \in\V,\\
			b(\u,\v,\w) &=  -b(\u,\w,\v),\ \text{ for all }\ \u,\v,\w\in \V.
		\end{aligned}
		\right.\end{equation}
	For $r\in[1,3]$,  using H\"older's inequality, we have 
	$
	\left|\langle \B(\u,\u),\v\rangle \right|=\left|b(\u,\v,\u)\right|\leq\|\u\|_{\wi\L^4}^2\|\v\|_{\V},
	$ for all $\v\in\V$ so that $$\|\B(\u)\|_{\V'}\leq\|\u\|_{\wi\L^4}^2, \ \text{ for all }\ \u\in\wi\L^4, $$ and we conclude that $\B(\cdot):\V\cap\widetilde{\L}^{4}\to\V'+\widetilde{\L}^{\frac{4}{3}}$. Furthermore, we have 
		\begin{align*}
			\|\B(\u)-\B(\v)\|_{\V'}&\leq \left(\|\u\|_{\widetilde{\L}^{4}}+\|\v\|_{\widetilde{\L}^{4}}\right)\|\u-\v\|_{\widetilde{\L}^{4}},
		\end{align*}
		hence $\B(\cdot):\V\cap\widetilde{\L}^{4}\to\V'+\widetilde{\L}^{\frac{4}{3}}$ is a locally Lipschitz operator. 
	An application of H\"older's inequality yields
	\begin{align*}
		|b(\u,\v,\w)|=|b(\u,\w,\v)|\leq \|\u\|_{\widetilde{\L}^{r+1}}\|\v\|_{\widetilde{\L}^{\frac{2(r+1)}{r-1}}}\|\w\|_{\V},
	\end{align*}
	for all $\u\in\V\cap\widetilde{\L}^{r+1}$, $\v\in\V\cap\widetilde{\L}^{\frac{2(r+1)}{r-1}}$ and $\w\in\V$, so that we obtain  
	\begin{align}\label{2p9}
		\|\B(\u,\v)\|_{\V'}\leq \|\u\|_{\widetilde{\L}^{r+1}}\|\v\|_{\widetilde{\L}^{\frac{2(r+1)}{r-1}}}.
	\end{align}
Using the interpolation inequality, we get 
	\begin{align}\label{212}
		\left|\langle \B(\u,\u),\v\rangle \right|=\left|b(\u,\v,\u)\right|
        \leq \|\u\|_{\widetilde{\L}^{r+1}}\|\u\|_{\widetilde{\L}^{\frac{2(r+1)}{r-1}}}\|\v\|_{\V}
    \leq\|\u\|_{\widetilde{\L}^{r+1}}^{\frac{r+1}{r-1}}\|\u\|_{\H}^{\frac{r-3}{r-1}}\|\v\|_{\V},
	\end{align}
	{for $r> 3$ } and  all $\v\in\V$. Thus, we can  deduce that 
	\begin{align}\label{2.9a}
		\|\B(\u)\|_{\V'}\leq\|\u\|_{\widetilde{\L}^{r+1}}^{\frac{r+1}{r-1}}\|\u\|_{\H}^{\frac{r-3}{r-1}}.
	\end{align}
	Using \eqref{2p9} and the interpolation inequality, for $\u,\v\in\V\cap\widetilde{\L}^{r+1}$, we also have  
	\begin{align}\label{lip}
		\|\B(\u)-\B(\v)\|_{\V'}&\leq \left(\|\u\|_{\H}^{\frac{r-3}{r-1}}\|\u\|_{\widetilde{\L}^{r+1}}^{\frac{2}{r-1}}+\|\v\|_{\H}^{\frac{r-3}{r-1}}\|\v\|_{\widetilde{\L}^{r+1}}^{\frac{2}{r-1}}\right)\|\u-\v\|_{\widetilde{\L}^{r+1}},
	\end{align}
	{for $r> 3$}. Therefore, the map $\B(\cdot):\V\cap\wi\L^{r+1}\to\V'+\wi\L^{\frac{r+1}{r}}$ is locally Lipschitz. 

\begin{remark}\label{rem-trilinear-ext}
    For $s>\frac{d}{2}+1$, $\u,\v\in\H$ and $\w\in\V_s$, we have 
    \begin{align*}
        |b(\u,\v,\w)|=|b(\u,\w,\v)| \leq \|\u\|_{\H}\|\v\|_{\H}\|\nabla\w\|_{\L^{\infty}} \leq C \|\u\|_{\H}\|\v\|_{\H}\|\w\|_{\V_s},
    \end{align*}
    for some positive constant $C$. Thus $b$ can be uniquely extended to the trilinear form (still denoted by the same) $b:\H\times\H \times \V_s \to \R$. In parallel, the operator $\B$ can be uniquely extended to a bounded bilinear operator (still denoted by the same) $\B:\H\times\H\to \V_s^{\prime}$ such that 
    \begin{align}\label{eqn-trilinear-ext}
        \|\B(\u,\v)\|_{\V^{\prime}_s}\leq C \|\u\|_{\H}\|\v\|_{\H}, \;\;\; \u,\v\in\H.
    \end{align}
\end{remark}

\subsection{Nonlinear operator}
Let us introduce the  nonlinear operator defined by
\begin{equation*}
	\mathcal{C}(\u):=-\mathcal{P}_{\frac{r+1}{r}}[|\u|^{r-1}\u],\ \u\in\V\cap\wi\L^{r+1}.
\end{equation*}
It can be easily seen that $\langle\mathcal{C}(\u),\u\rangle =\|\u\|_{\widetilde{\L}^{r+1}}^{r+1}$. An application of the Mean Value Theorem and the H\"older inequality provides the estimate (see \cite[Subsection 2.4, pp. 8]{Gautam+Mohan_2025})
\begin{align}\label{213}
	&\langle \mathcal{C}(\u)-\mathcal{C}(\v),\w\rangle \leq r\left(\|\u\|_{\widetilde{\L}^{r+1}}+\|\v\|_{\widetilde{\L}^{r+1}}\right)^{r-1}\|\u-\v\|_{\widetilde{\L}^{r+1}}\|\w\|_{\widetilde{\L}^{r+1}},
\end{align}
for all $\u,\v, \w\in\V\cap\widetilde{\L}^{r+1}$. 
Thus the operator $\mathcal{C}(\cdot):\V\cap\widetilde{\L}^{r+1}\to \V^{\prime}+\widetilde{\L}^{\frac{r+1}{r}}$ is locally Lipschitz. Furthermore, 	for any $r\in[1,\infty)$, we have (see \cite{KK+MTM-SCBF})
	\begin{align}\label{2.23}
		&\langle\mathcal{C}(\u)-\mathcal{C}(\v),\u-\v\rangle\geq \frac{1}{2}\||\u|^{\frac{r-1}{2}}(\u-\v)\|_{\H}^2+\frac{1}{2}\||\v|^{\frac{r-1}{2}}(\u-\v)\|_{\H}^2\geq \frac{1}{2^{r-1}}\|\u-\v\|_{\wi\L^{r+1}}^{r+1}\geq 0,
	\end{align}
	for $r\geq 1$ 	and all $\u,\v\in\V\cap\wi\L^{r+1}$.

	\subsection{A compact operator}\label{C_O}
In this section, we discuss the existence of an unbounded, self-adjoint operator whose inverse is compact. Further details can be found in \cite[Subsection 2.3]{ZBEM}.

Let us fix $s>2$. Since the embedding $\V_s\hookrightarrow\V$ is dense and continuous, in view of \cite[Lemma C.1]{ZBEM}, there exists a Hilbert space $\mathbb{U}$ such that $\mathbb{U}\subset\V_s$ and $\mathbb{U}\hookrightarrow\V_s$ is dense and  compact. In addition, there exists an onto and self-adjoint operator $\mathfrak{L}:\D(\mathfrak{L}) \subset \mathbb{U} \to\H$ such that $\D(\mathfrak{L})$ is dense in $\H$, $\mathfrak{L}^{-1}$ is compact and 
\begin{align}\label{eqn-compact-op-L}
	(\mathfrak{L}\u,\w)_{\H}=(\u,\w)_{\mathbb{U}}, \ \ \ \ \u\in\D(\mathfrak{L}), \ \w\in\mathbb{U}.
\end{align}
Therefore, there exists an orthonormal basis $\{\boldsymbol{e}_i\}_{i\in\N} \subset \D(\mathfrak{L})\subset \mathbb{U} $ of $\H$ such that 
\begin{align}\label{L3}
\mathfrak{L}\boldsymbol{e}_i=\nu_i\boldsymbol{e}_i,\ \ \ \ i\in\N.
\end{align}
 
Let us choose and fix $m\in\N$. We define an operator $\Pi_m$ from $\mathbb{U}'$ to $\mathrm{span}\{\boldsymbol{e}_1,\ldots,\boldsymbol{e}_m\}=:\H_{m}$  by 
\begin{align}\label{eqn-projection}
\Pi_m\u^*:=\sum_{i=1}^{m}\langle\u^*,\boldsymbol{e}_i\rangle_{\mathbb{U}'\times\mathbb{U}}\boldsymbol{e}_i, \ \ \ \ \ \ \u^*\in\mathbb{U}'.
\end{align}
The restriction of operator $\Pi_m$ to the space $\H$ will be considered and denoted still by the  same. Since, $\H\hookrightarrow\mathbb{U}'$, that is, every element $\u\in\H$ induces a functional $\u^*\in\mathbb{U}'$ by 
\begin{align}
\langle\u^*,\v\rangle_{\mathbb{U}'\times\mathbb{U}}:=(\u,\v)_{\H}, \ \ \ \ \v\in\mathbb{U}.
\end{align}
The restriction of $\Pi_m$ to $\H$ is given by  (still denoted by the  same symbol)
\begin{align}
	\Pi_m\u:=\sum_{i=1}^{m}(\u,\boldsymbol{e}_i)\boldsymbol{e}_i, \ \ \ \ \ \ \u\in\H.
\end{align}
Hence, in particular, $\Pi_m$ is the orthogonal projection from $\H$ onto $\H_m$.  In addition, we also have (see \cite[Lemma 2.3]{ZBEM})
\begin{align}\label{eqn-adjoint-projection}
    (\Pi_m\u^*, \v)_{\H} = \langle \u^*, \Pi_m\v\rangle_{\mathbb{U}' \times\mathbb{U}}, \;\;\; \u^*\in \mathbb{U}', \; \v \in \mathbb{U}.
\end{align}
\begin{lemma}[{\cite[Lemma 2.4]{ZBEM}}]\label{lem-conv-Pi_m}
	For every $\u\in\mathbb{U}$ and $s>2$, we have 
	\begin{itemize}
        \item [(i)] $ \|\Pi_m\u\|_{\mathbb{U}} \leq \|\u\|_{\mathbb{U}}$,
		\item [(ii)] $\lim\limits_{m\to\infty}\|\Pi_m\u-\u\|_{\mathbb{U}}=0$,
		\item [(iii)] $\lim\limits_{m\to\infty}\|\Pi_m\u-\u\|_{\mathbb{V}_s}=0$, 
		\item [(iv)] $\lim\limits_{m\to\infty}\|\Pi_m\u-\u\|_{\mathbb{V}}=0$,
        \item [(v)] $\lim\limits_{m\to\infty}\|\Pi_m\u-\u\|_{\wi\L^{r+1}}=0$.
	\end{itemize}
\end{lemma}

\subsection{A sequence of bounded domains} Let $\{\mathcal{O}_k\}_{k\in\N}$ be a sequence of bounded open subsets of $\mathcal{O}$ with regular boundaries $\partial\mathcal{O}_k$ such that $\mathcal{O}_k \subset \mathcal{O}_{k+1}$ and $\bigcup\limits_{k=1}^{\infty}\mathcal{O}_k =\mathcal{O}$. We will consider the following spaces of restrictions of the functions defined on $\mathcal{O}$ to subsets $\mathcal{O}_k$, that is,
\begin{align}
    \H_{\mathcal{O}_k} := \{\v|_{\mathcal{O}_k} : \v\in\H\}, \;\;\;\;\; \V_{\mathcal{O}_k} := \{\v|_{\mathcal{O}_k} : \v\in\V\}
\end{align}
with appropriate scalar product and norms, that is,
\begin{align*}
    (\v_1,\v_2)_{\H_{\mathcal{O}_k}} & := \int_{\mathcal{O}_k} \v_1\cdot \v_2 \d x, \qquad \v_1,\v_2 \in \H_{\mathcal{O}_k},\\
    (\v_1,\v_2)_{\V_{\mathcal{O}_k}} & := \int_{\mathcal{O}_k} \v_1\cdot \v_2 \d x + \int_{\mathcal{O}_k} \nabla \v_1 : \nabla \v_2 \d x,  \qquad  \v_1,\v_2 \in \V_{\mathcal{O}_k},
\end{align*}
and $\|\v\|^2_{\H_{\mathcal{O}_k}}: = (\v,\v)_{\H_{\mathcal{O}_k}}$ for $\v \in \H_{\mathcal{O}_k}$, and $\|\v\|^2_{\V_{\mathcal{O}_k}}: = (\v,\v)_{\V_{\mathcal{O}_k}}$ for $\v \in \V_{\mathcal{O}_k}$.  We will denote by $\H^{\prime}_{\mathcal{O}_k}$ and $\V^{\prime}_{\mathcal{O}_k}$ for the corresponding dual spaces. Since the sets $\mathcal{O}_k$ are bounded,
\begin{align}
    \mbox{ the embeddings $\V_{\mathcal{O}_k}\hookrightarrow \H_{\mathcal{O}_k}$ is compact. }
\end{align}

		\section{Stochastic Convective Brinkman-Forchheimer Equations} \label{sec3}\setcounter{equation}{0}
		In this section, we discuss  the existence of weak martingale solutions for the stochastic system \eqref{31}. Let us first provide an abstract formulation of the stochastic system \eqref{31}.  By taking the Helmholtz-Hodge projection $\mathcal{P}_{\frac{r+1}{r}}$ onto the first equation in \eqref{31}, we find
		\begin{equation}\label{32}
			\left\{
			\begin{aligned}
				\d\u(t)+ \mu \A\u(t)\d t + \B(\u(t)) \d t + \alpha \u(t)\d t + \beta\mathcal{C}(\u(t)) \d t & =  \mathbf{F}(t) + \boldsymbol{\sigma}(t,\u(t))\W(t), \\
				\u(0)&=\u_0,
			\end{aligned}
			\right.
		\end{equation}
	for $t\in(0,T)$. Strictly speaking, one should write $\mathcal{P}_{\frac{r+1}{r}}\mathbf{F}$ and $\mathcal{P}_{\frac{r+1}{r}}\boldsymbol{\sigma}$   for $\mathbf{F}$ and  $\boldsymbol{\sigma}$, respectively.

\subsection{Stochastic setting} Let $(\Omega,\mathscr{F},\mathbb{P})$ be a complete probability space equipped with an increasing family of sub-sigma fields $\{\mathscr{F}_t\}_{t\geq 0}$ of $\mathscr{F}$ satisfying the usual conditions.

\subsubsection{$\Q$-Wiener process} Firstly, we provide the definition and properties of $\Q$-Wiener processes. Let $\H$ be a separable Hilbert space. 
\begin{definition}
	A stochastic process $\{\W(t)\}_{t\geq 0}$ is said to be an \emph{$\H$-valued $\mathscr{F}_t$-adapted
		$\Q$-Wiener process} with covariance operator $\Q$ if
	\begin{enumerate}
		\item [$(i)$] for each non-zero $h\in \H$, $\|\Q^{1/2}h\|_{\H}^{-1} (\W(t), h)$ is a standard one dimensional Wiener process,
		\item [$(ii)$] for any $h\in \H, (\W(t), h)$ is a martingale adapted to $\mathscr{F}_t$.
	\end{enumerate}
\end{definition}
The stochastic process $\{\W(t)\}_{t\geq 0}$ is a $\Q$-Wiener process with covariance $\Q$ if and only if for arbitrary $t$, the  random field  $\W(\cdot)$ can be expressed as $\W(\cdot,x) =\sum_{k=1}^{\infty}\sqrt{\mu_k}\boldsymbol{q}_k(x)\beta_k(\cdot)$, where  $\beta_{k}(\cdot),k\in\mathbb{N}$, are independent one dimensional Brownian motions on $(\Omega,\mathscr{F},\mathbb{P})$ and $\{\boldsymbol{q}_k \}_{k=1}^{\infty}$ is an orthonormal basis of $\H$ such that $\Q \boldsymbol{q}_k=\mu_k \boldsymbol{q}_k$.  If $\W(\cdot)$ is a $\Q$-Wiener process  with $\Tr \Q=\sum_{k=1}^{\infty} \mu_k< +\infty$, then $\W(\cdot)$ is a Gaussian process on $\H$ and $ \E[\W(t)] = 0,$ $\textrm{Cov} [\W(t)] = t\Q,$ $t\geq 0.$ The space $\H_0=\Q^{1/2}\H$ is a Hilbert space equipped with the inner product $(\cdot, \cdot)_0$, $$(\u, \v)_0 =\sum_{k=1}^{\infty}\frac{1}{\mu_k}(\u,\boldsymbol{q}_k)(\v,\boldsymbol{q}_k)= (\Q^{-1/2}\u, \Q^{-1/2}\v),\ \text{ for all } \ \u, \v\in \H_0,$$ where $\Q^{-1/2}$ is the pseudo-inverse of $\Q^{1/2}$.

Let $\mathcal{L}(\H)$ denote the space of all bounded linear operators on $\H$ and $\mathcal{L}_{\Q}:=\mathcal{L}_{\Q}(\H)$ represent the space of all Hilbert-Schmidt operators from $\H_0:=\Q^{1/2}\H$ to $\H$.  Since $\Q$ is a trace class operator, the embedding of $\H_0$ in $\H$ is Hilbert-Schmidt and the space $\mathcal{L}_{\Q}$ is a Hilbert space equipped with the norm $ \left\|\Phi\right\|^2_{\mathcal{L}_{\Q}}=\Tr\left(\Phi {\Q}\Phi^*\right)=\sum_{k=1}^{\infty}\| {\Q}^{1/2}\Phi^*\boldsymbol{q}_k\|_{\H}^2 $ and inner product $ \left(\Phi,\Psi\right)_{\mathcal{L}_{\Q}}=\Tr\left(\Phi {\Q}\Psi^*\right)=\sum_{k=1}^{\infty}\left({\Q}^{1/2}\Psi^*\boldsymbol{q}_k,{\Q}^{1/2}\Phi^*\boldsymbol{q}_k\right) $. For more details, the interested readers are referred to see \cite{DaZ}.

Let us now provide the assumptions satisfied by the noise coefficient $\boldsymbol{\sigma}$.

\begin{hypothesis}\label{hyp}
We assume that  $\{\W(t)\}_{t\geq 0}$ is an $\H$-valued $\Q$-Wiener process 
on the stochastic basis $(\Omega,\mathscr{F},\{\mathscr{F}_t\}_{t\geq 0},\mathbb{P})$. The noise coefficient $\boldsymbol{\sigma}(\cdot,\cdot)$ satisfies: 
	\begin{itemize}
		\item [(H.1)] The function $\boldsymbol{\sigma}\in\C([0,T]\times(\V\cap\wi\L^{r+1});\mathcal{L}_{\Q}(\H))$.
		\item[(H.2)]  (Growth condition)
		There exists a positive
		constant $K$ such that for all $t\in[0,T]$ and $\u\in\H$,
		\begin{equation*}
		\|\boldsymbol{\sigma}(t,\u)\|_{\mathcal{L}_{\Q}}^2  	\leq K \left(1 +\|\u\|_{\H}^{2}\right).
		\end{equation*}
		
		\item[(H.3)]  (Lipschitz condition)
		There exists a constant $L\geq 0$ such that for any $t\in[0,T]$ and all $\u_1,\u_2\in\H$,
		\begin{align*}
			\|\boldsymbol{\sigma}(t,\u)-\boldsymbol{\sigma}(t,\v)\|_{\mathcal{L}_{\Q}}^2 	\leq L\|\u_1 -	\u_2\|_{\H}^2.
		\end{align*} 
        \item[(H.4)]   (A continuity condition)
		For every $\psi\in \mathcal{V}$, the mapping 
        \begin{align*}
            \u\mapsto \mathrm{Q}^{\frac{1}{2}} \boldsymbol{\sigma}(\cdot,{\u})^* \psi \in \H \text{ is continuous},
        \end{align*}
        if in the space $\H$ we consider the Fr\'echet topology inherited from the space $\mathbb{L}_{\mathrm{loc}}^{2}(\mathcal{O};\R^d)$.
	\end{itemize}
\end{hypothesis}	

\begin{remark}
    An example satisfying all the conditions of Hypothesis~\ref{hyp} can be constructed by following the procedure described in \cite[Section~6]{ZBEMMO}.
\end{remark}

\begin{remark}
    Hypothesis \ref{hyp} (H.4) is essential in unbounded domain case for the approach which we are following in this article. But if one makes use of the  monotonicity method, such assumption is not required (see \cite{KK+MTM-SCBF}). However, monotonicity method can only be used for the cases given in Table \ref{Table-4}. In addition, Hypothesis \ref{hyp} (H.4) is not required in the case of bounded domains.
\end{remark}

	\subsection{Weak martingale and strong solutions} Let us first  provide the definition of weak martingale solutions  for the SCBFEs \eqref{32}. 
	\begin{definition}[Weak martingale solution]\label{defn3.1}
		A weak martingale solution of the SCBFEs \eqref{32} is a system $((\bar{\Omega},\bar{\mathscr{F}},\{\bar{\mathscr{F}}_t\}_{t\geq 0},\bar{\mathbb{P}}),\bar{\u},\bar{\W}),$ where 
		\begin{enumerate}
			\item [(a)] $(\bar{\Omega},\bar{\mathscr{F}},\{\bar{\mathscr{F}}_t\}_{t\geq 0},\bar{\mathbb{P}})$ is a filtered probability space with a filtration $\{\bar{\mathscr{F}}_t\}_{t\geq 0}$, that is, a set of  sub $\sigma$-fields of $\mathscr{F}$ with $\mathscr{F}_s\subset\mathscr{F}_t\subset\mathscr{F}$ for $0\leq s<t<\infty$, 
			\item [(b)] $\bar{\W}$ is a $\Q$-Wiener process on  $(\bar{\Omega},\bar{\mathscr{F}},\{\bar{\mathscr{F}}_t\}_{t\geq 0},\bar{\mathbb{P}})$,
			\item [(c)] $\bar{\u}:[0,T]\times\bar\Omega\to\H$ is a predictable process with $\bar{\mathbb{P}}$-a.e. paths  $$\bar{\u}(\cdot,\omega)\in\C([0,T];\H_w)\cap\mathrm{L}^2(0,T;\V)\cap\mathrm{L}^{r+1}(0,T;\wi\L^{r+1})$$ such that for all $t\in[0,T]$ and all $\v\in\V\cap\wi{\L}^{r+1}$, the following identity holds $\bar{\mathbb{P}}$-a.s.:
			\begin{align}\label{3.2}
				(\bar\u(t),\v)&=(\u_0,\v)-\int_0^t\langle\mu \A\bar\u(s)+\B(\bar\u(s)) + \alpha \u(s) +\beta\mathcal{C}(\bar\u(s)) - \mathbf{F}(s) ,\v\rangle\d s 
                \nonumber\\ & \quad + \int_0^t\left(\boldsymbol{\sigma}(s,\bar\u(s))\d\bar\W(s),\v\right).
			\end{align}
		\end{enumerate}
	\end{definition}

	
Next, we present the definition of strong  solutions  for the SCBFEs \eqref{32}.

	\begin{definition}
	We say that the stochastic system \eqref{32}  has a \emph{strong solution} if and only if for every stochastic basis $(\Omega,\mathscr{F},\{\mathscr{F}\}_{t\geq 0},\mathbb{P})$ and $\Q$-Wiener processes $\{\W(t)\}_{t\geq 0}$ defined on this stochastic basis,  there exists a progressively measurable process $\u :[0,T]\times\Omega\to\H$ with $\mathbb{P}$-a.s. paths
		\begin{align*}
			\u(\cdot,\omega)\in\C([0,T];\H_w)\cap\mathrm{L}^2(0,T;\V)\cap\mathrm{L}^{r+1}(0,T;\wi\L^{r+1})
		\end{align*}
		such that for all $t \in[0,T]$  and all $\v\in\V\cap\wi\L^{r+1}$, the following identity holds $\mathbb{P}$-a.s.:
		\begin{align}
			 (\u(t),\v) 
              & =(\u_0,\v)-\int_0^t\langle\mu \A\u(s)+\B(\u(s)) + \alpha \u(s) +\beta\mathcal{C}(\u(s)) - \mathbf{F}(s),\v\rangle\d s 
              \nonumber\\ & \quad + \int_0^t\left(\boldsymbol{\sigma}(s,\u(s))\d\W(s),\v\right).
		\end{align}
	\end{definition}
	
For all the cases given in Table \ref{Table-3}, the existence of strong solutions for the SCBFEs  \eqref{32} satisfying the energy equality (It\^o formula, see \eqref{3p5} below)  is established in \cite{KK+MTM-SCBF,MTM6}, etc. via monotonicity methods. Let us now recall two important concepts of uniqueness of the solution, that is,  pathwise uniqueness and uniqueness in law (cf. \cite{IW}). 
	
	\begin{definition}
		We say that solutions of the stochastic system \eqref{32} are pathwise unique if and only if the following condition	holds:
		\begin{itemize}
			\item [] if $((\Omega,\mathscr{F},\{\mathscr{F}\}_{t\geq 0},\mathbb{P}),\W,\u^i)$, $i=1,2,$ are such solutions of the stochastic system \eqref{32} that $\u^i(0)=\u_0$, $i=1,2,$ then  for all $t\in[0,T]$, $\u^1(t)=\u^2(t)$, $\mathbb{P}$-a.s. 
		\end{itemize}
	\end{definition}
	\begin{definition}
		We say that solutions of the stochastic system \eqref{32} are unique in law if and only if  the following condition	holds:
		\begin{itemize}
			\item [] if $((\Omega^i,\mathscr{F}^i,\{\mathscr{F}^i\}_{t\geq 0},\mathbb{P}^i),\W^i,\u^i)$, $i=1,2,$ are such solutions of the stochastic system \eqref{32} that $\u^i(0)=\u_0$, $i=1,2,$ then $\mathscr{L}_{\mathbb{P}^1}(\u^1)=\mathscr{L}_{\mathbb{P}^2}(\u^2)$,  where $\mathscr{L}_{\mathbb{P}^i}(\u^i)$ denotes the law of $\u^i$ with respect to $(\Omega^i,\mathscr{F}^i,\{\mathscr{F}^i\}_{t\geq 0},\mathbb{P}^i)$.
		\end{itemize}
	\end{definition}
The pathwise uniqueness implies  uniqueness in law  (cf. \cite{Me}). For $\u_0\in\H$, the stochastic system \eqref{32} admits a  weak martingale  solution and if it has  pathwise  uniqueness property,  then for any given $(\Omega,\mathscr{F},\{\mathscr{F}\}_{t\geq 0},\mathbb{P})$ and any Brownian motion with covariance $\Q$ on this stochastic basis, the problem \eqref{1} has a unique strong solution (cf. \cite{Me,OMa,MRBS}, etc.).  

The main results of this work corresponding to martingale and strong solutions are the following: 
\begin{theorem}\label{thm3.4}
	Let $\u_0\in\H$ and  $\mathbf{F}\in \mathrm{L}^{2}(0,T;\V')$. Under Hypothesis \ref{hyp}, for all the cases given in Table \ref{Table-1}, the stochastic system \eqref{32} has a weak martingale solution $((\bar{\Omega},\bar{\mathscr{F}},\{\bar{\mathscr{F}}_t\}_{t\geq 0},\bar{\mathbb{P}}),\bar{\W},\bar{\u})$ in the sense of Definition \ref{defn3.1}. Furthermore, the solution satisfies the estimate for all $p\geq 1$:
	\begin{align}
    \E \left[ \sup_{t\in[0,T]} \|\bar{\u}(t)\|^{2p}_{\H}+ p\int_{0}^{T}\|\bar{\u}(s)\|_{\H}^{2p-2}\left(\mu\|\nabla\bar{\u}(s)\|^2_{\H}+\alpha\|\bar{\u}(s)\|^2_{\H}+2\beta\|\bar{\u}(s)\|^{r+1}_{\wi\L^{r+1}}\right)\d s\right] <\infty.
	\end{align}
\end{theorem}

\begin{theorem}\label{thm3.5}
	Let $\u_0\in\H$, $\mathbf{F}\in \mathrm{L}^{2}(0,T;\V')$  and  Hypothesis \ref{hyp} be satisfied. 
	\begin{enumerate}
    \item [(1)] For all the cases given in Table \ref{Table-2}, if $((\Omega,\mathscr{F},\{\mathscr{F}\}_{t\geq 0},\mathbb{P}),\W,\u)$ is a weak martingale solution of the stochastic system \eqref{32}, then for $\mathbb{P}$-almost all $\omega\in\Omega$, the trajectory $\u(\cdot,\omega)$ is continuous $\H$-valued function defined on $[0, T]$ satisfying the energy equality (It\^o's formula):
			\begin{align}\label{3p5}
			&	\|{\u}(t)\|_{\H}^2+2\mu \int_0^t\|\nabla \u(s)\|_{\H}^2\d s+ 2\alpha \int_0^T\|\u(t)\|_{\H}^2\d t  +2\beta\int_0^t\|{\u}(s)\|_{\widetilde{\L}^{r+1}}^{r+1}\d s\nonumber\\&=\|{\u_0}\|_{\H}^2 + 2 \int_0^{t}\left\langle \mathbf{F}(s),{\u}(s)\right\rangle \d s + 2 \int_0^{t}(\boldsymbol{\sigma}(s,{\u}(s))\d{\W}(s),{\u}(s))+  \int_0^{t}\|\boldsymbol{\sigma}(s,{\u}(s))\|_{\mathcal{L}_{\Q}}^2\d s,
		\end{align}
		for all $t\in[0,T]$, ${\mathbb{P}}$-a.s. 
		\item [(2)] For all the cases given in Table \ref{Table-3}, pathwise uniqueness of weak solutions of the stochastic system \eqref{32} holds. In addition, the classical Yamada-Watanabe argument ensures the existence of a unique strong solution and uniqueness in law.
	\end{enumerate}
\end{theorem}
We prove  Theorem \ref{thm3.4} in Section \ref{sec4} and Theorem \ref{thm3.5} in Section \ref{sec5}. 
In the subsections below, we furnish some preliminaries and important results to establish Theorem \ref{thm3.4}.

\subsection{Deterministic compactness criterion}
Let us introduce the following spaces, which are used frequently in the paper: 
\begin{itemize}
	\item $\C([0,T];\mathbb{U}'):=$ the space of continuous functions $\v:[0,T]\to\mathbb{U}'$ with the topology $\mathcal{T}_1$ induced by the norm $\|\v\|_{\C([0,T];\mathbb{U}')}:= \sup\limits_{t\in[0,T]} \|\v(t)\|_{\mathbb{U}'}$, 
	\item $\mathrm{L}_w^2(0,T;\V):=$ the space $\mathrm{L}^2(0,T;\V)$ with the weak topology $\mathcal{T}_2$, 
	\item $\mathrm{L}_w^{r+1}(0,T;\wi\L^{r+1}):=$ the space $\mathrm{L}^{r+1}(0,T;\wi\L^{r+1})$ with the weak topology $\mathcal{T}_3$, 
	\item $\mathrm{L}^2(0,T;\H_{\mathrm{loc}}):=$ the space of measurable functions $\v:[0,T]\to\H$ such that for all $k\in\N$ 
    \begin{align*}
        \mathfrak{q}_{k,T};= \|\v\|_{\mathrm{L}^2(0,T;\H_{\mathcal{O}_k})}:= \left(\int_0^T \int_{\mathcal{O}_k} |\v(x,t)|^2 \d x \d t\right)^{\frac12} < + \infty,
    \end{align*}
    with the topology $\mathcal{T}_4$ generated by the seminorms $\{\mathfrak{q}_{k,T}\}_{k\in\N}$. 
\end{itemize}
Furthermore, we consider 
\begin{itemize}
	\item $\C([0,T];\H_w):=$ the space of weakly continuous functions $\v:[0,T]\to\H$ endowed with the weakest topology $\mathcal{T}_5$ such that for all $\boldsymbol{\psi}\in\H,$ the mapping $\C([0,T];\H_w)\ni\v\mapsto(\v(\cdot),\boldsymbol{\psi})\in \C([0,T];\R)$ is  continuous. In particular, $\v_n\to\v$ in $\C([0,T];\H_w)$ if and only if for all $\boldsymbol{\psi}\in\H: (\v_n(\cdot),\boldsymbol{\psi})\to (\v(\cdot),\boldsymbol{\psi})$ in the space $\C([0,T];\R)$. 
\end{itemize}
A proof of the following result can be obtained from \cite[Theorem 2]{EMo}, which is a generalization of \cite[Lemma 3.3]{ZBEM}. 
		\begin{lemma}[Compactness criterion]\label{thm3.7}
            Let 
			\begin{align}\label{eqn-intersection-spaces}
				\mathscr{Y}:=\C([0,T]; \mathbb{U}^{\prime} ) \cap\mathrm{L}_w^{2}(0,T;\V)\cap\mathrm{L}_w^{r+1}(0,T;\wi\L^{r+1})\cap \mathrm{L}^{2}(0,T;\H_{\text{\rm loc}}) \cap\C([0,T];\H_w)
			\end{align}
		and let $\mathcal{T}$ be the supremum of of the corresponding topologies. Then a set $\mathcal{K}\subset\mathscr{Y}$ is $\mathcal{T}$-relatively compact if the following four conditions hold:
		\begin{enumerate}
			\item $\sup\limits_{\v\in\mathcal{K}}\sup\limits_{t\in[0,T]}\|\v(t)\|_{\H}<\infty$, 
			\item $\sup\limits_{\v\in\mathcal{K}}\int_0^T\|\v(t)\|_{\V}^{2}\d t<\infty$, that is, $\mathcal{K}$ is bounded in $\mathrm{L}^{2}(0,T;\V)$, 
			\item $\sup\limits_{\v\in\mathcal{K}}\int_0^T\|\v(t)\|_{\wi\L^{r+1}}^{r+1}\d t<\infty$, that is, $\mathcal{K}$ is bounded in $\mathrm{L}^{r+1}(0,T;\wi\L^{r+1})$, 
	       \item   $\lim\limits_{\delta\to 0}\sup\limits_{\v\in\mathcal{K}} \sup\limits_{\substack{s,t\in [0,T], \\ |t-s|\leq \delta}} \|\v(t) - \v (s)\|_{\mathbb{U}^{\prime}}  =  0$.
		\end{enumerate}
		\end{lemma}
	It should be noted that the space $\mathscr{Y}$ is not a Polish space.

 \subsection{Tightness criterion}

Let $(\mathcal{E}, \uprho)$ be a separable and complete metric space.
\begin{definition}
    Let $\v\in \C([0,T];\mathcal{E})$. The modulus of continuity of $\v$ on $[0,T]$ is defined by 
    \begin{align*}
        \mathfrak{m} (\v,\delta):= \sup\limits_{\substack{s,t\in [0,T], \\ |t-s|\leq \delta}} \uprho(\v(t)- \v(s)), \;\; \delta>0.
    \end{align*}
\end{definition}

Let $(\Omega,\mathscr{F},\mathbb{P})$ be a probability space with the filtration $\{\mathscr{F}_t\}_{t\geq 0}$ satisfying the usual conditions. Let $\{\y_n\}_{n\in\N}$ be a sequence of continuous $\mathscr{F}_t$-adapted and $\mathcal{E}$-valued processes. 
\begin{definition}
	 The sequence $\{\y_n\}_{n\in\mathbb{N}}$ satisfies condition $\mathbf{[T]}$  in the space $\mathcal{E}$ if and only if 
	\begin{enumerate}
		\item [$\mathbf{[T]}$] for every $\e,\zeta>0$, there exists a $\delta>0$ such that $$\sup_{n\in\N}\mathbb{P}\left\{\mathfrak{m} (\y_n,\delta)>\zeta\right\}\leq \e.$$ 
	\end{enumerate}
\end{definition}
\begin{definition}
	A sequence $\{\y_n\}_{n\in\mathbb{N}}$ satisfies the \emph{Aldous condition} in the space $\mathcal{E}$ if and only if
	\begin{enumerate}
	\item [$\mathbf{[A]}$] for every $\e,\zeta>0$, there exists a $\delta>0$ such that for every sequence of $\{\mathscr{F}_t\}_{t\geq 0}$
    -adapted stopping times  $\{\tau_n\}_{n\in\mathbb{N}}$ with $\tau_n\leq T$, one has 
	$$\sup_{n\in\N}\sup_{0\leq\theta\leq\delta}\mathbb{P}\left\{\uprho(\y_n(\tau_n+\theta),\y_n(\tau_n))\geq\zeta\right\}\leq\e.$$
	\end{enumerate}
\end{definition}
A proof of the following result can be found in \cite[Theorem 2.2.2]{AJMM}. 
\begin{lemma}
	 The condition $\mathbf{[A]}$ implies condition $\mathbf{[T]}$.
\end{lemma}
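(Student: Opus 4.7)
The plan is to deduce tightness $\mathbf{[T]}$ from $\mathbf{[A]}$ by a path-wise stopping-time construction. For each sample path one builds a partition of $[0,T]$ via consecutive stopping times at which $\y_n$ has displaced by a small amount $\zeta/4$; on any subinterval of such a partition the oscillation of $\y_n$ is automatically $\leq \zeta/2$. The modulus $\mathcal{W}_{[0,T];\mathcal{E}}(\y_n,\delta)$ is then $\leq \zeta$ provided all gaps exceed $\delta$, and Condition $\mathbf{[A]}$ is precisely the tool needed to show that such a ``good partition'' event has probability at least $1-\varepsilon$.

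Fix $\varepsilon, \zeta > 0$ and set $K := \lceil T/\delta\rceil + 1$ for a $\delta>0$ to be specified. First, invoke $\mathbf{[A]}$ with tolerance $\zeta/4$ and target probability $\varepsilon/K$ to obtain a threshold $\delta_0>0$. Next, for each $n$ and each $\omega$, define recursively the $\mathscr{F}_t$-stopping times
\begin{equation*}
\tau_0^n := 0, \qquad \tau_{k+1}^n := \inf\set{t > \tau_k^n : d_1(\y_n(t),\y_n(\tau_k^n)) > \zeta/4} \wedge T;
\end{equation*}
by the c\`adl\`ag property this sequence reaches $T$ in finitely many steps. On the event $G_n := \bigcap_{k=0}^{K-1}\set{\tau_{k+1}^n - \tau_k^n \geq \delta}$ (with the convention that $\tau_{k+1}^n = T$ once $\tau_k^n = T$), the partition $\bar w = \set{\tau_0^n < \tau_1^n < \cdots < \tau_K^n = T}$ belongs to $\Pi_\delta$, and on each sub-interval $[\tau_k^n, \tau_{k+1}^n)$ the triangle inequality together with the definition of $\tau_{k+1}^n$ yields an oscillation of at most $\zeta/2$. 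Hence $\mathcal{W}_{[0,T];\mathcal{E}}(\y_n,\delta) \leq \zeta/2 < \zeta$ on $G_n$.

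It remains to choose $\delta \leq \delta_0$ small enough that $\mathbb{P}(G_n^c) \leq \varepsilon$ uniformly in $n$. Using the inclusion $G_n^c \subset \bigcup_{k=0}^{K-1}\set{\tau_{k+1}^n - \tau_k^n < \delta}$ and applying $\mathbf{[A]}$ to each bounded stopping time $\tau_k^n$ gives a per-term bound of $\varepsilon/K$, summing to $\varepsilon$ as required. The main technical obstacle is the passage from $\mathbf{[A]}$, which controls $\mathbb{P}\set{d_1(\y_n(\tau_n+\theta),\y_n(\tau_n)) \geq \zeta/4}$ only at a \emph{deterministic} $\theta \leq \delta$, to the genuinely supremum-in-$\theta$ event $\set{\tau_{k+1}^n - \tau_k^n < \delta}$. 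This is handled by a standard discretization: one replaces the continuous search over $\theta \in [0,\delta]$ by a finite grid $\set{j\delta/M : 0\leq j \leq M}$ with $M$ large, uses $\mathbf{[A]}$ at each grid point, and then exploits the c\`adl\`ag regularity of $\y_n$ to pass to the whole interval. The resulting factor of $M$ is absorbed by choosing $\delta_0$ (and hence $\varepsilon/K$) sufficiently small in the initial application of $\mathbf{[A]}$, which completes the deduction of $\mathbf{[T]}$.
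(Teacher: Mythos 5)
The paper does not prove this lemma at all; it cites Th\'eor\`eme 2.2.2 of Joffe--M\'etivier \cite{AJMM}, and your overall skeleton (recursive stopping times at which the path has displaced by $\zeta/4$, oscillation $\leq\zeta/2$ on each resulting subinterval by the triangle inequality, and a union bound over at most $K\approx T/\delta$ gaps) is exactly the classical Aldous/Joffe--M\'etivier strategy. The partition and oscillation parts of your argument are sound, modulo minor bookkeeping (the convention once $\tau_k^n=T$ should make the remaining gap conditions vacuous rather than violated, and the last subinterval must be merged so that the partition genuinely lies in $\Pi_\delta$).

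The genuine gap is in your treatment of what you correctly identify as the main technical obstacle. The event $\{\tau_{k+1}^n-\tau_k^n<\delta\}$ cannot be controlled by applying $\mathbf{[A]}$ on a finite grid $\{j\delta/M\}$ and then ``exploiting c\`adl\`ag regularity to pass to the whole interval'': a c\`adl\`ag path can jump above the threshold strictly between grid points and return before the next grid point, and right-continuity provides no modulus that is uniform in $\omega$ or in $n$, so no choice of $M$ closes this step. The correct device is different. First observe that, by right-continuity at the stopping time, $d_1(\y_n(\tau_{k+1}^n),\y_n(\tau_k^n))\geq\zeta/4$ on $\{\tau_{k+1}^n<T\}$, so the bad event is the \emph{two-stopping-time} event $\{\sigma\leq\tau\leq\sigma+\delta,\ d_1(\y_n(\tau),\y_n(\sigma))\geq\zeta/4\}$ with $\sigma=\tau_k^n$, $\tau=\tau_{k+1}^n$. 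One then averages over a deterministic lag: for every $\theta$, the triangle inequality forces $d_1(\y_n(\sigma+\theta),\y_n(\sigma))\geq\zeta/8$ or $d_1(\y_n(\sigma+\theta),\y_n(\tau))\geq\zeta/8$ on that event, and integrating $\tfrac{1}{2\delta}\int_0^{2\delta}(\cdot)\,\d\theta$, applying Fubini, re-parametrizing the second term as a forward increment from $\tau$, and invoking $\mathbf{[A]}$ (at the stopping times $\sigma$ and $\tau$ with deterministic lags) yields the required bound $\mathbb{P}\{\tau_{k+1}^n-\tau_k^n<\delta\}\leq\varepsilon/K$. Without this Fubini/averaging step, or some equivalent of it, the deduction of $\mathbf{[T]}$ from $\mathbf{[A]}$ is not complete.
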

The following result provides a condition which guarantees that the sequence $\{\y_n\}_{n\in\N}$ satisfies condition $\mathbf{[A]}$ (see \cite[Lemma 9]{EMo}). 

\begin{lemma}\label{lem3.11}
	Let $(\mathbb{X},\|\cdot\|_{\mathbb{X}})$ be a separable Banach space and let $\{\y_n\}_{n\in\N}$  be a sequence of $\X$-valued processes. Assume that for every $\{\tau_n\}_{n\in\N}$ of $\mathscr{F}_t$-stopping times with $\tau_n\leq T$ and for every $n\in\N$ and $\theta\geq 0$, the following holds: 
	\begin{align}\label{37}
		\E\left[\|\y_n(\tau_n+\theta)-\y_n(\tau_n)\|_{\X}^{\xi}\right]\leq C\theta^{\eta},
	\end{align}
for some $\xi,\eta>0$ and some constant $C > 0$. Then the sequence $\{\y_n\}_{n\in\N}$ satisfies the \emph{Aldous condition} in the space $\X$.
\end{lemma}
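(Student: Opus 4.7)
The plan is to deduce the Aldous condition $\mathbf{[A]}$ directly from hypothesis \eqref{37} by a straightforward application of the Chebyshev (Markov) inequality. The quantitative moment bound is designed precisely so that one can convert an $\mathrm{L}^{\xi}(\Omega)$-estimate on small time increments after stopping times into a probability tail estimate, uniformly in $n$.

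First, fix arbitrary $\varepsilon,\zeta>0$, a sequence of $\mathscr{F}_t$-stopping times $\{\tau_n\}_{n\in\N}$ with $\tau_n\leq T$, and $\theta\geq 0$. By Chebyshev's inequality applied to the non-negative random variable $\|\y_n(\tau_n+\theta)-\y_n(\tau_n)\|_{\X}^{\xi}$ together with the assumption \eqref{37}, one obtains
\begin{align*}
	\mathbb{P}\Bigl\{\|\y_n(\tau_n+\theta)-\y_n(\tau_n)\|_{\X}\geq\zeta\Bigr\}
	\leq \frac{\E\bigl[\|\y_n(\tau_n+\theta)-\y_n(\tau_n)\|_{\X}^{\xi}\bigr]}{\zeta^{\xi}}
	\leq \frac{C\,\theta^{\eta}}{\zeta^{\xi}}.
\end{align*}
Notice that the right-hand side is independent of $n$ and of the particular stopping time sequence $\{\tau_n\}$, so it may be upper bounded uniformly once $\theta$ is controlled.

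Next, to conclude, choose $\delta>0$ so small that $C\delta^{\eta}\zeta^{-\xi}\leq\varepsilon$, for instance
\begin{align*}
	\delta := \left(\frac{\varepsilon\,\zeta^{\xi}}{C}\right)^{1/\eta}.
\end{align*}
Then for every $0\leq\theta\leq\delta$ and every $n\in\N$, the displayed bound yields $\mathbb{P}\{\|\y_n(\tau_n+\theta)-\y_n(\tau_n)\|_{\X}\geq\zeta\}\leq\varepsilon$, which is exactly condition $\mathbf{[A]}$ of the Aldous criterion in the Banach space $(\X,\|\cdot\|_{\X})$.

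There is essentially no obstacle in the argument: the separability of $\X$ is used only implicitly (measurability of the norm and well-definedness of $\y_n(\tau_n+\theta)$), and the $\mathscr{F}_t$-adaptedness ensures that $\y_n(\tau_n+\theta)-\y_n(\tau_n)$ is a genuine random variable so that Chebyshev applies. The content of the lemma lies entirely in the hypothesis \eqref{37}; in practice, verifying \eqref{37} for the Galerkin approximants (via BDG-type inequalities for the stochastic integrals and H\"older estimates for the drift terms involving $\A$, $\B$ and $\mathcal{C}$) is the genuinely nontrivial step to which this lemma will be applied later in the paper.
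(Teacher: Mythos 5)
Your proof is correct: the Chebyshev--Markov argument followed by the explicit choice $\delta=(\varepsilon\zeta^{\xi}/C)^{1/\eta}$ is exactly the standard proof of this lemma, which the paper does not reproduce but instead cites from Lemma 9 of \cite{EMo}. The only minor point is that the displayed hypothesis \eqref{37} contains a typo ($\y_n$ in place of $\y_n(\tau_n)$), which you have interpreted in the intended way.
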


From the deterministic compactness criterion in Lemma \ref{thm3.7}, we deduce the following corollary, which will serve as a tool to establish the tightness of the laws associated with the Galerkin approximations. To begin, recall that $\mathbb{U}$ is a Hilbert space satisfying
\begin{align*}
    \mathbb{U} \hookrightarrow \V \hookrightarrow \H
\end{align*}
and the embedding $\mathbb{U} \hookrightarrow \V$ is dense and compact. Moreover, we consider the following space
\begin{align*}
    \mathscr{Y}:=\C([0,T]; \mathbb{U}^{\prime} ) \cap\mathrm{L}_w^{2}(0,T;\V)\cap
    \mathrm{L}_w^{r+1}(0,T;\wi\L^{r+1})\cap \mathrm{L}^{2}(0,T;\H_{\text{\rm loc}}) \cap\C([0,T];\H_w)
\end{align*}
equipped with the topology $\mathcal{T}$, see \eqref{eqn-intersection-spaces}.


\begin{corollary}\label{cor3.12}
    Let $\{\v_n\}_{n\in\N}$ be a sequence of continuous $\{\mathscr{F}_t\}_{t\geq 0}$-adapted $\mathbb{U}^{\prime}$-valued processes such that 
	\begin{enumerate}
		\item [(a)] there exists a positive constant $M_1$ such that 
		\begin{align*}
\sup_{n\in\N}\E\left[\sup_{t\in[0,T]}\|\v_n(t)\|_{\H}\right]\leq M_1,
		\end{align*}
	\item [(b)] there exists a positive constant $M_2$ such that 
	\begin{align*}
\sup_{n\in\N}\E\left[\int_0^T\|\v_n(t)\|_{\V}^2\d t\right]\leq M_2,
	\end{align*}
\item [(c)] there exists a positive constant $M_3$ such that 
\begin{align*}
\sup_{n\in\N}\E\left[\int_0^T\|\v_n(t)\|_{\wi\L^{r+1}}^{r+1}\d t\right]\leq M_3,
\end{align*}
\item [(d)] $\{\v_n\}_{n\in\N}$ satisfies the \emph{Aldous condition} in $\mathbb{U}^{\prime}$. 
	\end{enumerate}
Let $\mathbb{P}_n$ be the law of $\v_n$ on $\mathscr{Y}$. Then for every $\e>0,$ there exists a compact subset $\mathcal{K}_\e$ of $\mathscr{Y}$  such that
\begin{align*}
	\mathbb{P}_n(\mathcal{K}_{\e})\geq 1-\e. 
\end{align*}
\end{corollary}
\begin{proof}
    For the proof, we refer readers to \cite[Corollary 3.9]{ZBEM}.
\end{proof}

\subsection{The Skorokhod Theorem}
We first remember the following Jakubowski's version of the Skorokhod theorem: 
\begin{theorem}[{\cite[Theorem 2]{AJa}}]\label{thm-JvST}
	Let $(\mathscr{X},\mathcal{T})$ be a topological space such that there exists a sequence $\{f_k\}$ of continuous functions $f_k :\mathscr{X} \to\R$  that separates points	of $\mathscr{X}$. Let $\{X_n\}$  be a sequence of $\mathscr{X}$-valued random variables. Suppose that for every $\e > 0$, there exists a compact subset $K_\e\subset \mathscr{X}$ such that
	\begin{align*}
		\inf_{n\in\N}\mathbb{P}\left(\left\{X_n\in K_{\e}\right\}\right)>1-\e. 
	\end{align*}
Then there exist a subsequence $\{X_{n_k}\}_{k\in\N}$, a sequence $\{Y_k\}_{k\in\N}$ of $\mathscr{X}$-valued random variables and a $\mathscr{X}$-valued random variable $Y$ defined on some probability space $(\Omega,\mathscr{F},\mathbb{P})$ such that
\begin{align*}
	\mathscr{L}(X_{n_k})=\mathscr{L}(Y_k), \ k=1,2,\ldots, 
\end{align*}
and   
\begin{align*}
	Y_k\xrightarrow{\mathcal{T}} Y \quad \mathbb{P} \text{-a.e.}\quad  \text{ as }\ k\to\infty. 
\end{align*}
\end{theorem}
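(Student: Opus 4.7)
The plan is to reduce the statement to the classical Skorokhod representation theorem on a Polish space, using the separating sequence $\{f_k\}$ to embed $(\mathscr{X},\mathcal{T})$ into $\R^{\N}$. First I would define $\Phi:\mathscr{X}\to\R^{\N}$ by $\Phi(x):=(f_k(x))_{k\in\N}$ and record the following properties: $\Phi$ is continuous (each $f_k$ is) and injective (the $f_k$ separate points, which also forces $\mathscr{X}$ to be Hausdorff); the product space $\R^{\N}$ is a Polish space; and for every compact $K\subset\mathscr{X}$ the image $\Phi(K)$ is compact in $\R^{\N}$, and $\Phi|_K:K\to\Phi(K)$ is a continuous bijection from a compact space onto a Hausdorff space, hence a homeomorphism whose inverse is continuous on $\Phi(K)$.

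Next I would transfer the tightness to $\R^{\N}$ and invoke the classical theorem. The pushforward laws $\mu_n := \mathscr{L}_{\P}(\Phi(X_n))$ satisfy $\mu_n(\Phi(K_\e)) = \P(X_n\in K_\e) > 1-\e$, so $\{\mu_n\}$ is tight on $\R^{\N}$. By Prokhorov's theorem a subsequence $\mu_{n_k}$ converges weakly to some Borel probability $\mu$, and since $\R^{\N}$ is Polish the classical Skorokhod representation theorem yields, on a probability space $(\Omega,\mathscr{F},\P)$, $\R^{\N}$-valued random variables $\widetilde Z_k,\widetilde Z$ with $\mathscr{L}(\widetilde Z_k)=\mu_{n_k}$, $\mathscr{L}(\widetilde Z)=\mu$, and $\widetilde Z_k\to\widetilde Z$ $\P$-almost surely in the product topology. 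Setting $A := \bigcup_{m\geq 1}\Phi(K_{1/m})$, a Borel set as a countable union of compacts, the tightness together with the Portmanteau inequality for the closed sets $\Phi(K_{1/m})$ gives $\mu_n(A) = \mu(A) = 1$, so $\widetilde Z_k,\widetilde Z\in A$ almost surely. Defining $Y_k := \Phi^{-1}(\widetilde Z_k)$ and $Y := \Phi^{-1}(\widetilde Z)$ on $A$ (and arbitrarily on its null complement) produces $\mathscr{X}$-valued random variables with $\mathscr{L}(Y_k)=\mathscr{L}(X_{n_k})$, since $\Phi$ is a Borel isomorphism between $\mathscr{X}$ (equipped with the $\sigma$-field generated by $\{f_k\}$) and $\Phi(\mathscr{X})$.

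The main obstacle is the last step: upgrading the $\R^{\N}$-convergence $\widetilde Z_k\to\widetilde Z$ to $\mathcal{T}$-convergence $Y_k\to Y$, since the topology $\mathcal{T}$ can be strictly finer than the trace of the product topology under $\Phi$. The strategy is to arrange that, for $\P$-a.e.\ $\omega$, the whole sequence $\{\widetilde Z_k(\omega)\}$ together with its limit $\widetilde Z(\omega)$ lies in a common compact $\Phi(K)\subset\R^{\N}$; on that compact, the homeomorphism property of $\Phi^{-1}$ established in the first paragraph immediately forces $Y_k(\omega)\to Y(\omega)$ in $\mathcal{T}$. Producing such a common compact is the delicate point, and I would handle it by first selecting compacts $K_m := K_{2^{-m}}$ whose complement probabilities are summable, then passing to a diagonal subsequence and applying Borel-Cantelli to events of the form $\{\widetilde Z_k\notin\Phi(K_{m(k)})\}$ for a suitably chosen schedule $m(k)$, so as to trap the tail of $\{\widetilde Z_k(\omega)\}$ inside a nested increasing family of the $\Phi(K_m)$, and finally combining with $\widetilde Z_k\to\widetilde Z$ in $\R^{\N}$ to conclude that the sequence eventually sits in a single $\Phi(K_m)$ that also contains $\widetilde Z(\omega)$. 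A final redefinition on a $\P$-null set promotes the ``$\P$-a.s.'' convergence to ``for all $\omega\in\Omega$'', yielding the statement as written.
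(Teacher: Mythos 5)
The paper does not prove this statement at all: it is quoted verbatim as Theorem 2 of \cite{AJa} (Jakubowski's a.s.\ Skorokhod representation for subsequences in nonmetric spaces), so there is no internal proof to compare against. Judged on its own, your reduction via $\Phi(x)=(f_k(x))_{k\in\N}$ is the right frame, and the preparatory facts are all correct: $\Phi$ is a continuous injection into the Polish space $\R^{\N}$, $\Phi\vert_K$ is a homeomorphism onto its image for each compact $K$ (continuous bijection from a compact onto a Hausdorff space), the pushforward laws are tight, Prokhorov plus the classical Skorokhod theorem apply on $\R^{\N}$, and the limit law is concentrated on $A=\bigcup_m\Phi(K_{1/m})$ by Portmanteau. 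You also correctly isolate the crux: one must trap, for a.e.\ $\omega$, the tail of $\{\widetilde Z_k(\omega)\}$ together with $\widetilde Z(\omega)$ inside a \emph{single} $\Phi(K_m)$, because $\Phi^{-1}$ is continuous only on each $\Phi(K_m)$ separately, not on $A$.

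The gap is that your mechanism for producing this common compact does not work. With $K_m=K_{2^{-m}}$ you have $\mathbb{P}(\widetilde Z_k\notin\Phi(K_m))\leq 2^{-m}$ uniformly in $k$, so for \emph{fixed} $m$ the series $\sum_k\mathbb{P}(\widetilde Z_k\notin\Phi(K_m))$ diverges and Borel--Cantelli I gives nothing (and the events are not independent, so Borel--Cantelli II is useless too). Choosing a schedule $m(k)\to\infty$ with $\sum_k 2^{-m(k)}<\infty$ only yields that a.s.\ eventually $\widetilde Z_k(\omega)\in\Phi(K_{m(k)})$, i.e.\ the containing compact grows with $k$; this is strictly weaker than membership of the tail in one fixed $\Phi(K_m)$, and nothing in the product-topology convergence $\widetilde Z_k(\omega)\to\widetilde Z(\omega)$ forces the sequence back into the (closed but non-open) set $\Phi(K_{m(\omega)})$ containing the limit. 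This is exactly the point where Jakubowski's argument departs from ``push forward, apply classical Skorokhod, pull back'': he builds the representation directly on $\Omega=[0,1]$ by first extracting a subsequence along which the sub-measures $\mathscr{L}(\Phi(X_{n_k}))(\cdot\cap\Phi(K_m))$ converge for every $m$, and then partitioning $[0,1]$ into pieces on which the conditional laws are supported by a \emph{fixed} metrizable compact $\Phi(K_m)$; each $\omega$ is thereby assigned one compact in which the whole representing sequence eventually lives, and the classical representation theorem is invoked compact by compact. Without some construction of this type (or an equivalent device), the final step of your proof --- upgrading convergence in $\R^{\N}$ to convergence in $\mathcal{T}$ --- remains unproved, and it is the substantive content of the theorem.
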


\subsection{Convergences associated with nonlinear operators} The following convergence results will be used to establish the existence of martingale solutions by enabling us to pass to the limit in the finite-dimensional approximated system.
\begin{lemma}[{\cite[Corollary B.2]{ZBEM}}]\label{lem-convergence-B}
    Let $\v\in \mathrm{L}^2(0,T;\H)$ and $\{\v_n\}_{n\in\N}$ be a bounded sequence in $\mathrm{L}^2(0,T;\H)$ such that $\v_n\to \v$ in $\mathrm{L}^2(0,T;\H_{\mathrm{loc}})$. Then for all $t\in[0,T]$ and for all $\varphi\in\mathbb{U}$:
    \begin{align}
        \lim_{n\to \infty}  \int_0^t \langle\B({\v}_n(\tau)), \Pi_n\varphi\rangle\d \tau = \int_0^t \langle\B ({\v}(\tau)), \varphi\rangle\d \tau.
    \end{align}
\end{lemma}

\begin{lemma}\label{lem-convergence-C}
    Let $\v\in \mathrm{L}^2(0,T;\H)\cap\mathrm{L}^{r+1}(0,T;\wi\L^{r+1})$ and $\{\v_n\}_{n\in\N}$ be a bounded sequence in $\mathrm{L}^2(0,T;\H)\cap\mathrm{L}^{r+1}(0,T;\wi\L^{r+1})$ such that $\v_n\to \v$ in $\mathrm{L}^2(0,T;\H_{\mathrm{loc}})$. Then for all $t\in[0,T]$ and for all $\varphi\in\mathbb{U}$:
    \begin{align}
        \lim_{n\to \infty}  \int_0^t \langle\mathcal{C}({\v}_n(\tau)), \Pi_n\varphi\rangle\d \tau = \int_0^t \langle\mathcal{C} ({\v}(\tau)), \varphi\rangle\d \tau.
    \end{align}
\end{lemma}
\begin{proof}
Let us consider for $\varphi\in\mathbb{U}$
    \begin{align}
     &  \int_0^t [\langle\mathcal{C}({\v}_n(\tau)), \Pi_n\varphi\rangle - \langle\mathcal{C} ({\v}(\tau)), \varphi\rangle ]\d \tau 
     \nonumber\\ & = 
       \int_0^t \langle\mathcal{C}({\v}_n(\tau)), \Pi_n\varphi - \varphi\rangle \d \tau   +   \int_0^t \langle \mathcal{C}({\v}_n(\tau)) - \mathcal{C} ({\v}(\tau)), \varphi\rangle \d \tau
      =: \mathcal{C}_1^n (t)  +  \mathcal{C}_2^n (t).
    \end{align}
    Using Lemma \ref{lem-conv-Pi_m} and boundedness of sequence $\{\v_n\}_{n\in\N}$ in $\mathrm{L}^{r+1}(0,T;\wi\L^{r+1})$, we get
    \begin{align}
        |\mathcal{C}_1^n (t)|\leq  \|\Pi_n\varphi - \varphi\|_{\wi\L^{r+1}} \int_0^t \|\v_n(\tau)\|^r_{\wi\L^{r+1}}\d \tau  \to 0 \text{ as } n\to \infty. 
    \end{align}

 We first show that $|\mathcal{C}_2^n (t)|\to 0$ as $n\to\infty$ for all $\varphi\in\mathcal{V}$. Let us choose $\varphi\in\mathcal{V}$ and fix it. Then there exists  $R>0$ such that $\mathrm{supp}\;\varphi$ is a compact subset of $\mathcal{O}_{R}$. An application of the Mean Value Theorem and H\"older's inequality (see \cite[Subsection 2.4, pp. 8]{Gautam+Mohan_2025}) yields 
\begin{align}\label{eqn-312}
	& \hspace{-10mm} \left|\int_0^t \langle \mathcal{C}({\v}_n(\tau)) - \mathcal{C} ({\v}(\tau)), \varphi\rangle \d \tau \right|  \nonumber\\ &  \hspace{-10mm}
    \leq r  \|\varphi\|_{{\L}^{\infty}(\mathcal{O}_{R})} \int_0^t \left(\|\v_n(\tau)\|_{{\L}^{r+1}(\mathcal{O}_{R})} +\|\v(\tau)\|_{{\L}^{r+1}(\mathcal{O}_{R})}\right)^{r-1}\|\v_n(\tau) -\v(\tau)\|_{{\L}^{\frac{r+1}{2}}(\mathcal{O}_{R})} \d \tau 
    \nonumber\\ & \hspace{-10mm} \leq r \|\varphi\|_{{\L}^{\infty}(\mathcal{O}_{R})}
    \begin{cases}
     |\mathcal{O}_R|^{\frac{3-r}{2(r+1)}} \int_0^t  \left(\|\v_n(\tau)\|_{{\wi\L}^{r+1}} +\|\v(\tau)\|_{{\wi\L}^{r+1}}\right)^{r-1}\|\v_n(\tau) -\v(\tau)\|_{{\L}^{{2}}(\mathcal{O}_{R})} \d \tau \vspace{2mm}
     \\
   & \vspace{2mm} \hspace{-40mm} \text{ for } r\in [1,3),
     \\
    \int_0^t \left(\|\v_n(\tau)\|_{{\wi\L}^{r+1}} +\|\v(\tau)\|_{{\wi\L}^{r+1}}\right)^{r-1}\|\v_n(\tau) -\v(\tau)\|^{\frac{2}{r-1}}_{{\L}^{2}(\mathcal{O}_{R})} \|\v_n(\tau) -\v(\tau)\|^{\frac{r-3}{r-1}}_{{\wi\L}^{{r+1}}} \d \tau  \vspace{2mm} \\
    & \vspace{2mm}  \hspace{-40mm} \text{ for } r\geq3,
    \end{cases}
    \nonumber\\ & \hspace{-10mm} \to 0 \text{ as } n\to \infty.
\end{align}
If $\varphi\in {\mathbb{U}}$, then for every $\delta>0,$ there exists $\varphi_{\delta}\in \mathcal{V}$ such that $\|\varphi - \varphi_{\delta}\|_{\wi\L^{\frac{r+1}{r}}}\leq \delta$. Therefore, we have 
\begin{align}
   & \left| \int_0^t \langle \mathcal{C}({\v}_n(\tau)) - \mathcal{C} ({\v}(\tau)), \varphi\rangle \d \tau \right| 
   \nonumber\\ & \leq \left| \int_0^t \langle \mathcal{C}({\v}_n(\tau)) - \mathcal{C} ({\v}(\tau)), \varphi - \varphi_{\delta} \rangle \d \tau \right| + \left| \int_0^t \langle \mathcal{C}({\v}_n(\tau)) - \mathcal{C} ({\v}(\tau)), \varphi_{\delta}\rangle \d \tau \right|
   \nonumber\\ & \leq \left[ \int_0^t \|{\v}_n(\tau)\|_{\wi\L^{r+1}}^{r} + \|{\v}(\tau)\|_{\wi\L^{r+1}}^{r} \right] \| \varphi - \varphi_{\delta} \|_{\wi\L^{\frac{r+1}{r}}}  + \left| \int_0^t \langle \mathcal{C}({\v}_n(\tau)) - \mathcal{C} ({\v}(\tau)), \varphi_{\delta}\rangle \d \tau \right|
   \nonumber\\ & \leq K \delta   + \left| \int_0^t \langle \mathcal{C}({\v}_n(\tau)) - \mathcal{C} ({\v}(\tau)), \varphi_{\delta}\rangle \d \tau \right|,
\end{align}
where $K$ is a constant independent of $\delta$ and $n$. Passing to upper limit as $n\to \infty$ and using \eqref{eqn-312}, we find
\begin{align}
   & \limsup_{n\to\infty}\left| \int_0^t \langle \mathcal{C}({\v}_n(\tau)) - \mathcal{C} ({\v}(\tau)), \varphi\rangle \d \tau \right| \leq K \delta. 
\end{align}
Since, $\delta>0$ is arbitrary, we obtain that $|\mathcal{C}_2^n (t)|\to 0$ as $n\to\infty$ for all $\varphi\in\mathbb{U}$. Hence, the proof is completed.
\end{proof}

The following result can be obtained by arguing analogously to the proofs of Lemmas \ref{lem-convergence-B}–\ref{lem-convergence-C}.
\begin{corollary}\label{cor-convergence-B+C}
    Let $\v\in \mathrm{L}^2(0,T;\H)\cap\mathrm{L}^{2}(0,T;\V)\cap\mathrm{L}^{r+1}(0,T;\wi\L^{r+1})$ and $\{\v_n\}_{n\in\N}$ be a bounded sequence in $\mathrm{L}^2(0,T;\H)\cap\mathrm{L}^{2}(0,T;\V)\cap\mathrm{L}^{r+1}(0,T;\wi\L^{r+1})$ such that $\v_n\to \v$ in $\mathrm{L}^2(0,T;\H_{\mathrm{loc}})$. Then for all $t\in[0,T]$ and for all $\varphi\in\V\cap\wi\L^{r+1}$:
    \begin{align}
        \lim_{n\to \infty}  \int_0^t \langle\B({\v}_n(\tau)),  \varphi\rangle\d \tau = \int_0^t \langle\B ({\v}(\tau)), \varphi\rangle\d \tau,
    \end{align}
    and
    \begin{align}
        \lim_{n\to \infty}  \int_0^t \langle\mathcal{C}({\v}_n(\tau)),  \varphi\rangle\d \tau = \int_0^t \langle\mathcal{C} ({\v}(\tau)), \varphi\rangle\d \tau.
    \end{align}
\end{corollary}

\section{Existence of a Weak Martingale Solution} \label{sec4}\setcounter{equation}{0} In this section, we prove Theorem \ref{thm3.4} using the classical Faedo-Galerkin approximation, a stochastic compactness method,   Jakubowski's version of the Skorokhod theorem for nonmetric spaces and martingale representation theorem. 
\subsection{Faedo-Galerkin approximation} Let us define $\mathrm{A}^n=\Pi_n\A$, $\B^n(\u_n)=\Pi_n\B(\u_n)$, $\mathcal{C}^n(\u_n)=\Pi_n\mathcal{C}(\u_n)$, $\boldsymbol{\sigma}^n(\cdot,\u_n)=\Pi_n\boldsymbol{\sigma}(\cdot,\u_n)$  and $\W^n=\Pi_n\W$, where $\Pi_n$ is the projection operator from $\mathbb{U}^{\prime}$ to $\H_n$ defined in Subsection \ref{C_O}. For each $n\in\N$, we search for approximate solutions of the form 
\begin{align}
	\u_n(x,t)=\sum_{k=1}^n g^n_k(t)\boldsymbol{e}_k(x), \;\; \boldsymbol{e}_k\in\H_n,
\end{align} 
where $\{\boldsymbol{e}_k\}_{k\in\N}$ is defined in \eqref{L3} and  the coefficients $g^n_1,\ldots,g^n_n$ are solutions of the following $n$ stochastic ordinary differential equations:
\begin{equation}\label{eqn-finite-dim-system}
\left\{
\begin{aligned}
\d\left(\u_n(t),\boldsymbol{e}_j\right)&=-\langle\mu \A^n\u_n(t)+\B^n(\u_n(t))+ \alpha\u_n(t)+\beta\mathcal{C}^n(\u_n(t)) - \mathbf{F}^n(t),\boldsymbol{e}_j\rangle\d t
\\ & \quad +\left(\boldsymbol{\sigma}^n(t,\u_n(t))\d \W^n(t),\boldsymbol{e}_j\right),\\
(\u_n(0),\boldsymbol{e}_j)&=(\u_0^n,\boldsymbol{e}_j),
\end{aligned}
\right.
\end{equation}
for all $0\leq t \leq T$ and $j=1,\ldots,n$, where $\u_0^n=\Pi_n\u_0$ and $\mathbf{F}^n=\Pi_n\mathbf{F}$. Since $\B^n(\cdot)$ and $\mathcal{C}^n(\cdot)$ are  locally Lipschitz (see \eqref{lip} and \eqref{213}), and  $\boldsymbol{\sigma}^n(\cdot,\u_n)$  is globally Lipschitz, the system \eqref{eqn-finite-dim-system} has a unique $\H_n$-valued local strong solution $\u_n(\cdot)$ and $\u_n\in
\mathrm{L}^2(\Omega;\C([0,T^*_n];\H_n))$ with
$\{\mathscr{F}_t\}_{t\geq 0}$-adapted continuous sample paths {(see  \cite{chow})}.  Now we discuss the a-priori energy estimates satisfied by the solution to the system \eqref{eqn-finite-dim-system}, which also implies  that $T^*_n$ can be extended to $T$.

\begin{proposition}[Energy estimates]\label{prop1}
Let $p\geq 1$. Under Hypothesis \ref{hyp}, let $\u_n(\cdot)$ be the unique solution of the system of stochastic	ODEs \eqref{eqn-finite-dim-system} with $\u_0\in \H$ and $\mathbf{F}\in \mathrm{L}^{2}(0,T;\V^{\prime})$. Then, we have 
	\begin{align}\label{ener1}
		& \sup_{n\geq 1} \E \left[ \sup_{t\in[0,T]} \|\u^n(t)\|^{2p}_{\H}+ p\int_{0}^{T}\|\u^n(s)\|_{\H}^{2p-2}\left(\mu\|\nabla\u^n(s)\|^2_{\H}+\alpha\|\u^n(s)\|^2_{\H}+2\beta\|\u^n(s)\|^{r+1}_{\wi\L^{r+1}}\right)\d s\right] \nonumber\\&
		\leq C\left( \|\u_0\|_{\H},  \|\mathbf{F}\|_{\mathrm{L}^{2}(0,T;\V^{\prime})}, \mu,\alpha, p , K ,T\right).
	\end{align}
\end{proposition}

\begin{proof}
		\noindent\textbf{Step (1):} Let us first define a sequence of stopping times $\tau_N$ by
	\begin{align}\label{stopm}
		\tau_N:=\inf_{t\geq 0}\left\{t:\|\u^n(t)\|_{\H}\geq N\right\},
	\end{align}
	for $N\in\mathbb{N}$. Applying the finite-dimensional It\^{o} formula to the process
	$\|\u^n(\cdot)\|_{\H}^2$, we obtain for all $t\in[0,T]$, $\mathbb{P}$-a.s.,
	\begin{align}\label{4.99}
		&	\|\u^n(\t)\|_{\H}^2
		  \nonumber\\ &=
		\|\u^n(0)\|_{\H}^2-2\int_0^{\t}\langle\mu \A\u^n(s)+\B^n(\u^n(s))+\alpha\u^n(s)+\beta\mathcal{C}^n(\u^n(s)) - \mathbf{F}^n(s),\u^n(s)\rangle\d s
		\nonumber\\&\quad+\int_0^{\t}\|\upsigma^n(s,\u^n(s))\|^2_{\mathcal{L}_{\Q}}\d
		s 
		+2\int_0^{\t}\left(\upsigma^n(s,\u^n(s))\d\W^n(s),\u^n(s)\right).
	\end{align}
	Note that $\langle\B^n(\u^n),\u^n\rangle=\langle\B(\u^n),\u^n\rangle=0$. 
	Taking expectation in \eqref{4.99}, and using the Cauchy-Schwarz and Young inequalities, and Hypothesis \ref{hyp}  (H.2), we find
	\begin{align}\label{4.13}
		&\E\left[\|\u^n(\t)\|_{\H}^2 + \mu \int_0^{\t} \|\nabla\u^n(s)\|_{\H}^2\d s + \alpha \int_0^{\t} \|\u^n(s)\|_{\H}^2\d s+2\beta\int_0^{\t} \|\u^n(s)\|_{\widetilde{\L}^{r+1}}^{r+1}\d s\right]
        \nonumber\\&\leq
		\E\left[	\|\u^n(0)\|_{\H}^2\right]+\E\left[\int_0^{\t}\|\upsigma^n(s,\u^n(s))\|^2_{\mathcal{L}_{\Q}}\d s \right]
         + \frac{1}{\min\{\mu,\alpha\}}\E\left[\int_0^{\t}\|\mathbf{F}(s)\|^2_{\V^{\prime}} \d
		s \right]
        \nonumber\\&\leq 	\|\u_0\|_{\H}^2 +K\E\left[\int_0^{\t}\left[1+\|\u^n(s)\|^2_{\H}\right]\d
		s \right]  + \frac{1}{\min\{\mu,\alpha\}}\int_0^{T}\|\mathbf{F}(s)\|^2_{\V^{\prime}} \d s
		\nonumber\\&\leq 	\|\u_0\|_{\H}^2 + \frac{1}{\min\{\mu,\alpha\}}\int_0^{T}\|\mathbf{F}(s)\|^2_{\V^{\prime}} \d s + KT + K\int_0^{t}\E\left[\|\u^n(s\land \tau_N)\|^2_{\H} \right]\d s
        \nonumber\\&\leq 	\|\u_0\|_{\H}^2 + \frac{1}{\min\{\mu,\alpha\}}\int_0^{T}\|\mathbf{F}(s)\|^2_{\V^{\prime}} \d s + KT 
          + K\int_0^{t}\E\left[\|\u^n(s\land \tau_N)\|_{\H}^2 + \mu \int_0^{s\land \tau_N} \|\nabla\u^n(\tau)\|_{\H}^2\d \tau 
          \right. 
          \nonumber\\ & \quad \left. + \alpha \int_0^{s\land \tau_N} \|\u^n(\tau)\|_{\H}^2\d \tau + 2\beta\int_0^{s\land \tau_N} \|\u^n(\tau)\|_{\widetilde{\L}^{r+1}}^{r+1}\d \tau \right]\d s,
	\end{align}
	where we have used the fact that final term in the right hand side of the equality \eqref{4.99} is a martingale with zero expectation. Applying Gr\"onwall's inequality in (\ref{4.13}), we get 
	\begin{align}\label{2.7}
		& \E\left[\|\u^n(\t)\|_{\H}^2 + \mu \int_0^{\t} \|\nabla\u^n(s)\|_{\H}^2\d s + \alpha \int_0^{\t} \|\u^n(s)\|_{\H}^2\d s+2\beta\int_0^{\t} \|\u^n(s)\|_{\widetilde{\L}^{r+1}}^{r+1}\d s\right]
        \nonumber\\ & \leq
		\left( \|\u_0\|_{\H}^2  +  \frac{1}{\min\{\mu,\alpha\}}\int_0^{T}\|\mathbf{F}(s)\|^2_{\V^{\prime}} \d s +KT\right)e^{KT},
	\end{align}
	for all $0\leq t \leq T$.  Note that for the indicator function $\chi$, 
	$$\E\left[{\chi}_{\{\tau_N^n<t\}}\right]=\mathbb{P}\Big\{\omega\in\Omega:\tau_N^n(\omega)<t\Big\},$$
	and using \eqref{stopm}, we obtain 
	\begin{align}\label{sq11}
		\E\left[\|\u^n(\t)\|_{\H}^2\right]&=
		\E\left[\|\u^n(\tau_N^n)\|_{\H}^2{\chi}_{\{\tau_N^n<t\}}\right]+\E\left[\|\u^n(t)\|_{\H}^2
		{\chi}_{\{\tau_N^n\geq t\}}\right]\nonumber\\&\geq
		\E\left[\|\u^n(\tau_N^n)\|_{\H}^2{\chi}_{\{\tau_N^n<t\}}\right]\geq
		N^2\mathbb{P}\Big\{\omega\in\Omega:\tau_N^n<t\Big\}.
	\end{align}
	Using the energy estimate (\ref{2.7}), we find 
	\begin{align}\label{sq12}
		\mathbb{P}\Big\{\omega\in\Omega:\tau_N<t\Big\}&\leq
		\frac{1}{N^2}\E\left[\|\u^n(\t)\|_{\H}^2\right]
        \nonumber\\ & \leq
		\frac{1}{N^2}
		\left( \|\u_0\|_{\H}^2  +  \frac{1}{\min\{\mu,\alpha\}}\int_0^{T}\|\mathbf{F}(s)\|^2_{\V^{\prime}} \d s +KT\right)e^{KT}.
	\end{align}
	Hence, we have
	\begin{align}\label{sq13}
		\lim_{N\to\infty}\mathbb{P}\Big\{\omega\in\Omega:\tau_N<t\Big\}=0, \ \textrm{
			for all }\ 0\leq t \leq T,
	\end{align}
	and $\t\to t$ as $N\to\infty$. 
	Taking limit $N\to\infty$ in
	(\ref{2.7}) and using the \emph{monotone convergence theorem}, we get 
	\begin{align}\label{4.16}
		& \E\left[\|\u^n(\t)\|_{\H}^2 + \mu \int_0^{\t} \|\nabla\u^n(s)\|_{\H}^2\d s + \alpha \int_0^{\t} \|\u^n(s)\|_{\H}^2\d s+2\beta\int_0^{\t} \|\u^n(s)\|_{\widetilde{\L}^{r+1}}^{r+1}\d s\right]
        \nonumber\\ & \leq
		\left( \|\u_0\|_{\H}^2  +  \frac{1}{\min\{\mu,\alpha\}}\int_0^{T}\|\mathbf{F}(s)\|^2_{\V^{\prime}} \d s +KT\right)e^{KT},
	\end{align}
	for $0\leq t\leq T$. Substituting (\ref{4.16}) in (\ref{4.13}), we arrive at
	\begin{align}\label{4.16a}
		&	\E\left[\|\u^n(t)\|_{\H}^2 + \mu \int_0^{t}\|\nabla\u^n(s)\|_{\H}^2\d s + \alpha \int_0^{t}\|\u^n(s)\|_{\H}^2\d s +2\beta\int_0^t\|\u^n(s)\|_{\widetilde{\L}^{r+1}}^{r+1}\d s\right]
		\nonumber\\ & \leq
		\left( \|\u_0\|_{\H}^2  +  \frac{1}{\min\{\mu,\alpha\}}\int_0^{T}\|\mathbf{F}(s)\|^2_{\V^{\prime}} \d s +KT\right)e^{KT},
	\end{align}
	for all $0\leq t \leq T$. 
	\vskip 0.2cm
	\noindent\textbf{Step (2):} \textit{Proof of \eqref{ener1}}.	An application of thr finite-dimensional It\^o formula to the process $\|\u^n(\cdot)\|^{2p}_{\H}$, and the Cauchy-Schwarz and Young inequalities yields
	\begin{align}\label{S4}
		&\|\u^n(\t)\|^{2p}_{\H}+ 2p\int_{0}^{\t}\|\u^n(s)\|_{\H}^{2p-2}\left(\mu\|\nabla\u^n(s)\|^2_{\H}+\alpha\|\u^n(s)\|^2_{\H}+\beta\|\u^n(s)\|^{r+1}_{\wi\L^{r+1}}\right)\d s
        \nonumber\\&
		=\underbrace{ \|\Pi_n\u_0\|^{2p}_{\H}}_{\leq \|\u_0\|^{2p}_{\H}} +  p \int_{0}^{\t}\|\u^n( s)\|_{\H}^{2p-2}\|\upsigma^n(s,\u^n(s))\|^2_{\mathcal{L}_{\Q}} \d s
		\nonumber\\ & \quad 
        +  2 p\int_{0}^{\t}\|\u^n( s)\|_{\H}^{2p-2}\left(\upsigma^n(s,\u^n(s))\d\W^n(s) , \u^n( s)\right) 
        +  2 p\int_{0}^{\t}\|\u^n( s)\|_{\H}^{2p-2}\left<\mathbf{F}(s) , \u^n( s)\right> \d s
        \nonumber\\&\quad+ 2 p(p-1) \int_{0}^{\t}\|\u^n( s)\|^{2p-4}_{\H} \|(\upsigma^n(s,\u^n(s)))^{*}\u^n( s)\|^2_{\H}\d s
        \nonumber\\&
		\leq \|\u_0\|^{2p}_{\H} +  p \int_{0}^{\t}\|\u^n( s)\|_{\H}^{2p-2}\|\upsigma^n(s,\u^n(s))\|^2_{\mathcal{L}_{\Q}} \d s
		\nonumber\\ & \quad 
        +  2 p\int_{0}^{\t}\|\u^n( s)\|_{\H}^{2p-2}\left(\upsigma^n(s,\u^n(s))\d\W^n(s) , \u^n( s)\right) 
        \nonumber\\ &\quad  +  2 p\int_{0}^{\t}\|\u^n( s)\|_{\H}^{2p-2}\|\mathbf{F}(s)\|_{\V^\prime} \|\u^n( s)\|_{\V} \d s
        \nonumber\\&\quad+ 2 p(p-1) \int_{0}^{\t}\|\u^n( s)\|^{2p-4}_{\H} \|(\upsigma^n(s,\u^n(s)))^{*}\u^n( s)\|^2_{\H}\d s
        \nonumber\\&
		\leq \|\u_0\|^{2p}_{\H} +  p \int_{0}^{\t}\|\u^n( s)\|_{\H}^{2p-2}\|\upsigma^n(s,\u^n(s))\|^2_{\mathcal{L}_{\Q}} \d s
		\nonumber\\ & \quad 
        +  2 p\int_{0}^{\t}\|\u^n( s)\|_{\H}^{2p-2}\left(\upsigma^n(s,\u^n(s))\d\W^n(s) , \u^n( s)\right) 
         \nonumber\\ & \quad +   p\int_{0}^{\t}\|\u^n( s)\|_{\H}^{2p-2}\left\{ \frac{1}{\min\{\mu,\alpha\}}\|\mathbf{F}(s)\|^2_{\V^\prime} +  \min\{\mu,\alpha\}\|\u^n( s)\|^2_{\V} \right\}\d s
        \nonumber\\&\quad+ 2 p(p-1) \int_{0}^{\t}\|\u^n( s)\|^{2p-4}_{\H} \|(\upsigma^n(s,\u^n(s)))^{*}\u^n( s)\|^2_{\H}\d s.
	\end{align}
	 Taking supremum from $0$
	to $\T$ before taking expectation in \eqref{S4}, we obtain
	\begin{align}\label{S44}
		&\E \left[ \sup_{t\in[0,\T]} \|\u^n(t)\|^{2p}_{\H}+ p\int_{0}^{\T}\|\u^n(s)\|_{\H}^{2p-2}\left(\mu\|\nabla\u^n(s)\|^2_{\H}+\alpha\|\u^n(s)\|^2_{\H}+2\beta\|\u^n(s)\|^{r+1}_{\wi\L^{r+1}}\right)\d s\right]\nonumber\\&
		\leq  \|\u_0\|^{2p}_{\H}  +  p \E \left[\int_{0}^{\T}\|\u^n( s)\|_{\H}^{2p-2}\|\upsigma^n(s,\u^n(s))\|^2_{\mathcal{L}_{\Q}} \d s\right]
		 \nonumber\\&\quad+ 2 p(p-1) \E \left[\int_{0}^{\T}\|\u^n( s)\|^{2p-4}_{\H} \|(\upsigma^n(s,\u^n(s)))^{*}\u^n( s)\|^2_{\H}\d s\right]
		 \nonumber\\ & \quad +  2 p \E\left[ \sup_{t\in[0,\T]} \left|\int_{0}^{t}\|\u^n( s)\|_{\H}^{2p-2}\left(\upsigma^n(s,\u^n(s))\d\W^n(s) , \u^n( s)\right)\right| \right]
         \nonumber\\ & \quad + \frac{p}{\min\{\mu,\alpha\}} \E \left[ \int_{0}^{\T}\|\u^n( s)\|_{\H}^{2p-2} \|\mathbf{F}(s)\|^2_{\V^\prime} \d s \right]
		 \nonumber\\ & =    \|\u_0\|^{2p}_{\H} +\sum_{i=1}^{4}Q_{i}(t,n).
	\end{align}
	Let us now estimate each term of \eqref{S44} separately. Using Hypothesis \ref{hyp} (H.2), we get 
	\begin{align}\label{S5}
		| Q_1(t,n) + Q_2(t,n) | \leq C \E \left[ \int_{0}^{\T} \left\{ 1+ \|\u^n( s)\|_{\H}^{2p}  \right\}\d s \right].
	\end{align}
	Using Burkholder-Davis-Gundy (in short BDG) inequality, (\cite[Theorem 1]{BD} and \cite[Theorem 1.1]{DLB}), Hypothesis \ref{hyp} (H.2), and H\"{o}lder's  and Young's inequalities, we have
	\begin{align}\label{S55}
		\left|Q_3(s,n)\right|  
		& \leq C \mathbb{E}\left[\left(\int_{0}^{\T}\|\u^n( s)\|^{4p-4}_{\H} \|(\upsigma^n(s,\u^n(s)))^{*}\u^n( s)\|^2_{\H}\d s\right)^{\frac{1}{2}}\right]
		\nonumber\\ & \leq C \mathbb{E}\left[\left(\int_{0}^{\T}\|\u^n( s)\|^{4p-2}_{\H} \|(\upsigma^n(s,\u^n(s)))^{*}\|^2_{\mathcal{L}_{\Q}}\d s\right)^{\frac{1}{2}}\right] 
		\nonumber\\ & \leq C \mathbb{E}\left[\left(\int_{0}^{\T}\|\u^n( s)\|^{4p-2}_{\H} \left\{1+ \|\u^n( s)\|^{2}_{\H}\right\}\d s\right)^{\frac{1}{2}}\right] 
		\nonumber\\ & \leq C \mathbb{E}\left[\sup_{t\in[0,\T]}\|\u^n(t)\|^{p}_{\H} \left(\int_{0}^{\T}\|\u^n( s)\|^{2p-2}_{\H} \left\{1+ \|\u^n( s)\|^{2}_{\H}\right\}\d s\right)^{\frac{1}{2}}\right]
		\nonumber\\ & \leq  \mathbb{E}\left[\frac14\sup_{t\in[0,\T]}\|\u^n(t)\|^{2p}_{\H} \right] + C \mathbb{E}\left[ \int_{0}^{\T} \left\{1+ \|\u^n( s)\|^{2p}_{\H}\right\}\d s \right].
	\end{align}
    Using H\"older's inequality, we find 
    \begin{align}\label{S555}
      |Q_4(t,n)| & \leq \frac{p}{\min\{\mu,\alpha\}} \E \left[ \sup_{t\in[0,\T]}  \|\u^n( s)\|_{\H}^{2p-2}   \int_{0}^{T} \|\mathbf{F}(s)\|^2_{\V^\prime} \d s \right]
      \nonumber\\ & \leq  \mathbb{E}\left[\frac14\sup_{t\in[0,\T]}\|\u^n(t)\|^{2p}_{\H} \right] + C\|\mathbf{F}\|^{2p}_{\mathrm{L}^2(0,T;\V^{\prime})}.
    \end{align}
	By combining \eqref{S44}-\eqref{S555}, we reach at
	\begin{align}\label{S66}
		&\E \left[ \frac12 \sup_{t\in[0,\T]} \|\u^n(t)\|^{2p}_{\H}+ p\int_{0}^{\T}   \|\u^n(s)\|_{\H}^{2p-2}\left(\mu\|\nabla\u^n(s)\|^2_{\H}+\alpha\|\u^n(s)\|^2_{\H}+2\beta\|\u^n(s)\|^{r+1}_{\wi\L^{r+1}}\right)\d s\right]\nonumber\\&
		\leq  \|\u_0\|^{2p}_{\H} + C\|\mathbf{F}\|^{2p}_{\mathrm{L}^2(0,T;\V^{\prime})} + CT + C \mathbb{E}\left[ \int_{0}^{\T}  \|\u^n( s)\|^{2p}_{\H} \d s \right].
	\end{align}
Applying Gr\"onwall's inequality in \eqref{S66}, we obtain 
\begin{align}\label{4.25}
	& \E \left[ \sup_{t\in[0,\T]} \|\u^n(t)\|^{2p}_{\H}+ p\int_{0}^{\T}\|\u^n(s)\|_{\H}^{2p-2}\left(\mu\|\nabla\u^n(s)\|^2_{\H}+\alpha\|\u^n(s)\|^2_{\H}+2\beta\|\u^n(s)\|^{r+1}_{\wi\L^{r+1}}\right)\d s\right]
    \nonumber\\ & \leq\left(2  \|\u_0\|_{\H}^{2p} + C\|\mathbf{F}\|^{2p}_{\mathrm{L}^2(0,T;\V^{\prime})} +CT\right)e^{CT}.
\end{align}
Now passing $N\to\infty$ in \eqref{4.25} and using the monotone convergence theorem, we finally obtain \eqref{ener1}. 
\end{proof}


\begin{remark}
    Applying the finite-dimensional It\^{o} formula to the process
	$\|\u_n(\cdot)\|_{\H}^2$, we obtain 
	\begin{align}\label{4.9}
	&	\|\u_n(t)\|_{\H}^2
        \nonumber\\&=
		\|\u_0^n\|_{\H}^2-2\mu\int_0^{t}\|\nabla\u_n(s)\|_{\H}^2\d s - 2\alpha \int_0^{t}\|\u_n(s)\|_{\H}^2\d s -2\beta\int_0^{t}\|\u_n(s)\|_{\wi\L^{r+1}}^{r+1}\d s \nonumber\\&\quad + 2 \int_0^t\left\langle \mathbf{F}(s),\u_n(s) \right\rangle \d s
        +2\int_0^{t}\left(\boldsymbol{\sigma}^n(s,\u_n(s))\d\W^n(s),\u_n(s)\right)
		+\int_0^{t}\|\boldsymbol{\sigma}^n(s,\u_n(s))\|^2_{\mathcal{L}_{\Q}}\d
		s,
	\end{align}
	where we have used the fact that $\langle\B^n(\u_n),\u_n\rangle=\langle\B(\u_n),\u_n\rangle=0$. 
    
	In \eqref{4.9},  we estimate $\int_0^t\left\langle \mathbf{F}^n(s),\u_n(s) \right\rangle \d s$ using H\"older's and Young's inequalities, take supremum over $0$ to $T$,   raise to the power $p=\frac{q}{2}$ and then take expectation to obtain 
	\begin{align}\label{4p34}
		&  \mu^{\frac{q}{2}}\E\left[\left(\int_0^{T}\|\nabla\u_n(t)\|_{\H}^2\d t\right)^{\frac{q}{2}}\right] 
        +2^{\frac{q}{2}}\beta^{\frac{q}{2}}\E\left[\left(\int_0^{T}\|\u_n(t)\|_{\wi\L^{r+1}}^{r+1}\d t\right)^{\frac{q}{2}}\right]\nonumber\\&\leq C_q\Bigg\{ \|\u_0\|_{\H}^{q} + \left( \int_0^T\|\mathbf{F}(s)\|^2_{\V^{\prime}}\d s \right)^{\frac{q}{2}}
		+\E\left[\left(\int_0^{T}\|\boldsymbol{\sigma}^n(t,\u_n(t))\|^2_{\mathcal{L}_{\Q}}\d
		t \right)^{\frac{q}{2}}\right] \nonumber\\&\quad+\E\left[\sup_{t\in[0,T]}\left|\int_0^{t}\left(\boldsymbol{\sigma}^n(s,\u_n(s))\d\W^n(s),\u_n(s)\right)\right|^{\frac{q}{2}}\right]\Bigg\}=:C_q\sum_{i=1}^4 J_i. 
	\end{align}
	Using Hypothesis \ref{hyp} (H.2), we estimate $J_3$ as 
	\begin{align}\label{4p35}
		\E\left[\left(\int_0^{T}\|\boldsymbol{\sigma}^n(t,\u_n(t))\|^2_{\mathcal{L}_{\Q}}\d
		t \right)^{\frac{q}{2}}\right]\leq 2^{\frac{q}{2}-1} T^{\frac{q}{2}}K^{\frac{q}{2}}\left\{1+\E\left[\sup_{t\in[0,T]}\|\u_n(t)\|_{\H}^q\right]\right\}.
	\end{align}
	Applying BDG, H\"{o}lder's  and Young's inequalities we estimate $J_4$ as 
	\begin{align}\label{4p36}
		&\E\left[\sup_{t\in[0,T]}\left|\int_0^{t}\left(\boldsymbol{\sigma}^n(s,\u_n(s))\d\W(s),\u_n(s)\right)\right|^{\frac{q}{2}}\right]\nonumber\\&\leq C_q \E\left[\int_0^{T}\|\boldsymbol{\sigma}^n(t,\u_n(t))\|_{\mathcal{L}_{\Q}}^2\|\u_n(t)\|_{\H}^2\d t\right]^{\frac{q}{4}}\nonumber\\&\leq \E\left[\sup_{t\in[0,T]}\|\u_n(t)\|_{\H}^{q}\right]+C_q\E\left[\left(\int_0^T\|\boldsymbol{\sigma}^n(t,\u_n(t))\|_{\mathcal{L}_{\Q}}^2\right)^{\frac{q}{2}}\right]\nonumber\\&\leq C_{q,T}K^{\frac{q}{2}}\left\{1+\E\left[\sup_{t\in[0,T]}\|\u_n(t)\|_{\H}^q\right]\right\}.
	\end{align}
	Substituting \eqref{4p35}-\eqref{4p36} in \eqref{4p34}, we deduce 
	\begin{align*}
		&   \sup_{n\geq 1}\E\left[\left(\int_0^{T}\|\nabla\u_n(t)\|_{\H}^2\d t\right)^{\frac{q}{2}}\right]
        +\sup_{n\geq 1}\E\left[\left(\int_0^{T}\|\u_n(t)\|_{\wi\L^{r+1}}^{r+1}\d t\right)^{\frac{q}{2}}\right]\nonumber\\&\leq C_{\mu,\alpha, \beta,q,T,K}\left\{ \|\u_0\|_{\H}^{q} + \left( \int_0^T\|\mathbf{F}(s)\|^2_{\V^{\prime}}\d s \right)^{\frac{q}{2}} + 1 + \E\left[\sup_{t\in[0,T]}\|\u_n(t)\|_{\H}^q\right]\right\}\nonumber\\&\leq C\left(\|\u_0\|_{\H}, \|\mathbf{F}\|_{\mathrm{L}^2(0,T;\V^{\prime})} ,\mu, \alpha,\beta,q,T,K\right), 
	\end{align*}
so that we get 
\begin{align}\label{4p32}
	&   \sup_{n\geq 1}\E\left[\left(\int_0^{T}\|\nabla\u_n(t)\|_{\H}^2\d t\right)^{p}\right]
    +\sup_{n\geq 1}\E\left[\left(\int_0^{T}\|\u_n(t)\|_{\wi\L^{r+1}}^{r+1}\d t\right)^{p}\right]\nonumber\\&\leq C\left(\|\u_0\|_{\H}, \|\mathbf{F}\|_{\mathrm{L}^{2}(0,T;\V^{\prime})} ,\mu, \alpha,\beta,p,T,K\right),
\end{align}
for some $p>1$. 
\end{remark}

\subsection{Tightness} In order to prove tightness, we first consider the space 
\begin{align}\label{4.32}
	\mathscr{Y}:=\C([0,T]; \mathbb{U}^{\prime}) \cap\mathrm{L}_w^2(0,T;\V)\cap\mathrm{L}_w^{r+1}(0,T;\wi\L^{r+1})\cap \mathrm{L}^2(0,T;\H_{\mathrm{loc}})\cap\C([0,T];\H_w).
\end{align}
For each $n\in\N$, the solution $\u_n(\cdot)$ of the Faedo-Galerkin approximation defines a measure $\mathscr{L}(\u_n)$ on  $(\mathscr{Y},\mathcal{T})$, where $\mathcal{T}=\mathcal{T}_1\vee\mathcal{T}_2\vee\mathcal{T}_3\vee\mathcal{T}_4\vee\mathcal{T}_5$. Using Corollary \ref{cor3.12}, our aim is to show that the set of measures $\{\mathscr{L}(\u_n): n \in\N\}$ is tight on $(\mathscr{Y},\mathcal{T})$. The estimate  \eqref{ener1} in Proposition \ref{prop1} and \eqref{4p32} play a crucial role. Nevertheless, in order to prove tightness, it is sufficient to use inequality \eqref{ener1}. 

\begin{lemma}\label{lem4.2}
	The set of measures $\{\mathscr{L}(\u_n):n\in\N\}$ is tight on $(\mathscr{Y},\mathcal{T})$. 
\end{lemma}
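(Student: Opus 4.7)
The plan is to verify the four hypotheses (a)--(d) of Corollary \ref{cor3.12} with $q=2$, which together imply that $\{\mathscr{L}(\u_n)\}$ is tight on the space $\mathscr{Y}$ defined in \eqref{4.32}. Conditions (a), (b) and (c) are immediate from the uniform energy estimates \eqref{ener1} of Proposition \ref{prop1} applied with $M_1,M_2,M_3$ the constants on the right-hand side of \eqref{ener1}. So the real work is the Aldous condition (d) in the Banach space $\X := \V'+\wi\L^{\frac{r+1}{r}}$, which I will verify using Lemma \ref{lem3.11}.

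The strategy is: given an $\mathscr{F}_t$-stopping time $\tau_n \leq T$ and $\theta \geq 0$, I use the Galerkin equation \eqref{47} to write
\begin{align*}
\u_n(\tau_n+\theta)-\u_n(\tau_n) &= -\mu\int_{\tau_n}^{\tau_n+\theta}\A^n\u_n(s)\d s-\int_{\tau_n}^{\tau_n+\theta}\B^n(\u_n(s))\d s-\beta\int_{\tau_n}^{\tau_n+\theta}\mathcal{C}^n(\u_n(s))\d s\\
&\quad +\int_{\tau_n}^{\tau_n+\theta}\boldsymbol{\sigma}^n(s,\u_n(s))\d\W(s)+\int_{\tau_n}^{\tau_n+\theta}\int_{\Z}\boldsymbol{\gamma}^n(s,\u_n(s-),z)\widetilde{\pi}(\d s,\d z)\\
&=: J_1^n+J_2^n+J_3^n+J_4^n+J_5^n,
\end{align*}
and estimate each $J_i^n$ separately in $\X$. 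Since $\|\cdot\|_{\X}\le \min\{\|\cdot\|_{\V'},\|\cdot\|_{\wi\L^{(r+1)/r}}\}$, I have the freedom to place each term in whichever of the two summands is convenient.

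For the Stokes term, \eqref{2.7a} and the Cauchy--Schwarz inequality give $\E\|J_1^n\|_{\V'} \leq \mu\,\theta^{1/2}\bigl(\E\int_0^T\|\u_n\|_{\V}^2\d s\bigr)^{1/2}\leq C\theta^{1/2}$. For the convective term, one uses the bound $\|\B(\u)\|_{\V'+\wi\L^{(r+1)/r}}\leq \|\u\|_{\wi\L^{r+1}}\|\u\|_{\wi\L^{2(r+1)/(r-1)}}$ (from \eqref{2p9}) together with the interpolation in \eqref{212}, yielding after H\"older's inequality in time an estimate $\E\|J_2^n\|_{\X}^{\xi_2}\leq C\theta^{\eta_2}$ for suitable exponents; for $r\in[1,3]$ one uses instead $\|\B(\u)\|_{\V'}\leq \|\u\|_{\wi\L^4}^2$ and $\V\hookrightarrow\wi\L^4$. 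For the absorption term, $\|\mathcal{C}(\u)\|_{\wi\L^{(r+1)/r}}=\|\u\|_{\wi\L^{r+1}}^{r}$ and H\"older's inequality in time give $\E\|J_3^n\|_{\wi\L^{(r+1)/r}}\leq \theta^{1/(r+1)}\bigl(\E\int_0^T\|\u_n\|_{\wi\L^{r+1}}^{r+1}\d s\bigr)^{r/(r+1)}\leq C\theta^{1/(r+1)}$. For the Wiener integral $J_4^n$, the It\^o isometry combined with Hypothesis \ref{hyp}(H.2) and the $\H$-bound from \eqref{ener1} yields $\E\|J_4^n\|_{\H}^2 \leq K_1\theta(1+\sup_n\E\sup_{t\le T}\|\u_n(t)\|_{\H}^2)\leq C\theta$; similarly for $J_5^n$ using the Poisson isometry. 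Since $\H\hookrightarrow \X$ continuously, these give control in $\X$.

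Collecting the five estimates, I obtain $\E\|\u_n(\tau_n+\theta)-\u_n(\tau_n)\|_{\X}^{\xi}\leq C\theta^{\eta}$ for some $\xi,\eta>0$ (one may take $\xi=1$ and $\eta=\min\{1/2,1/(r+1)\}$ after raising to a suitable power), uniformly in $n$ and in the choice of stopping time. Lemma \ref{lem3.11} then supplies the Aldous condition in $\X$, and Corollary \ref{cor3.12} closes the argument. The main obstacle is the convective term $J_2^n$ in the physically relevant three-dimensional fast-growing regime $r>3$, where the usual $\B:\V\cap\wi\L^4\to \V'$ bound is unavailable; here I must carefully split $\B^n(\u_n)$ via \eqref{2p9} so that the $\wi\L^{r+1}$-norm absorbs the interpolation loss and the resulting time-exponent from H\"older's inequality stays strictly positive.
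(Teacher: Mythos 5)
Your plan follows the paper's proof essentially step for step: conditions (a)--(c) of Corollary \ref{cor3.12} are read off from the uniform energy estimates, and the Aldous condition is verified via Lemma \ref{lem3.11} by splitting the increment $\u_n(\tau_n+\theta)-\u_n(\tau_n)$ into the Stokes, convective, absorption, Wiener and Poisson contributions, estimating the Stokes term in $\V'$ with exponent $\theta^{1/2}$, the absorption term in $\wi\L^{\frac{r+1}{r}}$ with $\theta^{1/(r+1)}$, and the two stochastic integrals in $\H$ via the isometries with $\xi=2$, $\eta=1$. The only place where your sketch, taken literally, would fail is the convective term in the regime $r\in[1,3]$: combining $\|\B(\u_n)\|_{\V'}\le\|\u_n\|_{\wi\L^4}^2$ with the plain embedding $\V\hookrightarrow\wi\L^4$ leaves you with $\E\bigl[\int_{\tau_n}^{\tau_n+\theta}\|\u_n(s)\|_{\V}^2\d s\bigr]$, which is uniformly bounded by the energy estimate but carries no positive power of $\theta$, so condition \eqref{37} of Lemma \ref{lem3.11} is not obtained. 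The paper instead uses the Ladyzhenskaya interpolation $\|\u\|_{\wi\L^4}^2\le C\|\u\|_{\H}^{2-\frac{d}{2}}\|\u\|_{\V}^{\frac{d}{2}}$ followed by H\"older in time to extract the factor $\theta^{1-\frac{d}{4}}$; this is exactly parallel to the interpolation \eqref{212}--\eqref{2.9a} that you correctly invoke for $r>3$ (yielding $\theta^{\frac{r-2}{r-1}}$), so the repair is a one-line substitution and the rest of your argument goes through unchanged.
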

\begin{proof}
 We apply Corollary \ref{cor3.12} to obtain the proof. The estimate \eqref{ener1} (for $p=2$) implies the conditions (a), (b) and (c) of Corollary \ref{cor3.12}. Therefore, it is sufficient to prove that the sequence $\{\u_n\}_{n\in\N}$ satisfies the Aldous condition $\mathbf{[A]}$ in the space $\mathbb{U}^{\prime}$.
	  We use Lemma \ref{lem3.11} to obtain the required result. Let $\{\tau_n\}_{n\in\N} $ be a sequence of stopping times such that $0 \leq\tau_n\leq T$. From \eqref{eqn-finite-dim-system}, we have 
	\begin{equation}\label{434}
		\begin{aligned}
			\u_n(t)&=\u_0^n-\mu\int_0^t \A^n\u_n(s)\d s-\int_0^t\B^n(\u_n(s))\d s - \alpha\int_0^t \u_n(s)\d s -\beta\int_0^t\mathcal{C}^n(\u_n(s))\d s\\&\quad + \int_0^t \mathbf{F}^n(s)\d s +\int_0^t\boldsymbol{\sigma}^n(s,\u_n(s))\d\W^n(s) \nonumber\\&=:J_1^n+\sum_{i=2}^7 J_i^n(t),\ t\in[0,T].
		\end{aligned}
	\end{equation}
	For $\theta>0$, we verify that each term $J_i^n, i=1,\ldots, 6$ satisfies the condition \eqref{37} in Lemma \ref{lem3.11}. For $J_1^n$, it is clear that the condition \eqref{37} is satisfied. For $J_2^n$, we use the continuous embedding $\V^{\prime}\hookrightarrow \mathbb{U}^{\prime}$, H\"older's inequality, \eqref{ener1} to estimate it as 
	\begin{align}
	&\E\left[\|J_2^n(\tau_n+\theta)-J_2^n(\tau_n)\|_{\mathbb{U}'}\right] \leq C \E\left[\|J_2^n(\tau_n+\theta)-J_2^n(\tau_n)\|_{\V'}\right]=C\mu\E\left[\left\|\int_{\tau_n}^{\tau_n+\theta}\A^n\u_n(s)\d s\right\|_{\V'}\right]\nonumber\\&\leq C\mu\E\left[\int_{\tau_n}^{\tau_n+\theta}\|\u_n(s)\|_{\V}\d s\right]\leq C\mu\theta^{1/2}\E\left[\left(\int_{\tau_n}^{\tau_n+\theta}\|\u_n(s)\|_{\V}^2\d s\right)^{1/2}\right]\nonumber\\&\leq C\left(\|\u_0\|_{\H}, \|\mathbf{F}\|_{\mathrm{L}^{2}(0,T;\V^{\prime})} ,\mu,\alpha,\beta,p,T,K\right)\theta^{1/2} =: c_2 \theta^{1/2},
	\end{align}
	so that $J_2^n$ satisfies the condition \eqref{37} with $\xi=1$ and $\eta=\frac{1}{2}$. Next, let $s> \frac{d}{2}+1$ and recall that $\B:\H\times\H\to \V_s$ is bilinear and continuous (Remark \ref{rem-trilinear-ext}), and the embedding $\V'_s\hookrightarrow \mathbb{U}'$ is continuous. We consider $J_3^n$ and estimate it using \eqref{eqn-trilinear-ext} and \eqref{ener1}  as 
	\begin{align}
		& \E\left[\|J_3^n(\tau_n+\theta)-J_3^n(\tau_n)\|_{\mathbb{U}'}\right] \leq C	\E\left[\|J_3^n(\tau_n+\theta)-J_3^n(\tau_n)\|_{\V'_s}\right]
        \nonumber\\ & = C\E\left[\left\|\int_{\tau_n}^{\tau_n+\theta}\B^n(\u_n(s))\d s\right\|_{\V'_s}\right] 
        \leq  C\E\left[\int_{\tau_n}^{\tau_n+\theta}\left\|\B(\u_n(s))\right\|_{\V'_s}\d s\right] 
        \nonumber\\& \leq C\E\left[\int_{\tau_n}^{\tau_n+\theta}\|\u_n(s)\|_{\H}^2\d s\right] \leq C\E\left[\sup_{s\in[0,T]}\|\u_n(s)\|_{\H}^2\right] \theta
        \nonumber\\&\leq C\left(\|\u_0\|_{\H}, \|\mathbf{F}\|_{\mathrm{L}^{2}(0,T;\V^{\prime})} ,\mu,\alpha,p,T,K\right)\theta  =: c_3 \theta,
	\end{align}
	which implies that the condition \eqref{37} is satisfied  with $\xi=1$ and $\eta=1$. Similarly, $J_4^n$ is  estimated using the continuous embedding $\H\hookrightarrow \mathbb{U}'$ and \eqref{ener1} as
\begin{align}
		& \E\left[\|J_4^n(\tau_n+\theta)-J_4^n(\tau_n)\|_{\mathbb{U}'}\right] \leq C	\E\left[\|J_4^n(\tau_n+\theta)-J_4^n(\tau_n)\|_{\H}\right]
         = C\alpha \E\left[\left\|\int_{\tau_n}^{\tau_n+\theta}\u_n(s)\d s\right\|_{\H}\right] 
        \nonumber\\ & \leq  C \alpha \E\left[\int_{\tau_n}^{\tau_n+\theta}\left\|\u_n(s)\right\|_{\H}\d s\right] 
     \leq C\alpha\E\left[\sup_{s\in[0,T]}\|\u_n(s)\|_{\H}\right] \theta
        \nonumber\\&\leq C\left(\|\u_0\|_{\H}, \|\mathbf{F}\|_{\mathrm{L}^{2}(0,T;\V^{\prime})} ,\mu,\alpha,\beta,p,T,K\right)\theta =: c_4 \theta,
	\end{align}
which implies that the condition \eqref{37} is satisfied  with $\xi=1$ and $\eta=1$. Now, we estimate $J_5^n$ using the continuous embedding $\L^{\frac{r+1}{r}}\hookrightarrow \mathbb{U}'$ and \eqref{ener1} as
	\begin{align}
		&\E\left[\|J_5^n(\tau_n+\theta)-J_5^n(\tau_n)\|_{\mathbb{U}^\prime} \right]
		= \E\left[\left\|\int_{\tau_n}^{\tau_n+\theta}\mathcal{C}^n(\u_n(s))\d s\right\|_{\mathbb{U}^\prime}\right] \leq  \E\left[\int_{\tau_n}^{\tau_n+\theta}\left\| \mathcal{C}(\u_n(s)) \right\|_{\mathbb{U}^\prime} \d s \right]
        \nonumber\\ & \leq  \E\left[\int_{\tau_n}^{\tau_n+\theta}\left\| \mathcal{C}(\u_n(s)) \right\|_{\L^{\frac{r+1}{r}}} \d s \right]
		\leq C\E\left[\int_{\tau_n}^{\tau_n+\theta}\|\u_n(s)\|_{\wi\L^{r+1}}^r \d s \right]
	\nonumber\\ & 	\leq C\theta^{\frac{1}{r+1}}\E\left[\left(\int_{\tau_n}^{\tau_n+\theta}\|\u_n(s)\|_{\wi\L^{r+1}}^{r+1}\d s\right)^{\frac{r}{r+1}}\right] \leq C\theta^{\frac{1}{r+1}}\left\{\E\left[\int_{0}^{T}\|\u_n(t)\|_{\wi\L^{r+1}}^{r+1}\d t\right]\right\}^{\frac{r}{r+1}} 
    \nonumber\\ &  \leq C\left(\|\u_0\|_{\H}, \|\mathbf{F}\|_{\mathrm{L}^{2}(0,T;\V^{\prime})} ,\mu,\alpha,\beta,p,T,K\right)\theta^{\frac{1}{r+1}} =: c_5 \theta^{\frac{1}{r+1}} ,
	\end{align}
	and thus the  condition \eqref{37} holds with $\xi=1$ and $\eta=\frac{1}{r+1}$. 
    Similarly, $J_6^n$ can be estimated using the continuous embedding $\V'\hookrightarrow \mathbb{U}'$ and \eqref{ener1} as
\begin{align}
		& \E\left[\|J_6^n(\tau_n+\theta)-J_6^n(\tau_n)\|_{\mathbb{U}'}\right] \leq C	\E\left[\|J_6^n(\tau_n+\theta)-J_6^n(\tau_n)\|_{\V'}\right]
         = C \E\left[\left\|\int_{\tau_n}^{\tau_n+\theta}\mathbf{F}^n(s)\d s\right\|_{\V'}\right] 
        \nonumber\\ & \leq  C  \E\left[\int_{\tau_n}^{\tau_n+\theta}\left\|\mathbf{F}^n(s)\right\|_{\V'}\d s\right] 
     \leq C \|\mathbf{F}\|_{\mathrm{L}^2(0,T;\V^{\prime})} \theta^{\frac12} =: c_6 \theta,
	\end{align}
which implies that the condition \eqref{37} is satisfied  with $\xi=1$ and $\eta=\frac12$.  Using It\^o's isometry,  Hypothesis \ref{hyp} (H.2) and \eqref{ener1}, we estimate $J_7^n$ as 
	\begin{align}
	&	\E\left[\|J_7^n(\tau_n+\theta)-J_7^n(\tau_n)\|_{\mathbb{U}'}^2\right]\leq C\E\left[\|J_7^n(\tau_n+\theta)-J_7^n(\tau_n)\|_{\H}^2\right]\nonumber\\&\leq C\E\left[\left\|\int_{\tau_n}^{\tau_n+\theta}\boldsymbol{\sigma}^n(s,\u_n(s))\d\W^n(s)\right\|_{\H}^2\right] \leq C\E\left[\int_{\tau_n}^{\tau_n+\theta}\|\boldsymbol{\sigma}(s,\u_n(s))\|_{\mathcal{L}_{\Q}}^2\d s\right]\nonumber\\&\leq CK\E\left[\int_{\tau_n}^{\tau_n+\theta}\left(1+\|\u_n(s)\|_{\H}^2\right)\d s\right]\leq CK\theta\E\left[1+\sup_{t\in[0,T]}\|\u_n(t)\|_{\H}^2\right]\nonumber\\&\leq C\left(\|\u_0\|_{\H}, \|\mathbf{F}\|_{\mathrm{L}^{2}(0,T;\V^{\prime})} ,\mu,\alpha,p,T,K\right)\theta := c_7 \theta,
	\end{align}
	therefore the condition \eqref{37} is satisfied with $\xi=2$ and $\eta=1$. Hence, an application of Lemma \ref{lem3.11} implies that the sequence $\{\u_n\}_{n\in\N}$ satisfies the Aldous condition in $\mathbb{U}'$.  This completes the proof.
\end{proof}

Next, we obtain the following result as an application of Theorem \ref{thm-JvST} which will be used to construct a martingale solution of underlying system.

\begin{lemma}\label{lem-strong-convergence-Y}
    There exists a subsequence $\{\u_{n_k}\}_{k\in\N}$ of $\{\u_n\}_{n\in\N}$, a probability space $(\bar{\Omega},\bar{\mathscr{F}},\bar{\mathbb{P}})$ and $\mathscr{Y}$-valued random variables $\u^*$, $\bar{\u}_k$, $k\in \N$ such that the variables $\u_{n_k}$ and $\bar{\u}_k$ have the same laws on $\mathscr{Y}$ and $\bar{\u}_k$ converges to $\u^*,$ $\bar{\mathbb{P}}$-a.s. on $\bar{\Omega}$.
\end{lemma}

\begin{proof}
    From Lemma \ref{lem4.2}, we know that the set of measures $\{\mathscr{L}(\u_n)\}_{n\in\N}$ is tight on the space $(\mathscr{Y},\mathcal{T})$.  Since $\mathrm{L}^2 (0, T ; \H_{\mathrm{loc}} )$ and $\C([0, T ]; \mathbb{U}')$ are separable and completely metrizable spaces, we conclude that on each of these spaces, there exists a countable family of continuous real valued mappings separating points. For the space $\mathrm{L}^2_w(0,T;\V)$, it is enough to take $$f_m(\u):=\int_0^T[(\u(t),\v_m(t)) + (\nabla\u(t),\nabla\v_m(t))]\d t\in\R, \ \u\in\mathrm{L}^2_{w}(0,T;\V),\ m\in\N,$$ where $\{\v_m\}_{m\in\N}$ is a dense subset of $\mathrm{L}^2(0, T; \V)$. Then the sequence $\{f_m\}_{m\in\N}$ consists of continuous real-valued mappings that separate points in the space $\mathrm{L}^2_w(0, T; \V)$. For the space $\mathrm{L}^{r+1}_w(0,T;\wi\L^{r+1})$, it is enough to consider $$f_m(\u):=\int_0^T\langle|\u(t)|^{r-1}\u(t),\v_m(t)\rangle\d t\in\R, \ \u\in\mathrm{L}^{r+1}_{w}(0,T;\wi\L^{r+1}),\ m\in\N,$$ where $\{\v_m\}_{m\in\N}$ is a dense subset of $\mathrm{L}^{r+1}(0, T; \wi\L^{r+1})$. Then $\{f_m\}_{m\in\N}$ forms a sequence of continuous real-valued mappings that separates points in the space $\mathrm{L}^{r+1}_w(0, T; \wi\L^{r+1})$.  Let $\widetilde\H\subset\H$ be a countable and dense subset of $\H$. Then  for each
	$\h\in\wi\H,$ the mapping
	$$\C([0, T]; \H_w) \ni \u\mapsto(\u(\cdot),\h)\in\C([0, T]; \R)$$
	is continuous. Since $\C([0,T];\R) $ is a separable complete metric space, there exists a sequence $\{g_{\ell}\}_{\ell\in\N}$ of real-valued continuous functions defined on $\C([0, T ]; \R)$ separating points of this space. Then the mappings $f_{\h,\ell} , \h \in\wi\H, \ell \in\N$ defined by
	$$f_{\h,\ell}(\u):=g_{\ell}((\u,\h)), \ \u\in\C([0,T];\H_w) $$ form a countable family of continuous mappings on $\C([0,T];\H_w) $  separating points of this space. Hence by an application of Theorem \ref{thm-JvST}, one can conclude the proof.
\end{proof}

Let us now prove the main result on the existence of martingale solutions of SCBFEs. The construction of a martingale solution is based on the idea followed in the article \cite{ZBEM}.

\begin{proof}[Proof of Theorem \ref{thm3.4}] We prove our main result in several steps. 

\vskip 0.2 cm
\noindent\textbf{Step (1).} We first recall that, by Lemma \ref{lem-strong-convergence-Y}, there exists a subsequence $\{\u_{n_k}\}_{k\in\N}$ of $\{\u_n\}_{n\in\N}$, a probability space $(\bar{\Omega},\bar{\mathscr{F}},\bar{\mathbb{P}})$ and $\mathscr{Y}$-valued random variables $\u^*$, $\bar{\u}_k$, $k\in \N$ such that 
\begin{align}\label{eqn-covergence-Y}
    \mbox{the variables $\u_{n_k}$ and $\bar{\u}_k$ have the same laws on $\mathscr{Y}$, and $\bar{\u}_k\to \u^*$  in $\mathscr{Y},$ \;\; $\bar{\mathbb{P}}$-a.s.}
\end{align}
 Since the random variables  $\bar{\omega}\mapsto \bar{\u}_n(\cdot,\bar{\omega})$ and $\omega\mapsto \u_n(\cdot,\omega)$ are identically distributed, we have the following estimates from \eqref{ener1}: 
	\begin{align}\label{ener3}
		&\sup_{n\geq 1}\bar{\E}\left[\sup_{t\in[0,T]}\|\bar{\u}_n(t)\|_{\H}^2\right]+\mu \sup_{n\geq 1}\bar{\E}\left[\int_0^{T}\|\nabla\bar{\u}_n(t)\|_{\H}^2\d t\right] 
        + \beta\sup_{n\geq 1}\bar{\E}\left[\int_0^T\|\bar{\u}_n(t)\|_{\widetilde{\L}^{r+1}}^{r+1}\d t\right]\nonumber\\&\leq C\left(\|\u_0\|_{\H}, \|\mathbf{F}\|_{\mathrm{L}^{2}(0,T;\V^{\prime})},\mu, \alpha ,T,K\right). 
	\end{align}
	Furthermore, from \eqref{ener1} and \eqref{4p32}, we infer  
	\begin{align}\label{ener4}
		&\sup_{n\geq 1}\bar{\E}\left[\sup_{t\in[0,T]}\|\bar{\u}_n(t)\|_{\H}^{2p}+\left(\int_0^T\|\nabla\bar{\u}_n(t)\|_{\H}^2\d t\right)^p 
+\left(\int_0^T\|\bar{\u}_n(t)\|_{\wi\L^{r+1}}^{r+1}\d t\right)^p\right]\nonumber\\&\leq C\left(\|\u_0\|_{\H}, \|\mathbf{F}\|_{\mathrm{L}^{2}(0,T;\V^{\prime})} ,\mu,\alpha,\beta,p,T,K\right),
	\end{align}
for some $p\geq 1$.	Using the energy estimates \eqref{ener3} and \eqref{ener4}, an application of the Banach-Alaoglu theorem yields the existence of a subsequence $\{\bar{\u}_n\}$ (still denoted by the same symbol) such that
\begin{align*}
   \bar{\u}_n  \stackrel{w^\ast}{\rightharpoonup} \u^* \text{ in }\mathrm{L}^{2p}(\bar{\Omega},\bar{\mathscr{F}},\bar{\mathbb{P}};\mathrm{L}^{\infty}(0,T;\H))
\end{align*}
and
\begin{align*}
   \bar{\u}_n  \stackrel{w}{\rightharpoonup} \u^* \text{ in } \mathrm{L}^{2p}(\bar{\Omega},\bar{\mathscr{F}},\bar{\mathbb{P}};\mathrm{L}^{2}(0,T;\V))\cap\mathrm{L}^{(r+1)p}(\bar{\Omega},\bar{\mathscr{F}},\bar{\mathbb{P}};\mathrm{L}^{r+1}(0,T;\wi\L^{r+1})).
\end{align*}
 Moreover, we have 
	\begin{align}\label{ener5}
		&\bar{\E}\left[\sup_{t\in[0,T]}\|{\u}^*(t)\|_{\H}^2\right]+\mu \bar{\E}\left[\int_0^{T}\|\nabla{\u}^*(t)\|_{\H}^2\d t\right] 
        +\beta\bar{\E}\left[\int_0^T\|{\u}^*(t)\|_{\widetilde{\L}^{r+1}}^{r+1}\d t\right]\nonumber\\&\leq C\left(\|\u_0\|_{\H}, \|\mathbf{F}\|_{\mathrm{L}^{2}(0,T;\V^{\prime})} ,\mu,\beta,p,T,K\right)
	\end{align}
and 
	\begin{align}\label{ener6}
	&\bar{\E}\left[\sup_{t\in[0,T]}\|{\u}^*(t)\|_{\H}^{2p}+\left(\int_0^T\|\nabla{\u}^*(t)\|_{\H}^2\d t\right)^p
    +\left(\int_0^T\|{\u}^*(t)\|_{\wi\L^{r+1}}^{r+1}\d t\right)^p\right]\nonumber\\&\leq C\left(\|\u_0\|_{\H}, \|\mathbf{F}\|_{\mathrm{L}^{2}(0,T;\V^{\prime})} ,\mu,\alpha,\beta,p,T,K\right). 
\end{align}

\vskip 0.2 cm
\noindent\textbf{Step (2).}  For each $n\in\N$, we consider a process $\bar{\mathcal{X}}_n$ with trajectories in $\C([0,T];\H)$ defined by 
\begin{align}
  \bar{\mathcal{X}}_n (t) & :=  \bar{\u}_n(t) - \bar{\u}_n(0) 
  +  \mu\int_0^t \A^n\bar{\u}_n(s)\d s + \int_0^t\B^n(\bar{\u}_n(s))\d s + \alpha\int_0^t \bar{\u}_n(s)\d s \nonumber\\ & \quad +\beta\int_0^t\mathcal{C}^n(\bar{\u}_n(s))\d s - \int_0^t \Pi_n\mathbf{F}(s) \d s,
\end{align}
for all $t\in[0,T]$.   We can verify that  $\bar{\mathcal{X}}_n$ is a square integrable martingale with respect to the filtration $\bar{\mathcal{F}}_n= \{\bar{\mathscr{F}}_{n,t}\}_{t\in[0,T]}$, where $\bar{\mathscr{F}}_{n,t} = \sigma\{\bar{\u}_n(s) : s\leq t\}$ with the quadratic variation
\begin{align}\label{eqn-quad-var}
    \langle\!\langle \bar{\mathcal{X}}_n \rangle\!\rangle_{t} = \int_0^t \Pi_n \boldsymbol{\sigma}(\tau,\bar{\u}_n(\tau))\Pi_n \mathrm{Q}\Pi_n\boldsymbol{\sigma}(\tau,\bar{\u}_n(\tau))^* \Pi_n \d \tau, \;\;\; t\in [0,T].
\end{align}

Indeed, since $\bar{\u}_k$ and $\u_{n_k}$ have the same laws, for all $s,t\in[0,T]$, 
$s\leq t$,
all functions $\f$ bounded and continuous on $\C([0,t];\mathbb{U}')$, and all $\varphi$, $\psi\in \mathbb{U}$, we have
\begin{align}\label{eqn-quad-1}
    \bar{\mathbb{E}} \left[ \langle \bar{\mathcal{X}}_n(t) - \bar{\mathcal{X}}_n (s), \psi \rangle \f (\bar{\u}_n|_{[0,s]})  \right] =0
\end{align}
and 
\begin{align}\label{eqn-quad-2}
   & \bar{\mathbb{E}} \bigg[ \bigg( \langle \bar{\mathcal{X}}_n(t), \psi \rangle \langle \bar{\mathcal{X}}_n(t), \varphi \rangle - \langle \bar{\mathcal{X}}_n(s), \psi \rangle \langle \bar{\mathcal{X}}_n(s), \varphi \rangle 
    \nonumber\\ & - \int_s^t \big( \mathrm{Q}^{\frac{1}{2}} \Pi_n \boldsymbol{\sigma}(\tau,\bar{\u}_n(\tau))^*\Pi_n \psi, \; \mathrm{Q}^{\frac{1}{2}} \Pi_n \boldsymbol{\sigma}(\tau,\bar{\u}_n(\tau))^*\Pi_n \varphi\big) \d \tau \bigg) \cdot \f (\bar{\u}_n|_{[0,s]}) \bigg] =0.
\end{align}
Our aim is to pass limit in \eqref{eqn-quad-1} and \eqref{eqn-quad-2}. Let $\bar{\mathcal{X}}$ be a $\mathbb{U}'$-valued process defined by
\begin{align}\label{eqn-limit-process}
  \bar{\mathcal{X}} (t) & :=  {\u^*}(t) - {\u^*}(0) +  \mu\int_0^t \A{\u^*}(s)\d s + \int_0^t\B({\u^*}(s))\d s + \alpha\int_0^t {\u^*}(s)\d s   
  \nonumber\\ & \quad  +\beta\int_0^t\mathcal{C}({\u^*}(s))\d s
   - \int_0^t \mathbf{F}(s) \d s,
\end{align}
for all $t\in[0,T]$.
\vskip 0.2 cm
\noindent\textbf{Step (3).} {\it We claim that for all $s,t\in[0,T]$ such that $s\leq t$ and for all $\varphi\in\mathbb{U}$, we have 
\begin{align}
    {\rm (a)} & \;\;\; \lim_{n\to \infty} (\bar{\u}_n(t), \varphi)_{\H} = ({\u^*}(t), \varphi)_{\H}, \;\; \bar{\mathbb{P}}\text{-a.s.}, \label{Con1}\\
    \rm (b) &  \;\;\; \lim_{n\to \infty}  \int_s^t \langle\A\bar{\u}_n(\tau), \Pi_n\varphi\rangle\d \tau  = \int_s^t \langle\A{\u^*}(\tau), \varphi\rangle\d \tau, \;\; \bar{\mathbb{P}}\text{-a.s.},\label{Con2}\\
    {\rm (c)} &  \;\;\; \lim_{n\to \infty}  \int_s^t \langle\B(\bar{\u}_n(\tau)), \Pi_n\varphi\rangle\d \tau = \int_s^t \langle\B ({\u^*}(\tau)), \varphi\rangle\d \tau, \;\; \bar{\mathbb{P}}\text{-a.s.},\label{Con3}\\   
    {\rm (d)} &  \;\;\; \lim_{n\to \infty}  \int_s^t (\bar{\u}_n(\tau), \varphi)\d \tau  = \int_s^t ({\u^*}(\tau), \varphi) \d \tau, \;\; \bar{\mathbb{P}}\text{-a.s.},\label{Con4}\\
    {\rm (e)} & \;\;\; \lim_{n\to \infty}  \int_s^t \langle\mathcal{C}(\bar{\u}_n(\tau)), \Pi_n\varphi\rangle\d \tau = \int_s^t \langle\mathcal{C} ({\u^*}(\tau)), \varphi\rangle\d \tau, \;\; \bar{\mathbb{P}}\text{-a.s.}\label{Con5}
\end{align}}
Let us choose and fix $s,t\in [0,T]$ such that $s\leq t$ and $\varphi\in\mathbb{U}$. In view of \eqref{eqn-covergence-Y}, we have
\begin{align}
    \bar{\u}_n \to \u^* \text{ in } \C([0,T]; \mathbb{U}^{\prime}) \cap\mathrm{L}_w^2(0,T;\V)\cap\mathrm{L}_w^{r+1}(0,T;\wi\L^{r+1})\cap \mathrm{L}^2(0,T;\H_{\mathrm{loc}})\cap\C([0,T];\H_w), 
\end{align}
$\bar{\mathbb{P}}$-a.s. Assertion $\mathrm{(a)}$ follows from the convergence $\bar{\u}_n \to \u^*$ in $\C([0,T];\H_w)$, $\bar{\mathbb{P}}$-a.s. 
Assertions $\mathrm{(b)}$ and $\mathrm{(d)}$ follow from the convergence $\bar{\u}_n \to \u^*$ in $\mathrm{L}_w^2(0,T;\V)$, $\bar{\mathbb{P}}$-a.s. and Lemma \ref{lem-conv-Pi_m}. Assertion $\mathrm{(c)}$ follows from the convergence $\bar{\u}_n \to \u^*$ in $\mathrm{L}_w^2(0,T;\V)\cap \mathrm{L}^2(0,T;\H_{\mathrm{loc}})$, $\bar{\mathbb{P}}$-a.s. and Lemma \ref{lem-convergence-B}. Assertion $\mathrm{(e)}$ follows from the convergence $\bar{\u}_n \to \u^*$ in $\mathrm{L}_w^{r+1}(0,T;\wi\L^{r+1})\cap \mathrm{L}^2(0,T;\H_{\mathrm{loc}})$, $\bar{\mathbb{P}}$-a.s. and Lemma \ref{lem-convergence-C}.

\vskip 0.2 cm
\noindent\textbf{Step (4).} {\it In this step, we show that for all $s,t\in[0,T]$ with $s\leq t$ and for all $\varphi\in \mathbb{U}:$
\begin{align}\label{eqn-step-4}
    \lim_{n\to\infty} \bar{\mathbb{E}} \left[ \langle \bar{\mathcal{X}}_n(t) - \bar{\mathcal{X}}_n (s), \varphi \rangle \f (\bar{\u}_n|_{[0,s]})  \right] = \bar{\mathbb{E}} \left[ \langle \bar{\mathcal{X}}(t) - \bar{\mathcal{X}} (s), \varphi \rangle \f (\u^*|_{[0,s]})  \right].
\end{align}}

We fix $s,t\in[0,T]$ with $s\leq t$ and $\varphi\in \mathbb{U}$. In view of \eqref{eqn-adjoint-projection}, we write 
\begin{align*}
    \langle \bar{\mathcal{X}}_n(t) - \bar{\mathcal{X}}_n (s), \psi \rangle & = (\bar{\u}_n(t), \Pi_n\varphi)_{\H} - (\bar{\u}_n(s), \Pi_n\varphi)_{\H} +  \mu \int_s^t \langle\A\bar{\u}_n(\tau), \Pi_n\varphi\rangle\d \tau
    \nonumber\\ & \quad + \int_s^t \langle\B(\bar{\u}_n(\tau)), \Pi_n\varphi\rangle\d \tau + \alpha \int_s^t (\bar{\u}_n(\tau), \varphi)\d \tau + \beta \int_s^t \langle\mathcal{C}(\bar{\u}_n(\tau)), \Pi_n\varphi\rangle\d \tau.
\end{align*}
Making use of convergences \eqref{Con1}-\eqref{Con2}, we find 
\begin{align}\label{eqn-Xn-conv}
    \lim_{n\to\infty}  \langle \bar{\mathcal{X}}_n(t) - \bar{\mathcal{X}}_n (s), \psi \rangle   = \langle \bar{\mathcal{X}}(t) - \bar{\mathcal{X}} (s), \psi \rangle, \;\;\; \bar{\mathbb{P}}\text{-a.s.}
\end{align}

Due to continuity property of $\f$, we note that $\bar{\mathbb{P}}$-a.s., $\lim\limits_{n\to\infty} \f (\bar{\u}_n|_{[0,s]}) = \f (\bar{\u}^*|_{[0,s]})$, and we also have $\sup\limits_{n\in\N}\|\f (\bar{\u}_n|_{[0,s]})\|_{\mathrm{L}^{\infty}}< + \infty$. We now define
\begin{align*}
    \h_n(\omega) :=  \bigg( \langle \bar{\mathcal{X}}_n(t, \omega), \varphi \rangle  - \langle \bar{\mathcal{X}}_n(s,\omega), \varphi \rangle \bigg)  \f \big(\bar{\u}_n|_{[0,s]}(\omega)\big), \;\;\text{for a.e. }\omega\in \bar{\Omega}.
\end{align*}
We are now going to show that the functions $\{\h_n\}_{n\in\N}$ are uniformly integrable. We first show that 
\begin{align}\label{eqn-unif-1}
    \sup_{n\in\N} \bar{\mathbb{E}} \big[ |\h_n|^2 \big] < +\infty.
\end{align}
Indeed, in view of the continuous embedding $\mathbb{U}\hookrightarrow\H$ and the Cauchy-Schwarz inequality, we have for each $n\in\N$
\begin{align}\label{eqn-unif-2}
    \bar{\mathbb{E}} \big[ |\h_n|^2 \big] \leq C \|\f\|^2_{\mathrm{L}^{\infty}}\|\varphi\|_{\mathbb{U}}^2\bar{\mathbb{E}} \big[ \|\bar{\mathcal{X}}_n(t)\|_{\H}^2 + \|\bar{\mathcal{X}}_n(s)\|_{\H}^2 \big].
\end{align}
Since $\bar{\mathcal{X}}_n(\cdot)$ is a continuous martingale with the quadratic variation defined in \eqref{eqn-quad-var}, an application of the  BDG inequality, Hypothesis \ref{hyp} (H.2) and \eqref{ener3} yield
\begin{align}\label{eqn-unif-3}
    \bar{\mathbb{E}} \bigg[ \sup_{t\in[0,T]}\|\bar{\mathcal{X}}_n(t)\|_{\H}^2  \bigg] & \leq C \bar{\mathbb{E}} \bigg[ \bigg( \int_0^T\|\Pi_n\boldsymbol{\sigma} (\tau, \bar{\u}_n(\tau))\Pi_n\|_{\mathcal{L}_{\mathrm{Q}}}^2 \d \tau \bigg)^{\frac12} \bigg]
    \nonumber\\ & \leq C \bar{\mathbb{E}} \bigg[ \bigg( \int_0^T\|\boldsymbol{\sigma} (\tau, \bar{\u}_n(\tau))\|_{\mathcal{L}_{\mathrm{Q}}}^2 \d \tau \bigg)^{\frac12} \bigg]
    \nonumber\\ & \leq C \bar{\mathbb{E}} \bigg[ \bigg( \int_0^T K \left[1 +  \|\bar{\u}_n(\tau)\|_{\H}^2\right] \d \tau \bigg)^{\frac12} \bigg] <\infty.
\end{align}
This implies from \eqref{eqn-unif-2} and \eqref{eqn-unif-3} that \eqref{eqn-unif-1} holds. Since the sequence $\{ \h_n\}_{n\in\N}$ is uniformly integrable, the  $\bar{\mathbb{P}}$-a.s.  convergence given in \eqref{eqn-Xn-conv} allows apply the Vitali convergence theorem  to obtain \eqref{eqn-step-4}.

\vskip 0.2 cm
\noindent\textbf{Step (5).} {\it In this step, we show that for all $s,t\in[0,T]$ with $s\leq t$ and for all $\psi, \varphi \in \mathbb{U}:$
\begin{align}\label{eqn-step-5}
    & \lim_{n\to\infty} \bar{\mathbb{E}} \left[ \big\{ \langle \bar{\mathcal{X}}_n(t), \psi \rangle \langle \bar{\mathcal{X}}_n(t), \varphi \rangle  - \langle \bar{\mathcal{X}}_n(s), \psi \rangle \langle \bar{\mathcal{X}}_n(s), \varphi \rangle\big\} \f (\bar{\u}_n|_{[0,s]})  \right] 
    \nonumber\\ & \quad = \bar{\mathbb{E}} \left[ \big\{ \langle \bar{\mathcal{X}}(t), \psi \rangle \langle \bar{\mathcal{X}}(t), \varphi \rangle  - \langle \bar{\mathcal{X}}(s), \psi \rangle \langle \bar{\mathcal{X}}(s), \varphi \rangle\big\} \f (\bar{\u}|_{[0,s]})  \right].
\end{align}}

We fix $s,t\in [0,T]$ with $s\leq t$ and $\psi, \varphi \in \mathbb{U}$ and write
\begin{align*}
  \g_n(\omega) & :=   \big\{ \bar{\mathcal{X}}_n(t,\omega), \psi \rangle \langle \bar{\mathcal{X}}_n(t,\omega), \varphi \rangle  - \langle \bar{\mathcal{X}}_n(s,\omega), \psi \rangle \langle \bar{\mathcal{X}}_n(s, \omega), \varphi \rangle\big\}  \f \big(\bar{\u}_n|_{[0,s]}(\omega)\big), && \omega\in \bar{\Omega},\\
  \g(\omega) & :=   \big\{ \bar{\mathcal{X}}(t,\omega), \psi \rangle \langle \bar{\mathcal{X}}(t,\omega), \varphi \rangle  - \langle \bar{\mathcal{X}}(s,\omega), \psi \rangle \langle \bar{\mathcal{X}}(s, \omega), \varphi \rangle\big\}  \f \big({\u^*}|_{[0,s]}(\omega)\big),  &&\omega\in \bar{\Omega}.
\end{align*}
In view of \eqref{Con1}-\eqref{Con5}, we have that $\lim\limits_{n\to\infty}\g_n(\omega) = \g(\omega)$, $\bar{\mathbb{P}}$-a.s. Next, let us show that the functions $\{\g_{n}\}_{n\in\N}$ are uniformly integrable. For this purpose, it is enough to prove that for some $\eta>1$,
\begin{align}\label{eqn-unif-4}
    \sup_{n\in\N} \bar{\mathbb{E}} \big[ |\g_n|^\eta \big] < +\infty.
\end{align}
Again, in view of continuous embedding $\mathbb{U}\hookrightarrow\H$ and the Cauchy-Schwarz inequality, we have for each $n\in\N$
\begin{align}\label{eqn-unif-5}
    \bar{\mathbb{E}} \big[ |\g_n|^\eta \big] \leq C^{2r} \|\f\|^\eta_{\mathrm{L}^{\infty}} \|\psi\|_{\mathbb{U}}^\eta \|\varphi\|_{\mathbb{U}}^\eta \bar{\mathbb{E}} \big[ \|\bar{\mathcal{X}}_n(t)\|_{\H}^{2\eta} + \|\bar{\mathcal{X}}_n(s)\|_{\H}^{2\eta} \big].
\end{align}
Since $\bar{\mathcal{X}}_n(\cdot)$ is a continuous martingale with the quadratic variation defined in \eqref{eqn-quad-var}, an application of the BDG inequality, Hypothesis \ref{hyp} (H.2) and \eqref{ener4} yield
\begin{align}\label{eqn-unif-6}
    \bar{\mathbb{E}} \bigg[ \sup_{t\in[0,T]}\|\bar{\mathcal{X}}_n(t)\|_{\H}^{2\eta}  \bigg] & \leq C \bar{\mathbb{E}} \bigg[ \bigg( \int_0^T\|\Pi_n\boldsymbol{\sigma} (\tau, \bar{\u}_n(\tau))\Pi_n\|_{\mathcal{L}_{\mathrm{Q}}}^2 \d \tau \bigg)^{\eta} \bigg]
    \nonumber\\ & \leq C \bar{\mathbb{E}} \bigg[ \bigg( \int_0^T\|\boldsymbol{\sigma} (\tau, \bar{\u}_n(\tau))\|_{\mathcal{L}_{\mathrm{Q}}}^2 \d \tau \bigg)^{\eta} \bigg]
    \nonumber\\ & \leq C \bar{\mathbb{E}} \bigg[ \bigg( \int_0^T K \left[1 +  \|\bar{\u}_n(\tau)\|_{\H}^2\right] \d \tau \bigg)^{\eta} \bigg] <\infty.
\end{align}
Combining \eqref{eqn-unif-5} and \eqref{eqn-unif-6} yields \eqref{eqn-unif-4}. Because the sequence $\{ \g_n\}_{n\in\N}$ is uniformly integrable, the Vitali convergence theorem can be applied to conclude \eqref{eqn-step-4}.

\vskip 0.2 cm
\noindent\textbf{Step (6)}: (\textit{Convergence of the quadratic variation.}) {\it  In this step, we show that for all $s,t\in[0,T]$ with $s\leq t$ and for all $\psi, \varphi \in \mathbb{U}$, we otain
\begin{align}\label{Conv-quad-vari}
    & \lim_{n\to\infty}\bar{\mathbb{E}} \bigg[ \bigg(\int_s^t \big(\mathrm{Q}^{\frac{1}{2}}\Pi_n \boldsymbol{\sigma}(\tau,\bar{\u}_n(\tau))^*\Pi_n \psi, \; \mathrm{Q}^{\frac{1}{2}} \Pi_n \boldsymbol{\sigma}(\tau,\bar{\u}_n(\tau))^*\Pi_n \varphi\big) \d \tau \bigg)  \f (\bar{\u}_n|_{[0,s]}) \bigg] 
    \nonumber\\ & = \bar{\mathbb{E}} \bigg[ \bigg(\int_s^t \big(\boldsymbol{\sigma}(\tau,{\u^*}(\tau))^* \psi,  \boldsymbol{\sigma}(\tau,{\u^*}(\tau))^* \varphi\big) \d \tau \bigg)  \f ({\u^*}|_{[0,s]}) \bigg].
\end{align}
}
We fix $s,t\in [0,T]$ with $s\leq t$ and $\psi, \varphi \in \mathbb{U}$ and define
\begin{align*}
 & \mathfrak{F}_n(\omega)
 \nonumber\\ & :=  \bigg(\int_s^t \big( \mathrm{Q}^{\frac{1}{2}} \Pi_n \boldsymbol{\sigma}(\tau,\bar{\u}_n(\tau, \omega))^*\Pi_n \psi, \; \mathrm{Q}^{\frac{1}{2}} \Pi_n \boldsymbol{\sigma}(\tau,\bar{\u}_n(\tau,\omega))^*\Pi_n \varphi\big) \d \tau \bigg)  \f (\bar{\u}_n|_{[0,s]}(\omega)), \;\; \text{for a.e. }\omega\in \bar{\Omega}.
\end{align*}
Proceeding as in the previous steps, we show that $\mathfrak{F}_n$  is $\bar{\mathbb{P}}$-a.s convergent  and uniformly integrabile. For uniform integrability, it is enough to demonstrate  that for some $\eta>1$
\begin{align}\label{eqn-unif-7}
    \sup_{n\in\N} \bar{\mathbb{E}} \big[ |\mathfrak{F}_n|^\eta \big] < +\infty.
\end{align}
Let us consider
\begin{align}
  \bar{\mathbb{E}} \left[ |\mathfrak{F}_n |^{\eta} \right] & =  \bar{\mathbb{E}} \left[ \left|  \bigg(\int_s^t \big( \mathrm{Q}^{\frac{1}{2}}\Pi_n \boldsymbol{\sigma}(\tau,\bar{\u}_n(\tau)^*\Pi_n \psi, \; \mathrm{Q}^{\frac{1}{2}} \Pi_n \boldsymbol{\sigma}(\tau,\bar{\u}_n(\tau))^*\Pi_n \varphi\big) \d \tau \bigg) \cdot \f (\bar{\u}_n|_{[0,s]})\right|^{\eta}\right]
   \nonumber\\ &
   \leq \|\f\|_{\mathrm{L}^{\infty}}^{\eta}   \bar{\mathbb{E}} \left[\bigg(\int_s^t \big\|\mathrm{Q}^{\frac{1}{2}}\Pi_n \boldsymbol{\sigma}(\tau,\bar{\u}_n(\tau))^*\Pi_n \psi\big\|_{\H} \big\|\mathrm{Q}^{\frac{1}{2}} \Pi_n \boldsymbol{\sigma}(\tau,\bar{\u}_n(\tau))^*\Pi_n \varphi\big\|_{\H} \d \tau \bigg)^{\eta}\right]
   \nonumber\\ &
   \leq C^{2\eta} \|\f\|_{\mathrm{L}^{\infty}}^{\eta} \|\psi\|_{\mathbb{U}}^{\eta}  \|\varphi\|_{\mathbb{U}}^{\eta} \bar{\mathbb{E}} \left[ \bigg(\int_s^t \big\|\boldsymbol{\sigma}(\tau,\bar{\u}_n(\tau))^*\big\|_{\mathcal{L}_{\mathrm{Q}}}^2 \d \tau \bigg)^{\eta}\right]
   \nonumber\\ &
   \leq C^{2\eta} K^{\eta} \|\f\|_{\mathrm{L}^{\infty}}^{\eta} \|\psi\|_{\mathbb{U}}^{\eta}  \|\varphi\|_{\mathbb{U}}^{\eta}  \bar{\mathbb{E}} \left[ \bigg(\int_s^t \left[1+ \big\|\bar{\u}_n(\tau)\big\|_{\H}^2\right] \d \tau \bigg)^{\eta}\right]
   \nonumber\\ &
   \leq C^{2\eta} K^{\eta} \|\f\|_{\mathrm{L}^{\infty}}^{\eta} \|\psi\|_{\mathbb{U}}^{\eta}  \|\varphi\|_{\mathbb{U}}^{\eta} (t-s)^{\eta-1} \bar{\mathbb{E}} \left[\int_s^t \left[1+  \big\|\bar{\u}_n(\tau)\big\|_{\H}^2\right]^{\eta} \d \tau \right], 
\end{align}
which implies \eqref{eqn-unif-7} in view of \eqref{ener4}.

We now proceed to establish the $\bar{\mathbb{P}}$-a.s. convergence of $\mathfrak{F}_n$. Let us choose and fix $\omega\in \bar{\Omega}$ such that 
\begin{align}
   (i) \; &   \bar{\u}_n(\cdot,\omega)\to \u^* (\cdot,\omega) \text{  in } \mathrm{L}^2(0,T ; \H_{\mathrm{loc}}). \label{prop-fix-omega-1} \\
    (ii) \;   & \u^*(\cdot,\omega)\in \mathrm{L}^2(0,T ; \H) \text{ and the sequence } \{\bar{\u}_n(\cdot,\omega)\}_{n\in\N} \text{ is bounded in } \mathrm{L}^2(0,T ; \H). \label{prop-fix-omega-2}
\end{align}
Our aim is to show that 
\begin{align}
    &  \int_s^t \big( \mathrm{Q}^{\frac{1}{2}} \Pi_n \boldsymbol{\sigma}(\tau,\bar{\u}_n(\tau,\omega))^*\Pi_n \psi,\; \mathrm{Q}^{\frac{1}{2}} \Pi_n \boldsymbol{\sigma}(\tau,\bar{\u}_n(\tau,\omega))^*\Pi_n \varphi\big) \d \tau  
    \nonumber\\ &  \to \int_s^t \big( \mathrm{Q}^{\frac{1}{2}}\boldsymbol{\sigma}(\tau,{\u^*}(\tau,\omega))^* \psi, \; \mathrm{Q}^{\frac{1}{2}}  \boldsymbol{\sigma}(\tau,{\u^*}(\tau,\omega))^* \varphi\big) \d \tau, \quad  \text{as } n\to \infty.
\end{align}
For this purpose, it is enough to ensure that 
\begin{align}\label{Conv-noise-coe}
    \mathrm{Q}^{\frac{1}{2}} \Pi_n \boldsymbol{\sigma}(\tau,\bar{\u}_n(\tau,\omega))^*\Pi_n \psi \to \mathrm{Q}^{\frac{1}{2}} \boldsymbol{\sigma}(\tau,{\u^*}(\tau,\omega))^* \psi \text{ in } \; \mathrm{L}^2(s,t;\H), \quad \text{ for all }
     \psi\in \mathbb{U}.
\end{align}
Therefore, we consider 
\begin{align}
    & \int_{s}^{t} \|\mathrm{Q}^{\frac{1}{2}}\Pi_n \boldsymbol{\sigma}(\tau,\bar{\u}_n(\tau,\omega))^*\Pi_n \psi - \mathrm{Q}^{\frac{1}{2}} \boldsymbol{\sigma}(\tau,\bar{\u}_n(\tau,\omega))^*\Pi_n \psi \|_{\H}^2 \d \tau
    \nonumber\\ & 
    \leq \int_{s}^{t} \big[ \|\mathrm{Q}^{\frac{1}{2}}\Pi_n \boldsymbol{\sigma}(\tau,\bar{\u}_n(\tau,\omega))^*\Pi_n \psi - \mathrm{Q}^{\frac{1}{2}}\boldsymbol{\sigma}(\tau,{\u^*}(\tau,\omega))^* \Pi_n \psi\|_{\H} 
    \nonumber\\ & \qquad + \|\mathrm{Q}^{\frac{1}{2}}\boldsymbol{\sigma}(\tau,\bar{\u}_n(\tau,\omega))^*\Pi_n \psi - \mathrm{Q}^{\frac{1}{2}}\boldsymbol{\sigma}(\tau,\bar{\u}_n(\tau,\omega))^* \psi\|_{\H} 
     \nonumber\\ & \qquad  + \|\mathrm{Q}^{\frac{1}{2}} \boldsymbol{\sigma}(\tau,\bar{\u}_n(\tau,\omega))^* \psi - \mathrm{Q}^{\frac{1}{2}}\boldsymbol{\sigma}(\tau,{\u^*}(\tau,\omega))^* \psi\|_{\H} \big]^2 \d \tau
     \nonumber\\ & 
    \leq 4\int_{s}^{t}  \|\mathrm{Q}^{\frac{1}{2}}\Pi_n \boldsymbol{\sigma}(\tau,\bar{\u}_n(\tau,\omega))^*\Pi_n \psi - \mathrm{Q}^{\frac{1}{2}}\boldsymbol{\sigma}(\tau,{\u^*}(\tau,\omega))^* \Pi_n \psi\|^2_{\H} \d \tau
    \nonumber\\ &  \quad  + 4\int_{s}^{t}  \|\mathrm{Q}^{\frac{1}{2}}\boldsymbol{\sigma}(\tau,\bar{\u}_n(\tau,\omega))^*\Pi_n \psi - \mathrm{Q}^{\frac{1}{2}}\boldsymbol{\sigma}(\tau,\bar{\u}_n(\tau,\omega))^* \psi\|^2_{\H}  \d \tau
     \nonumber\\ & \quad  + 4\int_{s}^{t}  \| \mathrm{Q}^{\frac{1}{2}}\boldsymbol{\sigma}(\tau,\bar{\u}_n(\tau,\omega))^* \psi - \mathrm{Q}^{\frac{1}{2}} \boldsymbol{\sigma}(\tau,{\u^*}(\tau,\omega))^* \psi\|^2_{\H}  \d \tau
     \nonumber \\ & =: L_1^n + L_2^n + L_3^n.
\end{align}
In view of the properties of projection $\Pi_n$ in  Lemma \ref{lem-conv-Pi_m}, Hypothesis \ref{hyp} (H.2) and \eqref{prop-fix-omega-1}, we immediately have
\begin{align*}
    \lim_{n\to\infty} [L_1^n + L_2^n] = 0.
\end{align*}
    Now, it remains to show that $\lim\limits_{n\to\infty} L_3^n=0.$  Let us choose $\varphi\in\mathcal{V}$ and fix it. Then there exists  $R>0$ such that $\mathrm{supp}\;\varphi$ is a compact subset of $\mathcal{O}_{R}$. Since, by \eqref{prop-fix-omega-1}, we have $\bar{\u}_n(\cdot,\omega)\to \u^* (\cdot,\omega)$ in  $\mathrm{L}^2(0,T ; \H_{\mathcal{O}_R})$, it follows that, along a subsequence (not relabeled),
    \begin{align*}
        \bar{\u}_n(\tau,\omega)\to \u^* (\tau,\omega) \text{ in }  \H_{\mathcal{O}_R} \text{ for a.e. } \tau\in [0,T] \; \text{ as }  n\to \infty.
    \end{align*}
    Hence by Hypothesis \ref{hyp} (H.4)
\begin{align*}
        \mathrm{Q}^{\frac{1}{2}} \boldsymbol{\sigma}(\tau,\bar{\u}_n(\tau,\omega))^*  \varphi \to \mathrm{Q}^{\frac{1}{2}} \boldsymbol{\sigma}(\tau,\u^* (\tau,\omega))^* \varphi  \text{ in }  \H \text{ for a.e. } \tau\in [0,T] \; \text{ as }  n\to \infty.
    \end{align*}
Consequently, by the Vitali convergence theorem, we have 
\begin{align}\label{J_3}
    \lim_{n\to\infty} L^n_3 = 0, \;\;\text{ for }  \varphi\in \mathcal{V}.
\end{align}

If $\psi\in \H$, then for every $\delta>0$, ther exists  $\psi_{\delta}\in \mathcal{V}$ such that $\|\psi -\psi_{\delta}\|_{\H}\leq \delta$. Consider for $\psi\in \H$,
\begin{align*}
    & \int_{s}^{t}  \|\mathrm{Q}^{\frac{1}{2}} \boldsymbol{\sigma}(\tau,\bar{\u}_n(\tau,\omega))^* \psi - \mathrm{Q}^{\frac{1}{2}} \boldsymbol{\sigma}(\tau,{\u^*}(\tau,\omega))^* \psi\|^2_{\H}  \d \tau
    \nonumber\\ & \leq 2 \int_{s}^{t}  \| \mathrm{Q}^{\frac{1}{2}} \boldsymbol{\sigma}(\tau,\bar{\u}_n(\tau,\omega))^* (\psi-\psi_{\delta}) - \mathrm{Q}^{\frac{1}{2}} \boldsymbol{\sigma}(\tau,{\u^*}(\tau,\omega))^* (\psi-\psi_{\delta})\|^2_{\H}  \d \tau
    \nonumber\\ & \qquad + 2 \int_{s}^{t}  \| \mathrm{Q}^{\frac{1}{2}} \boldsymbol{\sigma}(\tau,\bar{\u}_n(\tau,\omega))^* \psi_{\delta} - \mathrm{Q}^{\frac{1}{2}} \boldsymbol{\sigma}(\tau,{\u^*}(\tau,\omega))^* \psi_{\delta}\|^2_{\H}  \d \tau
    \nonumber\\ & \leq C^{*} \|\psi-\psi_{\delta}\|^2_{\H}  + 2 \int_{s}^{t}  \|\mathrm{Q}^{\frac{1}{2}} \boldsymbol{\sigma}(\tau,\bar{\u}_n(\tau,\omega))^* \psi_{\delta} - \mathrm{Q}^{\frac{1}{2}} \boldsymbol{\sigma}(\tau,{\u^*}(\tau,\omega))^* \psi_{\delta}\|^2_{\H}  \d \tau,
\end{align*}
where $C^*>0$ is a constant independent of $n$ and $\delta$. Taking $\limsup$ on both side and using \eqref{J_3}, we find
\begin{align*}
    & \limsup_{n\to\infty} \int_{s}^{t}  \| \mathrm{Q}^{\frac{1}{2}}\boldsymbol{\sigma}(\tau,\bar{\u}_n(\tau,\omega))^* \psi -  \mathrm{Q}^{\frac{1}{2}} \boldsymbol{\sigma}(\tau,{\u^*}(\tau,\omega))^* \psi\|^2_{\H}  \d \tau \leq C^{*} \delta^2.
\end{align*}
Since $\delta>0$ is arbitrary, we infer 
\begin{align}\label{J_31}
    \lim_{n\to\infty} L^n_3 = 0, \;\;\text{ for } \psi\in \H.
\end{align}
This concludes the proof of \eqref{Conv-noise-coe} and hence \eqref{Conv-quad-vari} holds.

\vskip 0.2 cm
\noindent\textbf{Step (7).} Making use of \eqref{eqn-step-4}, \eqref{eqn-step-5} and \eqref{Conv-quad-vari} in \eqref{eqn-quad-1}-\eqref{eqn-quad-2}, we conclude that 
for all $s,t\in[0,T]$, $s\leq t$,  for every bounded continuous function $\f$ on
 $\C([0,t];\mathbb{U}')$, and all $\varphi$, $\psi\in \mathbb{U}$, we have
\begin{align}\label{eqn-quad-3}
    \bar{\mathbb{E}} \left[ \langle \bar{\mathcal{X}}(t) - \bar{\mathcal{X}} (s), \psi \rangle \f ({\u^*}|_{[0,s]})  \right] =0
\end{align}
and 
\begin{align}\label{eqn-quad-4}
   & \bar{\mathbb{E}} \bigg[ \bigg( \langle \bar{\mathcal{X}}(t), \psi \rangle \langle \bar{\mathcal{X}}(t), \varphi \rangle - \langle \bar{\mathcal{X}}(s), \psi \rangle \langle \bar{\mathcal{X}}(s), \varphi \rangle 
    \nonumber\\ & - \int_s^t \big( \mathrm{Q}^{\frac12} \boldsymbol{\sigma}(\tau,{\u^*}(\tau))^* \psi, \mathrm{Q}^{\frac12}\boldsymbol{\sigma}(\tau,{\u^*}(\tau))^* \varphi\big) \d \tau \bigg)   \f ({\u^*}|_{[0,s]}) \bigg] =0,
\end{align}
where $\bar{\mathcal{X}}$ is a $\mathbb{U}'$-valued process defined in \eqref{eqn-limit-process}. 

Now we use the idea similar to the one used in \cite[Section 8.4]{DaZ} and \cite{ZBEM}. Consider the operator $\mathfrak{L}: \D(\mathfrak{L}) \subset \mathbb{U}\to \H$ defined by \eqref{eqn-compact-op-L}, the inverse $\mathfrak{L}^{-1}:\H\to \mathbb{U}$ and its dual $(\mathfrak{L}^{-1})^{\prime}: \mathbb{U}^{\prime} \to \H^{\prime}$. From \eqref{eqn-quad-3} and \eqref{eqn-quad-4} with $\psi= \mathfrak{L}^{-1} \zeta $ and $\varphi= \mathfrak{L}^{-1} \phi$, where $\zeta, \phi \in \H$, we infer  that $(\mathfrak{L}^{-1})^{\prime}\bar{\mathcal{X}}(t)$, $t\in[0,T]$ is a continuous square integrable martingale in $\H^{\prime}\cong \H$ with respect to the filtration $\bar{\mathcal{F}} = \{\bar{\mathscr{F}}_{t}\}_{t\in[0,T]}$, where $\bar{\mathscr{F}}_{t}= \sigma \{\u^*(s): s\leq t\}$ with the quadratic variation 
\begin{align}\label{eqn-quad-var-L}
    \langle\!\langle (\mathfrak{L}^{-1})^{\prime}\bar{\mathcal{X}} \rangle\!\rangle_{t} = \int_0^t  (\mathfrak{L}^{-1})^{\prime}\boldsymbol{\sigma}(\tau,{\u^*}(\tau)) \mathrm{Q} \big( (\mathfrak{L}^{-1})^{\prime} \boldsymbol{\sigma}(\tau,{\u^*}(\tau))\big)^*  \d \tau, \;\;\; t\in [0,T].
\end{align}
Particularly, the continuity of the process $(\mathfrak{L}^{-1})^{\prime}\bar{\mathcal{X}}$ follows from the fact that $\u^* \in \C([0,T];\mathbb{U}^{\prime})$. By the martingale representation theorem \cite[Section 8.2]{DaZ}, there exist:
\begin{itemize}
    \item a stochastic basis $(\bar{\bar{\Omega}},\bar{\bar{\mathscr{F}}}, \{\bar{\bar{\mathscr{F}}}\}_{t\in[0,T]} ,\bar{\bar{\mathbb{P}}})$,
    \item a $\mathrm{Q}$-Wiener process $\{\bar{\bar{\mathrm{W}}}(t)\}_{t\geq 0}$ defined on this basis,
    \item and a progressively measurable process $\bar{\bar{\u}}$ such that 
\begin{align}\label{eqn-MRT}
   & (\mathfrak{L}^{-1})^{\prime}\bar{\bar{\u}}(t) - (\mathfrak{L}^{-1})^{\prime}\bar{\bar{\u}}(0) +  \mu (\mathfrak{L}^{-1})^{\prime} \int_0^t \A{\bar{\bar{\u}}}(s)\d s + (\mathfrak{L}^{-1})^{\prime}\int_0^t\B({\bar{\bar{\u}}}(s))\d s 
   \nonumber\\ & \quad+ \alpha(\mathfrak{L}^{-1})^{\prime}\int_0^t {\bar{\bar{\u}}}(s)\d s   +\beta (\mathfrak{L}^{-1})^{\prime} \int_0^t\mathcal{C}(\bar{\bar{\u}}(s))\d s
    \nonumber\\ & =  (\mathfrak{L}^{-1})^{\prime} \int_0^t\mathbf{F}(s)\d s +  \int_0^t (\mathfrak{L}^{-1})^{\prime} \boldsymbol{\sigma}(s,\bar{\bar{\u}}(s))\d \bar{\bar{\mathrm{W}}}(s).
\end{align}
\end{itemize}
However,
    \begin{align*}
        \int_0^t (\mathfrak{L}^{-1})^{\prime} \boldsymbol{\sigma}(s,\bar{\bar{\u}}(s))\d \bar{\bar{\mathrm{W}}}(s) = (\mathfrak{L}^{-1})^{\prime}  \int_0^t \boldsymbol{\sigma}(s,\bar{\bar{\u}}(s))\d \bar{\bar{\mathrm{W}}}(s).
    \end{align*}
    Hence for all $t\in[0,T]$ and all $\varphi\in\mathbb{U}$
    \begin{align}\label{eqn-weak-SCBF-U}
        & (\bar{\bar{\u}}(t), \varphi)_{\H} - (\bar{\bar{\u}}(0),\varphi)_{\H} +  \mu  \int_0^t \langle\A{\bar{\bar{\u}}}(s), \varphi\rangle \d s + \int_0^t \langle \B({\bar{\bar{\u}}}(s)), \varphi\rangle \d s 
   \nonumber\\ & \quad+ \alpha \int_0^t ({\bar{\bar{\u}}}(s), \varphi )_{\H}\d s   +\beta  \int_0^t \langle \mathcal{C}(\bar{\bar{\u}}(s)), \varphi\rangle \d s
    \nonumber\\ & = \left\langle  \int_0^t\mathbf{F}(s)\d s, \varphi\right\rangle + \left\langle \int_0^t \boldsymbol{\sigma}(s,\bar{\bar{\u}}(s))\d \bar{\bar{\mathrm{W}}}(s), \varphi\right\rangle.
    \end{align}
Since $\mathbb{U}$ is densely embedded in $\V\cap\wi\L^{r+1}$, one can obtain that \eqref{eqn-weak-SCBF-U} holds for all $\varphi\in\V\cap\wi\L^{r+1}$. 
Hence, we deduce that the system $((\bar{\bar{\Omega}},\bar{\bar{\mathscr{F}}},\{\bar{\bar{\mathscr{F}}}_t\}_{t\geq 0},\bar{\bar{\mathbb{P}}}),\bar{\bar{\u}},\bar{\bar{\W}})$ is a martingale solution of the stochastic system \eqref{32}, which completes the proof. 
\end{proof}  

\section{Existence and Uniqueness of Strong Solutions}\label{sec5}\setcounter{equation}{0}  
For all the cases given in Table \ref{Table-2},  the weak martingale solution of SCBFEs obtained in  Theorem \ref{thm3.4} has stronger regularity properties. We prove that $\bar{\mathbb{P}}$-a.s., the trajectories are continuous $\H$-valued function defined on $[0, T ]$. Moreover, for all the cases given in Table \ref{Table-3}, by showing the pathwise uniqueness of weak martingale solutions, we use the classical Yamada-Watanabe argument to show the existence of a unique strong solution and uniqueness in law. The existence and uniqueness of a strong solution using global monotonicity property of the linear and nonlinear operators and a stcohastic generalization of the Minty-Browder technique is established  in the work \cite{KK+MTM-SCBF}. 
\begin{proposition}\label{prop5.1}
	 Let $d=2$, $r\in[1,3]$, $\u_0\in\H$, $\mathbf{F}\in \mathrm{L}^{2}(0,T;\V')$ and Hypothesis \ref{hyp} be satisfied. Let  $$((\bar{\Omega},\bar{\mathscr{F}},\{\bar{\mathscr{F}}_t\}_{t\geq 0},\bar{\mathbb{P}}),\bar{\u},\bar{\W})$$ be a weak martingale solution for the stochastic system \eqref{32} such that 
	\begin{align}\label{5p1}
		\bar{\E}\left[\sup_{t\in[0,T]}\|\bar{\u}(t)\|_{\H}^2+  \mu \int_0^T\|\nabla\bar{\u}(t)\|_{\H}^2\d t + \alpha \int_0^T\|\bar{\u}(t)\|_{\H}^2\d t + 2\beta\int_0^T\|\bar{\u}(t)\|_{\widetilde{\L}^{r+1}}^{r+1}\d t\right]<+\infty. 
	\end{align}
Then, for $\bar{\mathbb{P}}$-almost all $\omega\in\bar{\Omega}$, the trajectory $\bar{\u}(\cdot,\omega)$ is continuous $\H$-valued function defined on $[0, T ]$. Moreover, $\bar{\u}(\cdot)$ satisfies the following It\^o formula (energy equality): 
	\begin{align}\label{5p2}
	&	\|\bar{\u}(t)\|_{\H}^2+2\mu \int_0^t\|\nabla\bar{\u}(s)\|_{\H}^2\d s +2\alpha \int_0^t\|\bar{\u}(s)\|_{\H}^2\d s +2\beta\int_0^t\|\bar{\u}(s)\|_{\widetilde{\L}^{r+1}}^{r+1}\d s\nonumber\\& = \|{\u_0}\|_{\H}^2 + \int_0^t \left\langle  \mathbf{F}(s),\bar{\u}(s) \right\rangle \d s + 2 \int_0^{t}(\boldsymbol{\sigma}(s,\bar{\u}(s))\d\bar{\W}(s),\bar{\u}(s))+  \int_0^{t}\|\boldsymbol{\sigma}(s,\bar{\u}(s))\|_{\mathcal{L}_{\Q}}^2\d s,
\end{align}
for all $t\in[0,T]$, $\bar{\mathbb{P}}$-a.s. 
\end{proposition}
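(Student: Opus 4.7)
The approach I would take is to apply It\^o's formula to the norm $\|\bar{\u}(t)\|_{\H}^2$ via a Galerkin truncation \emph{at the level of the limit process} $\bar{\u}$, and then pass to the limit. Specializing the test function identity \eqref{3.2} to $\v = w_j$, the $j$-th Stokes eigenfunction, shows that each scalar process $\eta_j(t) := (\bar{\u}(t), w_j)$ is a real-valued semimartingale whose canonical decomposition can be read off from \eqref{3.2}. Applying the classical finite-dimensional It\^o formula to $f(\eta_1,\ldots,\eta_n) = \sum_{j=1}^n \eta_j^2 = \|\Pi_n\bar{\u}(t)\|_{\H}^2$ yields an energy identity at the $n$-th level in which the deterministic drift pairs $\mu\A\bar{\u} + \B(\bar{\u}) + \beta\mathcal{C}(\bar{\u})$ against $\Pi_n\bar{\u}(s)$, and the stochastic contributions involve the projected coefficients $\Pi_n\boldsymbol{\sigma}$ and $\Pi_n\boldsymbol{\gamma}$.

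The core task is then the passage $n\to\infty$ in this identity. The Stokes pairing $\mu\int_0^t(\nabla\bar{\u},\nabla\Pi_n\bar{\u})\d s\to\mu\int_0^t\|\bar{\u}\|_{\V}^2\d s$ follows by dominated convergence using $\bar{\u}\in\mathrm{L}^2(0,T;\V)$. For the convective pairing, I would exploit the antisymmetry $b(\bar{\u},\bar{\u},\bar{\u})=0$ to rewrite $\langle\B(\bar{\u}),\Pi_n\bar{\u}\rangle = b(\bar{\u},\bar{\u},\Pi_n\bar{\u}-\bar{\u})$, and estimate via the 2D Ladyzhenskaya inequality plus $\|\Pi_n\bar{\u}-\bar{\u}\|_{\V}\to 0$ to show this term vanishes in the limit. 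For the Forchheimer pairing, the target identity $\int_0^t\langle\mathcal{C}(\bar{\u}),\Pi_n\bar{\u}\rangle\d s\to\int_0^t\|\bar{\u}\|_{\wi\L^{r+1}}^{r+1}\d s$ requires convergence $\Pi_n\bar{\u}\to\bar{\u}$ in $\wi\L^{r+1}$, which I would obtain by invoking the special eigenspace approximation of \cite{CLF} that is simultaneously uniformly bounded and convergent in both $\V$ and $\wi\L^{r+1}$. The Wiener integral and its trace-correction $\int_0^t\|\Pi_n\boldsymbol{\sigma}\|_{\mathcal{L}_{\Q}}^2\d s$, together with the compensated Poisson integral and the companion jump sum $\int_0^t\int_{\Z}\|\Pi_n\boldsymbol{\gamma}\|_{\H}^2\bar{\pi}(\d s,\d z)$, then converge in $\mathrm{L}^2(\bar{\Omega})$ by It\^o's isometry and Hypothesis \ref{hyp} (H.2)--(H.3), combined with dominated convergence against the uniform bound \eqref{5p1}.

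The main obstacle is precisely the Forchheimer pairing: a generic spectral projector onto Stokes eigenfunctions is not a priori bounded on $\wi\L^{r+1}$ for $r\neq 1$, and so the naive $\Pi_n$ will not close the argument. The construction of \cite{CLF} resolves this by producing a modified finite-dimensional approximation with the requisite joint $(\V,\wi\L^{r+1})$-convergence, and this is the single essential ingredient that must be handled carefully. Once \eqref{5p2} is established, the path regularity follows at once: the right-hand side of \eqref{5p2} is a sum of processes that are c\`adl\`ag in $t$, $\bar{\mathbb{P}}$-a.s. (the Lebesgue and compensator integrals are continuous, the Wiener integral is continuous, and the two Poisson integrals are c\`adl\`ag), so $t\mapsto\|\bar{\u}(t)\|_{\H}^2$ is c\`adl\`ag. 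Combined with the weak-$\H$ c\`adl\`ag property already guaranteed by $\bar{\u}\in\D([0,T];\H_w)$, a standard argument shows that $\bar{\u}$ admits an $\H$-valued c\`adl\`ag modification on $[0,T]$, which yields the claimed trajectory regularity.
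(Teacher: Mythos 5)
Your argument is essentially viable, but it is a genuinely different route from the one the paper takes for this proposition. The paper exploits the restriction $d=2$, $r\in[1,3]$ through a splitting: it first solves the auxiliary \emph{linear} stochastic Stokes equation \eqref{5p3} driven by $\boldsymbol{\sigma}(\cdot,\bar{\u})$ and $\boldsymbol{\gamma}(\cdot,\bar{\u},\cdot)$, obtaining $\bar{\y}\in\D([0,T];\H)\cap\mathrm{L}^2(0,T;\V)$; it then observes that in 2D with $r\le 3$ the Gagliardo--Nirenberg inequality automatically yields $\bar{\y}\in\mathrm{L}^{r+1}(0,T;\wi\L^{r+1})$, and identifies $\bar{\u}-\bar{\y}$ pathwise with the unique weak solution $\widetilde{\v}\in\C([0,T];\H)$ of the deterministic system \eqref{5p5} via Theorem 3.7 of \cite{KKMTM}. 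The regularity is inherited from $\widetilde{\v}$ and $\bar{\y}$, and \eqref{5p2} is assembled from the deterministic energy equality for $\widetilde{\v}$, It\^o's formula for $\|\bar{\y}\|_{\H}^2$, and the product formula for $(\widetilde{\v},\bar{\y})$. Your strategy --- applying the finite-dimensional It\^o formula to $\sum_{j\le n}(\bar{\u},w_j)^2$ and passing $n\to\infty$ --- is instead essentially the strategy the paper reserves for Proposition \ref{prop5.3}, where it is implemented with the weighted eigenspace approximation $\Pi_{1/n}$ of \cite{CLF} and an additional time mollification. What your route buys is self-containedness: you need neither the deterministic well-posedness theory of \cite{KKMTM} nor the solvability of the stochastic Stokes system. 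What the paper's route buys is that the hard analysis is delegated to known results and the dichotomy $r\le 3$ versus $r>3$ becomes transparent: the splitting fails for $r>3$ precisely because $\bar{\y}\in\mathrm{L}^{r+1}(0,T;\wi\L^{r+1})$ is no longer available, as the Remark following the proposition explains.

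Two points in your write-up need care. First, in the range $d=2$, $r\in[1,3]$ you do not actually need the \cite{CLF} approximation: since $\Pi_n$ contracts both the $\H$- and the $\V$-norm, the 2D interpolation $\|\w\|_{\wi\L^{r+1}}^{r+1}\leq C\|\w\|_{\H}^{2}\|\w\|_{\V}^{r-1}$ gives both $\Pi_n\bar{\u}(s)\to\bar{\u}(s)$ in $\wi\L^{r+1}$ and the dominating bound $\|\bar{\u}(s)\|_{\wi\L^{r+1}}^{r}\|\Pi_n\bar{\u}(s)\|_{\wi\L^{r+1}}\leq C\sup_{t}\|\bar{\u}(t)\|_{\H}^{2}\,\|\bar{\u}(s)\|_{\V}^{r-1}$, which is integrable in $s$ for $r\leq 3$; by contrast the cruder bound $\|\Pi_n\bar{\u}\|_{\wi\L^{r+1}}\leq C\|\bar{\u}\|_{\V}$ alone would require $\bar{\u}\in\mathrm{L}^{r+1}(0,T;\V)$, which is not available. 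Second, the final step --- upgrading the identity \eqref{5p2}, obtained a.s.\ for each fixed $t$, together with $\bar{\u}\in\D([0,T];\H_w)$ to a strongly c\`adl\`ag modification --- should be spelled out: one uses that weak right-continuity plus right-continuity of the norm implies strong right-continuity, with the analogous statement for left limits obtained by matching the jumps of $\|\bar{\u}(\cdot)\|_{\H}^2$ with those of the Poisson integral. Note also that the conclusion is a c\`adl\`ag, not continuous, modification in the presence of jumps, consistent with the abstract.
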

\begin{proof}
	If $\bar{\u}$ is a weak martingale solution of the stochastic system \eqref{32}, then in particular $\bar{\u}\in\C([0,T];\H_w)\cap\mathrm{L}^2(0,T;\V)\cap\mathrm{L}^{r+1}(0,T;\wi\L^{r+1}),$  $\bar{\mathbb{P}}$-a.s. and 
	\begin{align}
		\bar{\u}(t)&=\u_0-\int_0^t[\mu\A\bar{\u}(s)+\B(\bar{\u}(s))+\alpha\bar{\u}(s)+\beta\mathcal{C}(\bar{\u}(s))-\mathbf{F}(s)]\d s+\int_0^t\boldsymbol{\sigma}(s,\bar{\u}(s))\d\bar{\W}(s), \ \text{ in }\ \V',
	\end{align}
since $\wi\L^{\frac{r+1}{r}}\subset\V'$ for $r\in[1,3]$. Let us consider the following  Stokes equations
\begin{align}\label{5p3}
	\bar{\y}(t)=-\mu\int_0^t\A\bar{\y}(s)\d s+\int_0^t\boldsymbol{\sigma}(s,\bar{\u}(s))\d\bar{\W}(s), 
\end{align}
in $\V'$. Since $\A:\V\to\V'$ and Hypothesis \ref{hyp} is also satisfied, by the standard existence results for the stochastic Stokes system (cf. \cite{Me,SPJZ}), we infer that the system \eqref{5p3} has a unique progressively measurable solution $\bar{\y}$ such that $\bar{\y}\in\C([0,T];\H)\cap\mathrm{L}^2(0,T;\V)$, $\bar{\mathbb{P}}$-a.s.  and
\begin{align}
	\bar{\E}\left[\sup_{t\in[0,T]}\|\bar{\y}(t)\|_{\H}^2+\mu\int_0^T\|\bar{\y}(t)\|_{\V}^2\d t\right]\leq C(K,T)<\infty. 
\end{align}
For $d=2$ and $r\in[1,3]$, an application of the Gagliardo-Nirenberg interpolation inequality yields 
\begin{align}
	\int_0^T\|\bar{\y}(t)\|_{\wi\L^{r+1}}^{r+1}\d t&\leq C\int_0^T\|\bar{\y}(t)\|_{\H}^{2}\|\bar{\y}(t)\|_{\V}^{r-1}\d t\leq CT^{\frac{3-r}{2}}\sup_{t\in[0,T]}\|\bar{\y}(t)\|_{\H}^2\left(\int_0^T\|\bar{\y}(t)\|_{\V}^2\d t\right)^{\frac{r-1}{2}}. 
\end{align}
Let us define 
\begin{align*}
	\v(t):=\bar{\u}(t)-\bar{\y}(t),\ t\in[0,T].
\end{align*}
For $\bar{\mathbb{P}}$-a.a. $\omega\in\bar{\Omega},$ the function $\v = \v(\cdot,\omega)$ is a weak solution of the following deterministic equation (cf. \cite{KKMTM-DCDSB}):
\begin{equation}\label{5p5}
	\left\{
	\begin{aligned}
		\frac{\d \v(t)}{\d  t}&=-\mu\A\v(t)-\B(\v(t)+\bar{\y}(t))- \alpha(\v(t)+\bar{\y}(t)) -\beta\mathcal{C}(\v(t)+\bar{\y}(t)) + \mathbf{F}(t),\\
		\v(0)&=\u_0. 
	\end{aligned}
\right.
\end{equation}
Let $\omega\in\bar{\Omega}$ be such that $\bar{\u}(\cdot,\omega)\in\C([0,T];\H_w)\cap\mathrm{L}^2(0,T;\V)\cap\mathrm{L}^{r+1}(0,T;\wi\L^{r+1}),$  and $\bar{\y}(\cdot,\omega)\in \C([0,T];\H)\cap\mathrm{L}^2(0,T;\V)
\cap\mathrm{L}^{r+1}(0,T;\wi\L^{r+1})$,
then we have 
$\v(\cdot,\omega)\in  
\C([0,T];\H_w)\cap\mathrm{L}^2(0,T;\V)\cap\mathrm{L}^{r+1}(0,T;\wi\L^{r+1})$.
Applying the same arguments as in 
\cite[Theorem 3.7]{KKMTM-DCDSB}, we can establish the energy equality for 
$\v$ and conclude that 
$\v\in \C([0,T];\H)$. Hence, we get
$$\bar{\u}=\v+\bar{\y}\in \C([0,T];\H).$$

We prove the  It\^o formula in the following way. Since ${\v}(\cdot)$ satisfies the energy equality (cf. \cite[Theorem 3.7]{KKMTM-DCDSB}), we have 
\begin{align}
	 \|{\v}(t)\|_{\H}^2
     = \|\u_0\|_{\H}^2-2\int_0^t\langle\mu\A{\v}(s)+\B({\v}(s)+\bar{\y}(s))+\alpha{\v}(s) +\beta\mathcal{C}({\v}(s)+\bar{\y}(s)) -\mathbf{F}(s),{\v}(s)\rangle\d s,
\end{align}
for all $t\in[0,T]$. An application of It\^o's formula to the process $\|\bar{\y}(\cdot)\|_{\H}^2$ yields, $\bar{\mathbb{P}}$-a.s.,
\begin{align}
	\|\bar{\y}(t)\|_{\H}^2&=-2\mu\int_0^t\langle\A\bar{\y}(s),\bar{\y}(s)\rangle\d s+2\int_0^t(\boldsymbol{\sigma}(s,\bar{\u}(s))\d\bar{\W}(s),\bar{\y}(s)) +\int_0^t\|\boldsymbol{\sigma}(s,\bar{\u}(s))\|_{\mathcal{L}_{\Q}}^2\d s,
\end{align}
for all $t\in[0,T]$. Using It\^o's product formula, we also have, $\bar{\mathbb{P}}$-a.s.,
\begin{align}
	({\v}(t),\bar{\y}(t))&=-\int_0^t\langle\mu\A{\v}(s)+\B({\v}(s)+\bar{\y}(s)) + \alpha ({\v}(s)+\bar{\y}(s))+\beta\mathcal{C}({\v}(s)+\bar{\y}(s)) - \mathbf{F}(s),\bar{\y}(s)\rangle\d s\nonumber\\&\quad -\mu\int_0^t\langle\A\bar{\y}(s),{\v}(s)\rangle\d s+\int_0^t(\boldsymbol{\sigma}(s,\bar{\u}(s))\d\bar{\W}(s),{\v}(s)), 
\end{align}
for all $t\in[0,T]$. Using the fact that $\bar{\u}(t)={\v}(t)+\bar{\y}(t), \ t\in[0,T],$ and combining the above equations, we obtain, $\bar{\mathbb{P}}$-a.s.,
\begin{align}
	\|\bar{\u}(t)\|_{\H}^2&=\|{\v}(t)\|_{\H}^2+\|\bar{\y}(t)\|_{\H}^2+2({\v}(t),\bar{\y}(t))\nonumber\\&=\|\u_0\|_{\H}^2-2\mu\int_0^t\langle\mu\A\bar{\u}(s)+\B(\bar{\u}(s)) + \alpha \bar{\u}(s) +\beta\mathcal{C}(\bar{\u}(s)) - \mathbf{F}(s),\bar{\u}(s)\rangle\d s
    \nonumber\\ & \quad +\int_0^t\|\boldsymbol{\sigma}(s,\bar{\u}(s))\|_{\mathcal{L}_{\Q}}^2\d s +2\int_0^t(\boldsymbol{\sigma}(s,\bar{\u}(s))\d\bar{\W}(s),\bar{\u}(s)), 
\end{align}
for all $t\in[0,T]$, which completes the proof of \eqref{5p2}. 
\end{proof}
\begin{remark}
	The above mentioned method will not work for $d=2,3$ and $r\in(3,\infty)$ (for $d=3$ case $r=3$ also),  since we need $\bar{\y}\in\mathrm{L}^{r+1}(0,T;\wi\L^{r+1})$, $\bar{\mathbb{P}}$-a.s. for the solvabililty of the system \eqref{5p5}. Under Hypothesis \ref{hyp}, it is not possible to obtain $\bar{\y}\in\mathrm{L}^{r+1}(\Omega;\mathrm{L}^{r+1}(0,T;\wi\L^{r+1}))$. 
\end{remark}

For the cases $d=2,3$ with $r\in[3,\infty)$, we provide the next result from \cite{KK+MTM-SCBF}.

\begin{proposition}\label{prop5.3}
	 Let $\u_0\in\H$, $\mathbf{F}\in \mathrm{L}^{2}(0,T;\V')$ and Hypothesis \ref{hyp} be satisfied. Let  $$((\bar{\Omega},\bar{\mathscr{F}},\{\bar{\mathscr{F}}_t\}_{t\geq 0},\bar{\mathbb{P}}),\bar{\u},\bar{\W})$$ be a martingale solution for the stochastic system \eqref{32} such that \eqref{5p1} be satisfied.
	Then, for $\bar{\mathbb{P}}$-a.a. $\omega\in\bar{\Omega}$, the trajectory $\bar{\u}(\cdot,\omega)$ is continuous $\H$-valued function defined on $[0, T ]$ and $\bar{\u}(\cdot)$ satisfies   It\^o's formula \eqref{5p2}.
\end{proposition}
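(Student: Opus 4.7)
The plan is to derive the It\^o formula \eqref{5p2} by a finite-dimensional approximation based on the eigenvectors $\{w_k\}_{k\geq 1}$ of the Stokes operator $\A$, following the strategy of \cite{MTM4,MTM6}. The crucial ingredient is the approximation result from \cite{CLF}: the Galerkin projections $\Pi_n$ built from these eigenvectors satisfy, for every $\v\in\V\cap\wi\L^{r+1}$, that $\Pi_n\v\to\v$ strongly in $\V$ and in $\wi\L^{r+1}$ simultaneously, with operator-norm bounds on both $\V$ and $\wi\L^{r+1}$ uniform in $n$. This simultaneous approximation is what replaces the stochastic Stokes decomposition employed in Proposition \ref{prop5.1}, which breaks down here because one cannot place the auxiliary process $\bar\y$ in $L^{r+1}(0,T;\wi\L^{r+1})$ when $r>3$.

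Set $\bar{\u}_n(t):=\Pi_n\bar{\u}(t)$. First I would test the weak-form identity \eqref{3.2} against $w_1,\ldots,w_n$; since each $w_k\in\D(\A)\subset\V\cap\wi\L^{r+1}$, this produces the $\H_n$-valued semimartingale identity
\begin{align*}
\bar{\u}_n(t)&=\Pi_n\u_0-\int_0^t\Pi_n\bigl[\mu\A\bar{\u}(s)+\B(\bar{\u}(s))+\beta\mathcal{C}(\bar{\u}(s))\bigr]\d s\\
&\quad+\int_0^t\Pi_n\boldsymbol{\sigma}(s,\bar{\u}(s))\d\bar{\W}(s)+\int_0^t\int_{\Z}\Pi_n\boldsymbol{\gamma}(s,\bar{\u}(s-),z)\widetilde{\bar{\pi}}(\d s,\d z).
\end{align*}
Applying the classical finite-dimensional It\^o formula to $\|\bar{\u}_n(t)\|_{\H}^2$ then yields an identity of the same shape as \eqref{5p2}, in which the deterministic integrands pair the projected operators $\Pi_n\A\bar{\u}$, $\Pi_n\B(\bar{\u})$, $\Pi_n\mathcal{C}(\bar{\u})$ against $\bar{\u}_n$ and the noise terms involve $\Pi_n\boldsymbol{\sigma}$ and $\Pi_n\boldsymbol{\gamma}$; using self-adjointness of $\Pi_n$ I would rewrite $\langle\Pi_n\mathcal{C}(\bar{\u}),\bar{\u}_n\rangle=\langle\mathcal{C}(\bar{\u}),\bar{\u}_n\rangle$, and analogously for $\B$ and $\A$.

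The main step, and the principal obstacle, is passing to the limit $n\to\infty$. The viscous term converges by $\bar{\u}_n\to\bar{\u}$ in $L^2(0,T;\V)$. The Navier--Stokes contribution is controlled via \eqref{2.9a} and \eqref{lip}, H\"older's inequality, the uniform bound \eqref{5p1}, and dominated convergence. The Forchheimer term is the delicate point: bounding
\[
|\langle\mathcal{C}(\bar{\u}(s)),\bar{\u}_n(s)-\bar{\u}(s)\rangle|\leq C\|\bar{\u}(s)\|_{\wi\L^{r+1}}^{r}\|\bar{\u}_n(s)-\bar{\u}(s)\|_{\wi\L^{r+1}}
\]
via \eqref{213}, and using the uniform bound $\|\Pi_n\bar{\u}\|_{\wi\L^{r+1}}\leq C\|\bar{\u}\|_{\wi\L^{r+1}}$ as a domination, the limit follows from $\bar{\u}_n\to\bar{\u}$ in $L^{r+1}(0,T;\wi\L^{r+1})$ and the dominated convergence theorem. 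For the noise terms, Hypothesis \ref{hyp} and dominated convergence give $\Pi_n\boldsymbol{\sigma}(\cdot,\bar{\u})\to\boldsymbol{\sigma}(\cdot,\bar{\u})$ in $L^2(\bar{\Omega}\times[0,T];\mathcal{L}_{\Q})$ and $\Pi_n\boldsymbol{\gamma}\to\boldsymbol{\gamma}$ in $\mathfrak{L}^2_{\lambda,T}$; It\^o's isometry and Vitali's theorem then deliver the $L^2(\bar{\Omega})$-convergence of the Wiener integral, of the compensated Poisson integral, and of the quadratic variation correction $\int_0^t\int_{\Z}\|\Pi_n\boldsymbol{\gamma}(s,\bar{\u}(s-),z)\|_{\H}^2\bar{\pi}(\d s,\d z)$.

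Once \eqref{5p2} is established, its right-hand side is the sum of an absolutely continuous process, a continuous local martingale and a c\`adl\`ag purely discontinuous martingale, hence $t\mapsto\|\bar{\u}(t)\|_{\H}^2$ has a c\`adl\`ag version. Combining this with $\bar{\u}\in\D([0,T];\H_w)$ and the standard fact that in a Hilbert space weak c\`adl\`ag plus c\`adl\`ag norm forces strong c\`adl\`ag (cf. \cite{Me}) yields the $\H$-valued c\`adl\`ag modification of $\bar{\u}$ claimed in the proposition. The decisive difficulty throughout is the Forchheimer nonlinearity for $r>3$: it demands projections that converge in $\wi\L^{r+1}$, not merely in $\V$, which is precisely why a generic Galerkin basis does not suffice and the simultaneous approximation of \cite{CLF} is indispensable.
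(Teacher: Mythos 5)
Your route is genuinely different from the paper's, and it contains one real gap located exactly at the point you call crucial. You attribute to ``the Galerkin projections $\Pi_n$ built from these eigenvectors'' uniform operator-norm bounds on $\wi\L^{r+1}$ together with strong convergence $\Pi_n\v\to\v$ in $\wi\L^{r+1}$. For a bounded domain this is precisely what is \emph{not} known for the $\H$-orthogonal projections onto Stokes eigenspaces; the entire point of \cite{CLF} is to circumvent this by introducing the smoothed operators $\Pi_{1/n}\v=\sum_{\lambda_j<n^2}e^{-\lambda_j/n}(\v,e_j)e_j$, which \emph{are} uniformly bounded on $\L^p(\mathcal{O})$ and converge strongly there, but are \emph{not} projections. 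As written, your limit passage in the Forchheimer term $\int_0^t\langle\mathcal{C}(\bar{\u}(s)),\bar{\u}_n(s)-\bar{\u}(s)\rangle\d s$ rests on the unavailable bound $\|\Pi_n\bar{\u}\|_{\wi\L^{r+1}}\leq C\|\bar{\u}\|_{\wi\L^{r+1}}$ and does not close. The repair is to run your finite-dimensional argument with $\Pi_{1/n}$ in place of $\Pi_n$: since $\Pi_{1/n}\bar{\u}(t)$ is still a finite linear combination of the real-valued semimartingales $(\bar{\u}(t),e_j)$ (each upgraded from a fixed-$t$ identity to indistinguishability, which is immediate because both sides are c\`adl\`ag), the finite-dimensional It\^o formula applies to $\|\Pi_{1/n}\bar{\u}(t)\|_{\H}^2$; but because $\Pi_{1/n}^2\neq\Pi_{1/n}$, the drift then pairs against $\Pi_{1/n}^2\bar{\u}$ rather than $\Pi_{1/n}\bar{\u}$, i.e.\ $\langle\Pi_{1/n}\mathcal{C}(\bar{\u}),\Pi_{1/n}\bar{\u}\rangle=\langle\mathcal{C}(\bar{\u}),\Pi_{1/n}^2\bar{\u}\rangle$ replaces the identity you wrote. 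This is harmless since $\Pi_{1/n}^2$ enjoys the same approximation and uniform boundedness properties, and your dominated-convergence argument then goes through pathwise.

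With that correction your proof is a more direct alternative to the paper's. The paper regularizes twice --- in space by $\Pi_{1/n}$ and in time by an even mollifier $\zeta^h$ --- and applies It\^o's product formula to the pairing $(\bar{\u}^h_n(t),\bar{\u}(t))$, passing first $n\to\infty$ and then $h\to0$; the normalization $\int_0^h\zeta^h(s)\d s=\tfrac12$ and the symmetry of $\zeta$ produce the limit $\tfrac12\|\bar{\u}(t_1)\|_{\H}^2$ at continuity points, while jump instants require a separate computation in which correction terms $\tfrac12\|\bar{\u}(t_1)-\bar{\u}(t_1-)\|_{\H}^2$ appear and cancel. Your single spatial regularization dispenses with the time mollification, the product formula, and the case distinction between jump and continuity points altogether, because the finite-dimensional It\^o formula already accounts for the jumps through the term $\int_0^t\int_{\Z}\|\Pi_{1/n}\boldsymbol{\gamma}(s,\bar{\u}(s-),z)\|_{\H}^2\pi(\d s,\d z)$, whose limit follows from the pointwise contraction $\|\Pi_{1/n}\boldsymbol{\gamma}\|_{\H}\leq\|\boldsymbol{\gamma}\|_{\H}$ and dominated convergence with respect to the random measure $\pi$. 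Your closing observation that the conclusion should be a c\`adl\`ag (not continuous) $\H$-valued modification is correct and is the natural statement for jump noise.
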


\begin{proof}
    For the proof, we refer readers to {\cite[Theorem 3.10, \textbf{Step (4)}]{KK+MTM-SCBF}}.
\end{proof}

Let us now prove the pathwise uniqueness of weak martingale solutions of the stochastic system \eqref{32} for all the cases given in Table \ref{Table-3}.

\begin{proposition}\label{prop5.4}
	Let Hypothesis \ref{hyp} be satisfied. If $\bar{\u}_1$ and $\bar{\u}_2$ are two weak martingale solutions of the stochastic system \eqref{32} defined on the same filtered probability space $(\bar{\Omega},\bar{\mathscr{F}},\{\bar{\mathscr{F}}_t\}_{t\geq 0},\bar{\mathbb{P}})$ with the same initial data ($\u_0$) and same external forcing ($\mathbf{F}$), then for all the cases given in Table \ref{Table-3}, $\bar{\u}_1(t) = \bar{\u}_2(t)$ $\bar{\mathbb{P}}$-a.s. for all $t \in [0, T ]$.
\end{proposition}

\begin{proof}
	Let $\bar{\u}_1(\cdot)$ and $\bar{\u}_2(\cdot)$ be two weak martingale solutions of the stochastic system \eqref{32} defined on $(\bar{\Omega},\bar{\mathscr{F}},\{\bar{\mathscr{F}}_t\}_{t\geq 0},\bar{\mathbb{P}})$. For $N>0$, let us define 
	\begin{align*}
		\tau_N^1=\inf_{0\leq t\leq T}\Big\{t:\|\bar{\u}_1(t)\|_{\H}\geq N\Big\},\ \tau_N^2=\inf_{0\leq t\leq T}\Big\{t:\|\bar{\u}_2(t)\|_{\H}\geq N\Big\}\text{ and }\tau_N=\tau_N^1\wedge\tau_N^2.
	\end{align*}
	Then, one can show that $\tau_N\to T$ as $N\to\infty$, $\bar{\mathbb{P}}$-a.s. (cf. \cite{KK+MTM-SCBF,MTM6}).
	
	We define $\bar{\z}(\cdot)=\bar{\u}_1(\cdot)-\bar{\u}_2(\cdot)$, and $\widehat{\boldsymbol{\sigma}}(\cdot)=\boldsymbol{\sigma}(\cdot,\bar{\u}_1(\cdot))-\boldsymbol{\sigma}(\cdot,\bar{\u}_2(\cdot))$. Then, $\bar{\z}(\cdot)$ satisfies the system:
	\begin{equation}
		\left\{
		\begin{aligned}
			\d\bar{\z}(t)&=-\left[\mu\A\bar{\z}(t)+\B(\bar{\u}_1(t))-\B(\bar{\u}_2(t)) + \alpha\bar{\z}(t) +\beta(\mathcal{C}(\bar{\u}_1(t))-\mathcal{C}(\bar{\u}_2(t)))\right]\d t +\widehat{\boldsymbol{\sigma}}(t)\d\bar{\W}(t),\\
			\bar{\z}(0)&=\boldsymbol{0},
		\end{aligned}
		\right.
	\end{equation}
for a.e. $t\in[0,T]$ in $\V'+\wi\L^{\frac{r+1}{r}}$. 
\vskip 0.2 cm
\noindent\textbf{Case (1):} {$d=2$ and $r\in[1,3]$.}
Let us first consider the case $d=2$ and $r\in[1,3]$.	We apply It\^o's formula  to the process $\|\bar{\z}(\cdot)\|_{\H}^2$ to find, $\bar{\mathbb{P}}$-a.s.,
	\begin{align}\label{5p13}
	&	\|\bar{\z}(t)\|_{\H}^2+2\mu\int_0^t\|\nabla\bar{\z}(s)\|_{\H}^2\d s +2\alpha\int_0^t\|\bar{\z}(s)\|_{\H}^2\d s
    \nonumber\\ &=\|\bar{\z}(0)\|_{\H}^2 -2\int_0^t\langle\B(\bar{\u}_1(s))-\B(\bar{\u}_2(s)),\bar{\z}(s)\rangle\d s - 2\beta\int_0^t\langle\mathcal{C}(\bar{\u}_1(s))-\mathcal{C}(\bar{\u}_2(s)),\bar{\z}(s)\rangle\d s\nonumber\\&\quad +\int_0^{t} \|\widehat{\boldsymbol{\sigma}}(s)\|^2_{\mathcal{L}_{\Q}}\d
		s + 2\int_0^{t}\left(\widehat{\boldsymbol{\sigma}}(s)\d\W(s),\bar{\z}(s)\right),
	\end{align}
	where $\int_0^{t}\left(\widehat{\boldsymbol{\sigma}}(s)\d\W(s),\bar{\z}(s)\right)$ is a local martingale.
Using H\"older's, Ladyzhenskaya's  and Young's inequalities, we estimate $\langle\B(\bar{\u}_1)-\B(\bar{\u}_2),\bar{\z}\rangle$ as 
\begin{align}\label{5p14}
	\langle\B(\bar{\u}_1)-\B(\bar{\u}_2),\bar{\z}\rangle&=\langle\B(\bar{\z},\bar{\u}_2),\bar{\z}\rangle=-\langle\B(\bar{\z},\bar{\z}),\bar{\u}_2\rangle\leq\|\nabla\bar{\z}\|_{\H}\|\bar{\z}\|_{\wi\L^4}\|\bar{\u}_2\|_{\wi\L^4}\nonumber\\&\leq 2^{1/4}\|\nabla\bar{\z}\|_{\H}^{3/2}\|\bar{\z}\|_{\H}^{1/2}\|\bar{\u}_2\|_{\wi\L^4}\leq\frac{\mu}{2}\|\nabla\bar{\z}\|_{\H}^2 + \frac{27}{16\mu^3}\|\bar{\u}_2\|_{\wi\L^4}^4\|\bar{\z}\|_{\H}^2. 
\end{align}
	Applying (\ref{2.23}) and \eqref{5p14} in \eqref{5p13}, we get,  $\bar{\mathbb{P}}$-a.s.,
	\begin{align}\label{5p15}
		&\|\bar{\z}(t)\|_{\H}^2 + \mu\int_0^t\|\nabla\bar{\z}(s)\|_{\H}^2\d s + 2\alpha \int_0^t\|\bar{\z}(s)\|_{\H}^2\d s +\frac{\beta}{2^{r-2}}\int_0^t\|\bar{\z}(s)\|_{\wi\L^{r+1}}^{r+1}\d s\nonumber\\&\leq\|\bar{\z}(0)\|_{\H}^2  +\frac{27}{8\mu^3}\int_0^t\|\bar{\u}_2(s)\|_{\L^4}^4\|\bar{\z}(s)\|_{\H}^2\d s +\int_0^{t} \|\widehat{\boldsymbol{\sigma}}(s)\|^2_{\mathcal{L}_{\Q}}\d
		s + 2\int_0^{t}\left(\widehat{\boldsymbol{\sigma}}(s)\d\W(s),\bar{\z}(s)\right).
	\end{align}
	Let us apply  It\^o's formula to the process $e^{-\varrho(t)}\|\bar{\z}(t)\|_{\H}^2$, where 
	\begin{align*}
		\varrho(t)=\frac{27}{8\mu^3}\int_0^t\|\bar{\u}_2(s)\|_{\L^4}^4\d s\ \text{ so that }\ \varrho'(t)= \frac{27}{8\mu^3}\|\bar{\u}_2(t)\|_{\L^4}^4, \ \text{ for a.e. } t,
	\end{align*} to obtain, $\bar{\mathbb{P}}$-a.s.,
	\begin{align}\label{5p16}
		&e^{-\varrho(\s)}\|\bar{\z}(\t)\|_{\H}^2+\mu\int_0^{\s}e^{-\varrho(s)}\|\nabla\bar{\z}(s)\|_{\H}^2\d s 
        +2\alpha \int_0^{\s} e^{-\varrho(s)}\|\bar{\z}(s)\|_{\H}^2\d s 
       \nonumber\\ & \quad+\frac{\beta}{2^{r-2}}\int_0^{\t}e^{-\varrho(s)}\|\bar{\z}(s)\|_{\wi\L^{r+1}}^{r+1}\d s\nonumber\\&\leq \|\bar{\z}(0)\|_{\H}^2+\int_0^{\s}e^{-\varrho(s)} \|\widehat{\boldsymbol{\sigma}}(s)\|^2_{\mathcal{L}_{\Q}}\d
		s + 2\int_0^{\s} e^{-\varrho(s)} \left(\widehat{\boldsymbol{\sigma}}(s)\d\W(s),\bar{\z}(s)\right),
	\end{align}
	where $\int_0^{\s} e^{-\varrho(s)} \left(\widehat{\boldsymbol{\sigma}}(s)\d\W(s),\bar{\z}(s)\right)$ is a local martingale. 
	Taking expectation in \eqref{5p16}, and  then using Hypothesis \ref{hyp} (H.3), we deduce
	\begin{align}\label{5p17}
		&\bar{\E}\bigg[e^{-\varrho(\s)}\|\bar{\z}(\s)\|_{\H}^2+\mu\int_0^{\s}e^{-\varrho(s)}\|\nabla\bar{\z}(s)\|_{\H}^2\d s
        +2\alpha\int_0^{\s}e^{-\varrho(s)}\|\bar{\z}(s)\|_{\H}^2\d s 
        \nonumber\\ & \quad+\frac{\beta}{2^{r-2}}\int_0^{\s}e^{-\varrho(s)}\|\bar{\z}(s)\|_{\wi\L^{r+1}}^{r+1}\d s\bigg]\nonumber\\&\leq \bar{\E}\left[\|\bar{\z}(0)\|_{\H}^2\right]+L\bar{\E}\left[\int_0^{\s}e^{-\varrho(s)}\|\bar{\z}(s)\|_{\H}^2\d s\right].
	\end{align}
	An application of Gr\"onwall's inequality in (\ref{5p17}) yields 
	\begin{align}\label{5p18}
		&\bar{\E}\left[e^{-\varrho(\s)}\|\bar{\z}(\s)\|_{\H}^2\right]\leq \bar{\E}\left[\|\bar{\z}(0)\|_{\H}^2\right]e^{LT}.
	\end{align}
	Thus the initial data  $\bar{\u}_1(0)=\bar{\u}_2(0)={\u}_0$ leads to $\bar{\z}(\s)=0$, $\bar{\mathbb{P}}$-a.s. But the fact that $\tau_N\to T$ as $N\to\infty$ provide $\bar{\z}(t)=0$ and hence $\bar{\u}_1(t) = \bar{\u}_2(t)$ for all $t \in[0, T ]$, $\bar{\mathbb{P}}$-a.s., and the pathwise uniqueness follows.
	
	\vskip 0.2 cm
	\noindent\textbf{Case (2):} {$d=2,3$ and $r\in(3,\infty)$.}
Let us now consider the case $d=2,3$ and $r\in(3,\infty)$.	Using H\"older's and Young's inequalities, we estimate $\langle\B(\bar{\u}_1)-\B(\bar{\u}_2),\bar{\z}\rangle=-\B(\bar{\z},\bar{\z}),\bar{\u}_2\rangle$ as  
	\begin{align}\label{5p19}
		|\langle\B(\bar{\z},\bar{\z}),\bar{\u}_2\rangle|&\leq\|\nabla\bar{\z}\|_{\H}\|\bar{\u}_2\bar{\z}\|_{\H}\leq\frac{\mu }{2}\|\nabla\bar{\z}\|_{\H}^2+\frac{1}{2\mu }\||\bar{\u}_2|\bar{\z}\|_{\H}^2.
	\end{align}
	Taking the term $\||\bar{\u}_2|\bar{\z}\|_{\H}^2$ from \eqref{5p19} and using H\"older's and Young's inequalities, we estimate
	\begin{align}\label{5p20}
		&\int_{\mathcal{O}}|\bar{\u}_2(x)|^2|\bar{\z}(x)|^2\d x=\int_{\mathcal{O}}|\bar{\u}_2(x)|^2|\bar{\z}(x)|^{\frac{4}{r-1}}|\bar{\z}(x)|^{\frac{2(r-3)}{r-1}}\d x\nonumber\\&\leq\left(\int_{\mathcal{O}}|\bar{\u}_2(x)|^{r-1}|\bar{\z}(x)|^2\d x\right)^{\frac{2}{r-1}}\left(\int_{\mathcal{O}}|\bar{\z}(x)|^2\d x\right)^{\frac{r-3}{r-1}}\nonumber\\&\leq\frac{\beta\mu }{2}\left(\int_{\mathcal{O}}|\bar{\u}_2(x)|^{r-1}|\bar{\z}(x)|^2\d x\right)+\frac{r-3}{r-1}\left(\frac{4}{\beta\mu (r-1)}\right)^{\frac{2}{r-3}}\left(\int_{\mathcal{O}}|\bar{\z}(x)|^2\d x\right),
	\end{align}
	for $r>3$. Using \eqref{5p20} in \eqref{5p19}, we deduce
	\begin{align}\label{5p21}
		|\langle\B(\bar{\z},\bar{\z}),\bar{\u}_2\rangle|&\leq\frac{\mu }{2}\|\nabla\bar{\z}\|_{\H}^2+\frac{\beta}{4}\||\bar{\u}_2|^{\frac{r-1}{2}}\bar{\z}\|_{\H}^2+\hat{\zeta}\|\bar{\z}\|_{\H}^2,
	\end{align}
where 
\begin{align}\label{eqn-zeta}
    \hat{\zeta}=\frac{r-3}{2\mu(r-1)}\left(\frac{4}{\beta\mu (r-1)}\right)^{\frac{2}{r-3}}.
\end{align}
From \eqref{2.23}, we deduce
	\begin{align}\label{5p2121}
		\beta	\langle\mathcal{C}(\bar{\u}_1)-\mathcal{C}(\bar{\u}_2),\bar{\z}\rangle\geq \frac{\beta}{2}\||\bar{\u}_1|^{\frac{r-1}{2}}\bar{\z}\|_{\H}^2+\frac{\beta}{2}\||\bar{\u}_2|^{\frac{r-1}{2}}\bar{\z}\|_{\H}^2.
	\end{align}
	Thus, using the above two estimates and \eqref{2.23}  in   \eqref{5p13}, we infer, $\bar{\mathbb{P}}$-a.s., 
	\begin{align}\label{5p22}
		&\|\bar{\z}(\s)\|_{\H}^2 +\mu \int_0^{\s}\|\nabla\bar{\z}(s)\|_{\H}^2\d s
        +2\alpha \int_0^{\s}\|\bar{\z}(s)\|_{\H}^2\d s +\frac{\beta}{2^r}\int_0^{\s}\|\bar{\z}(s)\|_{\wi\L^{r+1}}^{r+1}\d s\nonumber\\&\leq\|\bar{\z}(0)\|_{\H}^2 +2\hat{\zeta}\int_0^{\s}\|\bar{\z}(s)\|_{\H}^2\d s +\int_0^{t} \|\widehat{\boldsymbol{\sigma}}(s)\|^2_{\mathcal{L}_{\Q}}\d
		s + \int_0^{t}\left(\widehat{\boldsymbol{\sigma}}(s)\d\W(s),\bar{\z}(s)\right).
	\end{align}
Taking expectation in (\ref{5p22}), and  then using Hypothesis \ref{hyp} (H.3), we obtain  
	\begin{align}\label{5p23}
		&\bar{\E}\left[\|\bar{\z}(\s)\|_{\H}^2+\mu \int_0^{\s}\|\nabla\bar{\z}(s)\|_{\H}^2\d s
        +2\alpha \int_0^{\s}\|\bar{\z}(s)\|_{\H}^2\d s
        +\frac{\beta}{2^r}\int_0^{\s}\|\bar{\z}(s)\|_{\wi\L^{r+1}}^{r+1}\d s\right]\nonumber\\&\leq  \bar{\E}\left[\|\bar{\z}(0)\|_{\H}^2\right]+(L+2\hat{\zeta})\bar{\E}\left[\int_0^{\s}\|\bar{\z}(s)\|_{\H}^2\d s\right].
	\end{align}
	Applying  Gr\"onwall's inequality in (\ref{5p23}),  we arrive at 
	\begin{align}\label{5p24}
		&\bar{\E}\left[\|\bar{\z}(\s)\|_{\H}^2\right]\leq \bar{\E}\left[\|\bar{\z}(0)\|_{\H}^2\right]e^{(L+2\hat{\zeta})T}.
	\end{align}
	Thus the initial data  $\bar{\u}_1(0)=\bar{\u}_2(0)={\u}_0$ leads to $\bar{\z}(\s)=0$, $\bar{\mathbb{P}}$-a.s. Using the fact that $\tau_N\to T$, $\bar{\mathbb{P}}$-a.s., implies $\bar{\z}(t)=0$ and hence $\bar{\u}_1(t) = \bar{\u}_2(t)$, $\bar{\mathbb{P}}$-a.s., for all $t \in[0, T ]$, and the pathwise uniqueness follows. 
	
	\vskip 0.2 cm
	\noindent\textbf{Case (3):} {$d=r=3$ with $2\beta\mu\geq 1$.}
	For this case, the estimate 
	\begin{align}\label{5p25}
		|\langle\B(\bar{\z},\bar{\z}),\bar{\u}_2\rangle|&\leq\||\bar{\u}_2|\bar{\z}\|_{\H}\|\nabla\bar{\z}\|_{\H} \leq\theta\mu \|\nabla\bar{\z}\|_{\H}^2+\frac{1}{4\mu\theta}\||\bar{\u}_2|\bar{\z}\|_{\H}^2,
	\end{align}
for some $0<\theta\leq 1$,	helps us to obtain 
	\begin{align}\label{5p60}
		&\bar{\E}\bigg[\|\bar{\z}(\s)\|_{\H}^2+2\mu(1-\theta) \int_0^{\s}\|\nabla\bar{\z}(s)\|_{\H}^2\d s
        +2\alpha \int_0^{\s}\|\bar{\z}(s)\|_{\H}^2\d s 
        \nonumber\\ &\quad +\left(\beta-\frac{1}{2\mu\theta}\right)\int_0^{\s}\|\bar{\z}(s)\|_{\wi\L^{r+1}}^{r+1}\d s\bigg]\nonumber\\&\leq  \bar{\E}\left[\|\bar{\z}(0)\|_{\H}^2\right]+L\bar{\E}\left[\int_0^{\s}\|\bar{\z}(s)\|_{\H}^2\d s\right],
	\end{align}
	and the pathwise uniqueness follows. 
\end{proof}

\begin{proof}[Proof of Theorem \ref{thm3.5}]
	By Theorem \ref{thm3.4},  we infer the existence of a martingale solution and by Proposition \ref{prop5.4}, we know that the weak solutions are pathwise unique. Therefore, assertion (2) of  Theorem \ref{thm3.5} follow from \cite[Theorems 2]{OMa} and  \cite[Theorem 8]{HZa}. Assertion (1) is a direct consequence of Propositions \ref{prop5.1} and \ref{prop5.3}.
\end{proof}

\section{Invariant Measures and Ergodicity}\label{sec6}\setcounter{equation}{0}

In this section, we aim to establish the existence and uniqueness of invariant probability measures associated with the stochastic system \eqref{31} in unbounded domains under Hypothesis \ref{hyp} on the noise coefficient $\boldsymbol{\sigma}(\cdot,\cdot)$.

\subsection{Existence of at least one invariant measure} 
In this subsection, we prove that there exists at least one invariant probability  measure associated with the stochastic system \eqref{32}. 

For any bounded Borel function $\psi\in \mathcal{B}_b(\H)$ and 
$t\geq 0$, we define
\begin{align}\label{eqn-semigroup}
    (\mathrm{T}_t\psi)(\u_0) = \E \big[ \psi(\u(t,\u_0)) \big], \;\;\; \u_0\in\H,
\end{align}
where $\u(\cdot,\u_0)$ is the pathwise unique strong solution to the stochastic system \eqref{32} on $[0,\infty)$ corresponding to initial data $\u_0$.

By Propositions~\ref{prop5.1} and \ref{prop5.3}, the trajectories $\u(\cdot,\u_0)$ belong to $\C([0,T]; \mathbb{H})$. Consequently, the family $\{\mathrm{T}_t\}_{t\geq0}$ forms a stochastically continuous semigroup on the Banach space $\C_b(\H)$. In particular, for $t>0$, for every $\psi\in\C_b(\H)$ and every $\u_0\in\H$,
\begin{align*}
    \lim_{t\to0}\mathrm{T}_t \psi(\u_0)= \psi(\u_0).
\end{align*}

As a direct consequence of Theorem~\ref{thm3.5}, we obtain the following result, whose proof follows the same arguments as in \cite[Section 9.2]{DaZ}. Since the reasoning is standard, we omit the details here.
\begin{proposition}
   Let $\mathbf{F}\in \mathrm{L}^{2}(0,T;\V')$  and Hypothesis \ref{hyp} hold. Then, the family $\{\mathrm{T}_t\}_{t\geq0}$ is Markov, that is, $\mathrm{T}_{t+s}= \mathrm{T}_{t}\mathrm{T}_{s}$ for all $t,s\geq 0$.
\end{proposition}

\begin{definition}\label{def-bw-feller}
   A semigroup $\{\mathrm{T}_t\}_{t \geq 0}$ is said to be \emph{$bw$-Feller} if, for every bounded sequentially weakly continuous function 
$\psi : \H \to \R$ and every $t > 0$, the function $\mathrm{T}_t \psi : \H \to \R$ is also bounded and sequentially weakly continuous. 

Equivalently, $\{\mathrm{T}_t\}_{t \geq 0}$ is $bw$-Feller if   for each $t > 0$,  whenever $\u_{0,m} \rightharpoonup \u_0$ weakly in $\H$, it holds that
\begin{align}
\mathrm{T}_t \psi(\u_{0,m}) \to \mathrm{T}_t \psi(\u_0),
\end{align}
for all bounded sequentially weakly continuous functions $\psi : \H \to \R$.
\end{definition}

Let us now recall an abstract result from the article \cite{Maslowski+Seidler_1999}, which will be used to demonstrate the existence of an invariant measure for the stochastic system \eqref{32}.

\begin{theorem}[{\cite[Proposition 3.1]{Maslowski+Seidler_1999}}]\label{thm-invaeiant-measure}
    Assume that the semigroup $\{\mathrm{T}_{t}\}_{t\geq 0}$ is $bw$-weakly Feller. Suppose that we can find a Borel probability measure $\nu$ on $\H$ and $T_0\geq 0$ such that for any $\varepsilon>0$, there exists $R>0$ satisfying
    \begin{align}
        \sup_{T\geq T_0}\frac{1}{T}\int_0^T \mathrm{T}^*_{t}\nu \big(\H\setminus \mathcal{B}_{R}\big)\d t < \varepsilon,
    \end{align}
where $\mathcal{B}_R=\{\phi\in\H : \|\phi\|_{\H}\leq R\}$. Then, there exists an invariant probability measure for the semigroup $\{\mathrm{T}_{t}\}_{t\geq 0}$.
\end{theorem}

In order to demonstrate that the semigroup $\{\mathrm{T}_t\}_{t\geq0}$ is $bw$-Feller, we require the following result:

\begin{lemma}\label{lem-solution-convergence}
Let $\u_0\in\H$, $\mathbf{F}\in \mathrm{L}^{2}(0,T;\V')$ and Hypothesis \ref{hyp} be satisfied. Assume that an $\H$-valued sequence $\{\u_{0,m}\}_{m\in\N}$ weakly converges to $\u_0$ in $\H$. 
Let $\u^m$ be the pathwise strong solution of the stochastic system \eqref{32} on $[0,\infty)$ corresponding to initial data $\u_{0,m}$. Then, for every $T>0$, there exist:
    \begin{itemize}
        \item a subsequence $\{m_k\}_{k\in\N}$,
        \item a stochastic basis $(\widehat{\Omega},\widehat{\mathscr{F}},\{\widehat{\mathscr{F}}_t\}_{t\in[0,T]},\widehat{\mathbb{P}})$,
        \item a $\mathrm{Q}$-Wiener process $\{\widehat{\mathrm{W}}(t)\}_{t\in[0,T]}$ defined on this basis,
    \item and $\{\widehat{\mathscr{F}}_t\}_{t\in[0,T]}$-progressively measurable processes $\widehat{\u}$, $\{\widehat{\u}^{m_k}\}_{k\in\N}$, $t\in[0,T]$ (defined on the same basis) with laws supported in $\mathscr{Y}$ such that 
    \begin{align}
        \widehat{\u}^{m_k} \text{ has the same law as } \u^{m_k} \text{ on } \mathscr{Y} \text{ and } \widehat{\u}^{m_k}\to \widehat{\u} \text{ in } \mathscr{Y}, \;\; \widehat{\mathbb{P}}\text{-a.s.},
    \end{align}
    and the system 
    \begin{align}
        (\widehat{\Omega},\widehat{\mathscr{F}},\{\widehat{\mathscr{F}}_t\}_{t\geq 0},\widehat{\mathbb{P}}, \widehat{\W},\widehat{\u})
    \end{align}
    is a martingale solution to the stochastic system \eqref{32} on $[0,T]$ with the initial data $\u_0$ and external forcing $\mathbf{F}$.  
    \end{itemize}
\end{lemma}
\begin{proof}
In view of \cite[Lemma 27.2]{Robinson_2020}, there exists $R_1>0$  such that $\sup\limits_{m\in\N}\|\u_{0,m}\|_{\H}\leq R_1$. 
Theorem \ref{thm3.5} implies that for every stochastic basis $(\Omega,\mathscr{F},\{\mathscr{F}_t\}_{t\geq 0},\mathbb{P})$ and $\Q$-Wiener processes $\{\W(t)\}_{t\geq 0}$ defined on this stochastic basis,  there exists a unique progressively measurable process $\u^m :[0,T]\times\Omega\to\H$ with $\mathbb{P}$-a.s. paths
		\begin{align*}
\u^m(\cdot,\omega)\in\C([0,T];\H)\cap\mathrm{L}^2(0,T;\V)\cap\mathrm{L}^{r+1}(0,T;\wi\L^{r+1})
		\end{align*}
		such that for all $t \in[0,T]$  and all $\v\in\V\cap\wi\L^{r+1}$, $\mathbb{P}\text{-a.s.}$:
		\begin{align}
			 (\u^m(t),\v) 
              & =(\u_{0,m},\v)-\int_0^t\langle\mu \A\u^m(s)+\B(\u^m(s)) + \alpha \u^m(s) +\beta\mathcal{C}(\u^m(s)) - \mathbf{F}_m(s),\v\rangle\d s 
              \nonumber\\ & \quad + \int_0^t\left(\boldsymbol{\sigma}(s,\u^m(s))\d\W(s),\v\right). 
		\end{align}
Also, we have that $\u_m$, for each $m$, satisfies
\begin{align*}
		&  \E \left[   \sup_{0\leq t \leq T} \|\u^m(t)\|^{2}_{\H}+ \int_{0}^{T}\left(\mu \|\nabla\u^m(s)\|^2_{\H}+ \alpha \|\u^m(s)\|^2_{\H}+2\beta\|\u^m(s)\|^{r+1}_{\wi\L^{r+1}}\right)\d s\right]\nonumber\\&
		\leq  C\left( \|\u_{0,m}\|_{\H}, \|\mathbf{F}\|_{\mathrm{L}^{2}(0,T;\V^{\prime})}, \mu, \alpha, K ,T\right) \leq C\left( R_1, \|\mathbf{F}\|_{\mathrm{L}^{2}(0,T;\V^{\prime})}, \mu, \alpha , K ,T\right) .
	\end{align*}
    Following the same lines as in the proof of Lemma \ref{lem4.2}, we obtain that the family of measures $\{\mathscr{L}(\u^m):n\in\N\}$ is tight on $(\mathscr{Y},\mathcal{T})$.  The remainder of the proof proceeds exactly as in Theorem \ref{thm3.4}, making use of Theorem \ref{thm-JvST}, Corollary \ref{cor-convergence-B+C}, and the martingale representation theorem \cite[Section 8.2]{DaZ}.
\end{proof}

In the following result, we demonstrate that the semigroup $\{\mathrm{T}_t\}_{t\geq 0}$ is $bw$-Feller.
\begin{proposition}\label{prop-bw-feller}
   The semigroup $\{\mathrm{T}_t\}_{t\geq 0}$ is $bw$-Feller.
\end{proposition}
\begin{proof}
    Fix $t > 0$, $\u_0 \in \H$, and let $\{\u_{0,m}\}_{m \in \N}$ be a sequence in $\H$ such that $\u_{0,m} \rightharpoonup \u_0$ weakly in $\H$. 
Let $\psi : \H \to \R$ be a bounded sequentially weakly continuous function, and choose an auxiliary time $T \in (t, \infty)$. Since the function $\mathrm{T}_{t}\psi : \H \to \R$ is clearly bounded, it remains to show that it is sequentially weakly continuous. 

Let $\u^m(\cdot):=\u(\cdot,\u_{0,m})$ and $\u(\cdot):=\u(\cdot,\u_{0})$ be two strong solutions of the stochastic system \eqref{32} on $[0,\infty)$ with initial data $\u_{0,m}$ and $\u_0$, respectively. Let us suppose that these processes are defined on the stochastic basis $(\Omega,\mathscr{F},\{\mathscr{F}_t\}_{t\geq 0},\mathbb{P})$. By Lemma \ref{lem-solution-convergence}, for every $T>0$, there exist:
    \begin{itemize}
        \item a subsequence $\{m_k\}_{k\in\N}$,
        \item a stochastic basis $(\widehat{\Omega},\widehat{\mathscr{F}},\{\widehat{\mathscr{F}}_t\}_{t\in[0,T]},\widehat{\mathbb{P}})$,
        \item a $\mathrm{Q}$-Wiener process $\{\widehat{\mathrm{W}}(t)\}_{t\in[0,T]}$ defined on this basis,
    \item and $\{\widehat{\mathscr{F}}_t\}_{t\in[0,T]}$-progressively measurable processes $\widehat{\u}$, $\{\widehat{\u}^{m_k}\}_{k\in\N}$, $t\in[0,T]$ (defined on the same basis) with laws supported in $\mathscr{Y}$ such that 
    \begin{align}\label{eqn-conver-Y}
        \widehat{\u}^{m_k} \text{ has the same law as } \u^{m_k} \text{ on } \mathscr{Y} \text{ and } \widehat{\u}^{m_k}\to \widehat{\u} \text{ in } \mathscr{Y}, \;\; \widehat{\mathbb{P}}\text{-a.s.}
    \end{align}
    and the system 
    \begin{align}
        (\widehat{\Omega},\widehat{\mathscr{F}},\{\widehat{\mathscr{F}}_t\}_{t\geq 0},\widehat{\mathbb{P}}, \widehat{\W},\widehat{\u})
    \end{align}
    is a martingale solution to the stochastic system \eqref{32} on $[0,T]$ with the initial data $\u_0$ and external forcing $\mathbf{F}$.  
    \end{itemize}
    From \eqref{eqn-conver-Y}, we infer that for all $t\in[0,T]$
    \begin{align*}
        \widehat{\u}^{m_k}(t) \to \widehat{\u}(t) \;\; \text{ weakly in } \H, \;\; \; \widehat{\mathbb{P}}\text{-a.s.}
    \end{align*}
    Since $\psi:\H\to\R$ is sequentially weakly continuous, we have  for all $t\in[0,T]$
    \begin{align*}
        \psi(\widehat{\u}^{m_k}(t)) \to \psi(\widehat{\u}(t))\;\; \text{ in } \R, \;\; \; \widehat{\mathbb{P}}\text{-a.s.}
    \end{align*}
    Consequently, the boundedness of $\psi:\H\to\R$ allows us to invoke the Lebesgue dominated convergence theorem, from which we deduce
    \begin{align}\label{eqn-634}
    \lim_{k\to\infty}\widehat{\E}\left[\psi(\widehat{\u}^{m_k}(t))\right] = \widehat{\E}\left[\psi(\widehat{\u}(t))\right],  \;\; \text{ for all } \; t\in[0,T].
    \end{align}
    Since $\widehat{\u}^{m_k}$ and $\u^{m_k}$ have the same law on $\mathscr{Y}$, we achieve
    \begin{align}\label{eqn-635}
        \widehat{\E}\left[\psi(\widehat{\u}^{m_k}(t))\right]= {\E}\left[\psi({\u}^{m_k}(t))\right] = \mathrm{T}_t\psi(\u_{0,m_k}),  \;\; \text{ for all } \; t\in[0,T].
    \end{align}

Since, by assumption, $(\Omega, \mathscr{F}, \{\mathscr{F}_t\}_{t \ge 0}, \mathbb{P}, \W, \u)$ is a martingale solution of the stochastic system \eqref{32} with the initial data $\u_0$, and $(\widehat{\Omega}, \widehat{\mathscr{F}}, \{\widehat{\mathscr{F}}_t\}_{t \ge 0}, \widehat{\mathbb{P}}, \widehat{\W}, \widehat{\u})$ is likewise a martingale solution of the same system with the same initial data $\u_0$, then, by the uniqueness in law of martingale solutions to the stochastic system \eqref{32} (Theorem \ref{thm3.5}), we conclude that the processes $\u$ and $\widehat{\u}$ share the same law on the space $\mathscr{Y}$. Therefore,
\begin{align}\label{eqn-636}
    \widehat{\E}\left[\psi(\widehat{\u}(t))\right]= {\E}\left[\psi({\u}(t))\right] = \mathrm{T}_t\psi(\u_{0}).
\end{align}
Now, combining \eqref{eqn-634}, \eqref{eqn-635} and \eqref{eqn-636}, we find 
\begin{align*}
    \lim_{k\to\infty}\mathrm{T}_t\psi(\u_{0,m_k})=\mathrm{T}_t\psi(\u_{0}).
\end{align*}
By a contradiction argument (since the martingale solutions are unique in law), we conclude that the whole sequence $\{\mathrm{T}_t\psi(\u_{0,m})\}_{m\in\N}$ is convergent and we have 
\begin{align*}
\lim_{m\to\infty}\mathrm{T}_t\psi(\u_{0,m})=\mathrm{T}_t\psi(\u_{0}), \;\; \text{ for all } \; t\in[0,T].
\end{align*}
Hence the proof is completed.
\end{proof}

\begin{lemma}\label{lem-suffi-cond}
    Assume that $\u_0\in\H$, $\mathbf{F}\in \mathrm{L}^{2}(0,T;\V')$, $\alpha>\frac{L}{2}$ and  $\{\u(t)\}_{t\geq0}$ be the unique strong solution to the stochastic system \eqref{32} corresponding to the initial data $\u_0$ and external forcing $\mathbf{F}$. Then, we can find $T_0\geq 0$ such that for every $\varepsilon>0$ there exists $R>0$ satisfying 
    \begin{align}
        \sup_{T\geq T_0} \frac{1}{T} \int_0^T (\mathrm{T}^*_t\delta_{\u_0})(\H\setminus \mathcal{B}_R) \d t \leq \varepsilon,
    \end{align}
    where $\mathcal{B}_R=\{\phi\in\H : \|\phi\|_{\H}\leq R\}$.
\end{lemma}

\begin{proof}
Let us first recall that we assume $\alpha>\frac{L}{2}$.    From the It\^o formula satisfied by the solution of the stochastic system \eqref{32}, we have, $\mathbb{P}$-a.s.
    \begin{align}
	&	\|{\u}(t)\|_{\H}^2+2\mu \int_0^t\|\nabla{\u}(s)\|_{\H}^2\d s +2\alpha \int_0^t\|{\u}(s)\|_{\H}^2\d s +2\beta\int_0^t\|{\u}(s)\|_{\widetilde{\L}^{r+1}}^{r+1}\d s\nonumber\\& = \|{\u_0}\|_{\H}^2 + \int_0^t \left\langle  \mathbf{F}(s),{\u}(s) \right\rangle \d s + 2 \int_0^{t}(\boldsymbol{\sigma}(s,{\u}(s))\d {\W}(s), {\u}(s))+  \int_0^{t}\|\boldsymbol{\sigma}(s,{\u}(s))\|_{\mathcal{L}_{\Q}}^2\d s.
\end{align}
Since $\int_0^{t}(\boldsymbol{\sigma}(s,{\u}(s))\d {\W}(s), {\u}(s))$ is a martingale, after taking expectation, we get
\begin{align}
	&	\E \left[\|{\u}(t)\|_{\H}^2\right]+2\mu \E \left[\int_0^t\|\nabla{\u}(s)\|_{\H}^2\d s\right] +2\alpha \E \left[\int_0^t\|{\u}(s)\|_{\H}^2\d s\right] +2\beta\E \left[\int_0^t\|{\u}(s)\|_{\widetilde{\L}^{r+1}}^{r+1}\d s\right] \nonumber\\& = \|{\u_0}\|_{\H}^2 + \E \left[\int_0^t \left\langle  \mathbf{F}(s),{\u}(s) \right\rangle \d s \right] + \E \left[ \int_0^{t}\|\boldsymbol{\sigma}(s,{\u}(s))\|_{\mathcal{L}_{\Q}}^2\d s\right],
\end{align}
for all $t\in[0,T]$. Using H\"older's and Young's inequalities, and Hypothesis \ref{hyp}, we obtain
\begin{align*}
    &	\E \left[\|{\u}(t)\|_{\H}^2\right]+2\mu \E \left[\int_0^t\|\nabla{\u}(s)\|_{\H}^2\d s\right] +2\alpha \E \left[\int_0^t\|{\u}(s)\|_{\H}^2\d s\right] +2\beta\E \left[\int_0^t\|{\u}(s)\|_{\widetilde{\L}^{r+1}}^{r+1}\d s\right] \nonumber\\& \leq  \|{\u_0}\|_{\H}^2 +  \frac{\|\mathbf{F}\|^2_{\mathrm{L}^2(0,T;\V^{\prime})}}{4\min\left\{\mu,\frac12\left(\alpha-\frac{L}{2}\right)\right\}} + \min\left\{\mu,\frac12\left(\alpha-\frac{L}{2}\right)\right\} \E \left[\int_0^t\|{\u}(s)\|_{\V}^2\d s\right] 
    \nonumber\\ & \quad +  \E \left[ \int_0^{t}\|\boldsymbol{\sigma}(s,{\u}(s))-\boldsymbol{\sigma}(s,\boldsymbol{0})\|_{\mathcal{L}_{\Q}}^2\d s\right] +  \E \left[ \int_0^{t}\|\boldsymbol{\sigma}(s,\boldsymbol{0})\|_{\mathcal{L}_{\Q}}^2\d s\right] 
    \nonumber\\ & \quad + 2 \E \left[ \int_0^{t}\|\boldsymbol{\sigma}(s,{\u}(s))-\boldsymbol{\sigma}(s,\boldsymbol{0})\|_{\mathcal{L}_{\Q}}\|\boldsymbol{\sigma}(s,\boldsymbol{0})\|_{\mathcal{L}_{\Q}}\d s\right] 
    \nonumber\\& \leq  \|{\u_0}\|_{\H}^2 +  \frac{\|\mathbf{F}\|^2_{\mathrm{L}^2(0,T;\V^{\prime})}}{4\min\left\{\mu,\frac12\left(\alpha-\frac{L}{2}\right)\right\}} + \min\left\{\mu,\frac12\left(\alpha-\frac{L}{2}\right)\right\} \E \left[\int_0^t\|{\u}(s)\|_{\V}^2\d s\right] 
    \nonumber\\ & \quad + L \E \left[ \int_0^{t}\|{\u}(s)\|_{\H}^2\d s\right] + Kt 
    + 2\sqrt{K}\sqrt{L} \;  \E \left[ \int_0^{t}\|{\u}(s)\|_{\H}\d s\right] 
    \nonumber\\& \leq  \|{\u_0}\|_{\H}^2 +  \frac{\|\mathbf{F}\|^2_{\mathrm{L}^2(0,T;\V^{\prime})}}{4\min\left\{\mu,\frac12(\alpha-\frac{L}{2})\right\}} + \mu \E \left[\int_0^t\|\nabla{\u}(s)\|_{\H}^2\d s\right]  + \left(\alpha+\frac{L}{2}\right) \E \left[\int_0^t\|{\u}(s)\|_{\H}^2\d s\right] 
    \nonumber\\ & \quad + \left\{1 + \frac{2L}{\alpha-\frac{L}{2}}\right\}Kt,
\end{align*}
for all $t\in[0,T]$, which implies
\begin{align}\label{eqn-641}
    &	 \frac{1}{T}\E \left[\int_0^T\|{\u}(s)\|_{\H}^2\d s\right]   \leq \frac{1}{\left(\alpha-\frac{L}{2}\right)}\left[\frac{1}{T} \|{\u_0}\|_{\H}^2 +  \frac{\|\mathbf{F}\|^2_{\mathrm{L}^2(0,T;\V^{\prime})}}{4T\min\left\{\mu,\left(\alpha-\frac{L}{2}\right)\right\}} + \left\{1 + \frac{L}{\alpha-\frac{L}{2}}\right\}K\right].
\end{align}
Next, making use of the Chebyshev inequality along with inequality \eqref{eqn-641}, we reach at
\begin{align}\label{eqn-642}
     \frac{1}{T} \int_0^T (\mathrm{T}^*_t\delta_{\u_0})(\H\setminus \mathcal{B}_R) \d t 
     &=  \frac{1}{T} \int_0^T \mathbb{P}\big(\{\|\u(t)\|_{\H}>R\}\big) \d t
     \nonumber \\ & \leq \frac{1}{TR^2}\E \left[\int_0^T\|{\u}(t)\|_{\H}^2\d t\right]
     \nonumber\\ & \leq \frac{1}{\left(\alpha-\frac{L}{2}\right)R^2}\left[\frac{1}{T} \|{\u_0}\|_{\H}^2 +  \frac{\|\mathbf{F}\|^2_{\mathrm{L}^2(0,T;\V^{\prime})}}{4T\min\left\{\mu,\left(\alpha-\frac{L}{2}\right)\right\}} + \left\{1 + \frac{L}{\alpha-\frac{L}{2}}\right\}K\right].
\end{align}
Hence, from \eqref{eqn-642}, we immediately conclude the proof.
\end{proof}

        Now, we are ready to provide the first main result of this section, that is, the existence of an invariant probability measure associated with the stochastic system \eqref{32}.
        \begin{theorem}\label{EIM}
           Assume that $\mathbf{F}\in \mathrm{L}^{2}(0,T;\V')$ and  Hypothesis \ref{hyp} is satisfied. Then, for all the cases given in Table \ref{Table-3alpha}, there exists an invariant probability measure of the semigroup $\{\mathrm{T}_{t}\}_{t\geq 0}$ defined in \eqref{eqn-semigroup}, that is, there exists a probability measure $\nu$ on $\H$ such that
            \begin{align*}
                \mathrm{T}^*_t \nu =\nu, \ \text{ for all }\ t\in[0,T].
            \end{align*}
        \end{theorem}
        \begin{proof}
            In view of Proposition \ref{prop-bw-feller}, Lemma \ref{lem-suffi-cond} and Theorem \ref{thm-invaeiant-measure}, we complete the proof.
        \end{proof}
		
\subsection{Existence of at most one invariant measure} 
In this subsection, for all the cases given in Table \ref{Table-4}, we discuss that there exists at most one invariant probability measure (uniqueness) of the semigroup $\{\mathrm{T}_{t}\}_{t\geq 0}$ defined in \eqref{eqn-semigroup}.

The following theorem helps us to provide the corresponding exponential stability results for the stochastic system \eqref{32}.

\begin{theorem}\label{exps1}
For $d\in\{2,3\}$, let $\u(\cdot)$ and $\v(\cdot)$ be two strong solutions of the stochastic system \eqref{32} with the initial data $\u_0,\v_0\in\H$, respectively. Then, for all the cases given in Table \ref{Table-4}, we have 
\begin{align}
\E\left[\|\u(t)-\v(t)\|_{\H}^2\right] & \leq \|\u_0-\v_0\|_{\H}^2e^{-(2\alpha-(2\hat{\zeta}+L))t}, && \text{ for } r\in(3,\infty),\label{513} \\
\E\left[\|\u(t)-\v(t)\|_{\H}^2\right]  & \leq \|\u_0-\v_0\|_{\H}^2e^{-(2\alpha-L)t}, && \text{ for } r=3 \text{ with } 2\beta\mu\geq 1,\label{5133}\\
\E\left[\|\u(t)-\v(t)\|_{\H}^2\right]  & \leq \|\u_0-\v_0\|_{\H}^2e^{-\left(2\alpha - \frac{1}{2\mu}-L\right)t}, && \text{ for } r\in(3,\infty) \text{ with } 2\beta\mu\geq 1,\label{51333}
\end{align}
where $\hat{\zeta}$ is defined in \eqref{eqn-zeta}. 
\end{theorem}
\begin{proof}
	Let us define $\w(\cdot)=\u(\cdot)-\v(\cdot)$. Then, $\w(\cdot)$ satisfies the following energy equality: 
	\begin{align}\label{514}
	\|\w(t)\|_{\H}^2&=\|\w_0\|_{\H}^2-2\mu\int_0^t\|\nabla\w(s)\|_{\H}^2\d s-2\alpha\int_0^t\|\w(s)\|_{\H}^2\d s
	\nonumber\\ & \quad -2\beta\int_0^t\langle
	\mathcal{C}(\u(s))-\mathcal{C}(\v(s)),\w(s)\rangle\d s -2\int_0^t\left<\B(\u(s))-\B(\v(s)),\w(s)\right>\d s
	\nonumber\\&\quad +\int_0^{t}\|\widetilde{\boldsymbol{\sigma}}(s)\|_{\mathcal{L}_{\Q}}^2\d s +2\int_0^{t}(\widetilde{\boldsymbol{\sigma}}(s)\d\W(s),\w(s)),
	\end{align}
	where $\widetilde{\boldsymbol{\sigma}}(\cdot)=\boldsymbol{\sigma}(\cdot,\u(\cdot))-\boldsymbol{\sigma}(\cdot,\v(\cdot))$.
    \vskip 0.2 cm
	\noindent\textbf{Case (1):} {$d=2,3$ and $r\in(3,\infty)$.} Firstly,  taking expectation in \eqref{514} and then using Hypothesis \ref{hyp} (H.3), \eqref{5p21} and \eqref{5p2121}, one can easily see that 
	\begin{align}\label{515}
	\E\left[	\|\w(t)\|_{\H}^2\right] + 2\mu \E \left[\int_0^t \|\nabla\w(s)\|_{\H}^2\d s \right]\leq\|\w_0\|_{\H}^2-(2\alpha-(2\hat{\zeta}+L))\int_0^t\E\left[\|\w(s)\|_{\H}^2\right]\d s,
	\end{align}
	where $\hat{\zeta}$ is defined in \eqref{eqn-zeta}. 	Thus, an application of  Gr\"onwall's inequality yields 
	\begin{align}
	\E\left[	\|\w(t)\|_{\H}^2\right]\leq \|\w_0\|_{\H}^2e^{-(2\alpha -(2\hat{\zeta}+L))t},
	\end{align}
	and for $ 2\alpha > 2\hat{\zeta}+L$, we obtain the required result \eqref{513}.

Secondly, as in \cite{YZ}, for $r> 3$, one can estimate $|\langle\B(\u-\v,\u-\v),\v\rangle|$ as (see \cite{YZ} and \cite[Remark 2.7]{Gautam+Mohan_2025})
	\begin{align}\label{2.26}
&	|\langle\B(\u-\v,\u-\v),\v\rangle|
 \leq \mu \|\nabla(\u-\v)\|_{\H}^2+\frac{1}{4\mu } \||\v|^{\frac{r-1}{2}}(\u-\v)\|_{\H}^2 + \frac{1}{4\mu }\|\u-\v\|_{\H}^2.
	\end{align}    
    Taking expectation in \eqref{514} and then using Hypothesis \ref{hyp} (H.3), \eqref{5p2121} and \eqref{2.26}, one can find that for $2\beta\mu\geq 1$
	\begin{align}\label{515}
	\E\left[	\|\w(t)\|_{\H}^2\right]  \leq\|\w_0\|_{\H}^2-\bigg(2\alpha- \frac{1}{2\mu} - L\bigg)\int_0^t\E\left[\|\w(s)\|_{\H}^2\right]\d s,
	\end{align}
    Thus, an application of  Gr\"onwall's inequality yields 
	\begin{align}
	\E\left[	\|\w(t)\|_{\H}^2\right]\leq \|\w_0\|_{\H}^2e^{-\left(2\alpha -\frac{1}{2\mu} - L\right))t},
	\end{align}
	and for $ 2\alpha > \frac{1}{2\mu} + L$,  we obtain the required result \eqref{51333}. 
    
    \vskip 0.2 cm
	\noindent\textbf{Case (2):} {$d=2,3$ and $r=3$ with $2\beta\mu\geq 1$.} Taking expectation in \eqref{514} and then using Hypothesis \ref{hyp} (H.3), \eqref{5p2121} and \eqref{5p25}, we find that for some $0<\theta\leq 1$
	\begin{align}\label{5151}
	\E\left[	\|\w(t)\|_{\H}^2\right] + 2\mu(1-\theta) \E \left[\int_0^t \|\nabla\w(s)\|_{\H}^2\d s \right]\leq\|\w_0\|_{\H}^2-(2\alpha-L)\int_0^t\E\left[\|\w(s)\|_{\H}^2\right]\d s.
	\end{align}
	Thus, an application of Gr\"onwall's inequality yields 
	\begin{align}
	\E\left[	\|\w(t)\|_{\H}^2\right]\leq \|\w_0\|_{\H}^2e^{-(2\alpha - L))t},
	\end{align}
	and for $ 2\alpha > L$,  we obtain the required result \eqref{5133}. Hence the proof is completed. 
\end{proof}

\begin{remark}
    It is worth mentioning here that for Poincar\'e domains (which may be bounded or unbounded) with Poincar\'e constant $\lambda_1$ (that is, $\|\u\|_{\H}^2\leq \frac{1}{\lambda_1}\|\nabla\u\|_{\H}^2$ for $\u\in\H_0^1(\mathcal{O})$), one can obtain 
    \begin{align*}
        \E\left[\|\u(t)-\v(t)\|_{\H}^2\right] & \leq \|\u_0-\v_0\|_{\H}^2e^{-(2\alpha+\mu\lambda_1-(2\hat{\zeta}+L))t}, \;\;\; \text{ for } \;r\in(3,\infty),
    \end{align*}
    which ensures the exponential stability for $ 2\alpha + \mu\lambda_1 > 2\hat{\zeta}+L$.
\end{remark}

We now provide the uniqueness of the invariant probability measures of the semigroup $\{\mathrm{T}_{t}\}_{t\geq 0}$ obtained in Theorem \ref{EIM}. The strategy of the proof follows the same steps as in \cite{KK+MTM-SCBF} (see \cite[Theorem 6.7]{KK+MTM-SCBF}).

\begin{theorem}\label{UEIM}
Let the conditions given in Theorem \ref{exps1} hold true and $\u_0\in\H$ be given. Then, for all the cases given in Table \ref{Table-4}, there is a unique invariant measure $\upnu$ to the stochastic system \eqref{32}. The measure $\upnu$ is ergodic and strongly mixing, that is, 
\begin{align}\label{6.9a}
\lim_{t\to\infty}\mathrm{T}_t\varphi(\u_0)=\int_{\H}\varphi(\v_0)\d\upnu(\v_0), \ \upnu\text{-a.s., for all }\ \u_0\in\H\ \text{ and }\  \varphi\in\C_b(\H).
\end{align} 
\end{theorem}
\begin{proof}
See the proof of \cite[Theorem 6.7]{KK+MTM-SCBF}.
\end{proof}

\begin{remark}
   We note that, in the case of additive white noise with a trace-class operator, one has $L=0$ in Hypothesis \ref{hyp}. Consequently, Theorem \ref{UEIM} applies to all scenarios listed in Table \ref{Table-4} with $L=0$. In addition, from Theorem \ref{EIM}, we recover the result obtained in \cite[Theorem 5.3]{KKMTM-DCDSB}.
\end{remark}

		\medskip\noindent
		{\bf Acknowledgments:}  This work is funded by national funds through the FCT - Fundação para a Ciência e a Tecnologia, I.P., under the scope of the projects {UID/297/2025} and {UID/PRR/297/2025} (Center for Mathematics and Applications - NOVA Math). M. T. Mohan would  like to thank the Department of Science and Technology (DST) Science $\&$ Engineering Research Board (SERB), India for a MATRICS grant (MTR/2021/000066). A substantial part of this work was completed during K. Kinra’s visit to the Indian Institute of Technology Roorkee, India, whose gracious hospitality greatly assisted him in carrying out this research.

	\medskip\noindent	{\bf Data availability:} 	Data sharing not applicable to this article as no datasets were generated or analysed during the current study.

	\medskip\noindent
	\textbf{Declarations}: During the preparation of this work, the authors have not used AI tools.
	
	\medskip\noindent
	\textbf{Author Contributions}: All authors contributed equally.
	
	\medskip\noindent
	\textbf{Conflict of interest:} The author declares no conflict of interest.

	\end{document}